\newtheorem{theorem}{Theorem}[section]
\newtheorem{lemma}[theorem]{Lemma}
\newtheorem{prop}[theorem]{Proposition}
\newtheorem{crlr}[theorem]{Corollary}
\theoremstyle{definition}
\newtheorem{rem}[theorem]{Remark}
\newcommand{\mf}[1]{\mathfrak{#1}}
\newcommand{\mc}[1]{\mathcal{#1}}
\numberwithin{equation}{section}
\title{The weak commutativity construction for Lie algebras}
 \author{Luis Augusto de Mendon\c ca}
 \address{Department of Mathematics, University of Campinas (UNICAMP), rua S\'{e}rgio Buarque de Holanda, 651, 13083-859, Campinas-SP, Brazil.}
 \email{luismendonca@ime.unicamp.br}
 \keywords{Lie Algebras, homological finiteness properties, finite presentability, Schur multiplier}
 \subjclass[2010]{17B55, 20F05, 20J05}
\begin{document}

 \begin{abstract}
 We study the analogue of Sidki's weak commutativity construction, defined originally for groups, in the category of Lie algebras. This is the
 quotient $\chi(\mf{g})$ of the Lie algebra freely generated by two isomorphic copies $\mf{g}$ and $\mf{g}^{\psi}$ of a fixed Lie algebra by the ideal generated by
 the brackets $[x,x^{\psi}]$, for all $x$. We exhibit an abelian ideal of 
 $\chi(\mf{g})$ whose associated quotient is a subdirect sum in $\mf{g} \oplus \mf{g} \oplus \mf{g}$ and we give conditions for this ideal to be
 finite dimensional. We show that $\chi(\mf{g})$ has a subquotient that is isomorphic to the Schur multiplier of $\mf{g}$. We prove that $\chi(\mf{g})$
 is finitely presentable or of homological type $FP_2$ if and only if $\mf{g}$ has the same property, but $\chi(\mf{f})$ is not of type $FP_3$ if $\mf{f}$ is a 
 non-abelian free Lie algebra.
 \end{abstract}

\maketitle

\begin{section}{Introduction}
 The weak commutativity construction was first defined for groups by Sidki \cite{Sidki} and goes as follows: for a group $G$, we define $\chi(G)$
 as the quotient of the free product $G \ast G^{\psi}$ of two isomorphic copies of $G$ by the normal subgroup generated by the elements $[g,g^{\psi}]$
 for all $g \in G$. We think of this as a functor that receives the group $G$ and returns the group with weak commutativity $\chi(G)$.
 
 In a series of papers, many group theoretic properties were shown to be preserved by this functor. For instance, it preserves finiteness and
 solvability \cite{Sidki} and finite presentability \cite{BridsonKoch}. Moreover, if $G$ is finitely generated nilpotent, polycyclic-by-finite or
 solvable of type $FP_{\infty}$, then
 $\chi(G)$ has the same property \cite{GuRoSi, LimaOliveira, KochSidki}.
 
 The group $\chi(G)$ has a chain of normal subgroups with some nice properties, which allows some of the proofs of the results cited above to
 be carried on. We write this series as $R(G) \subseteq W(G) \subseteq L(G)$ (or $R \subseteq W \subseteq L$ if
 $G$ is understood) and we observe the following: $W$ is always abelian and  
 $\chi(G)/W$ is isomorphic to a subdirect product living inside $G \times G \times G$; the subquotient $W/R$ is isomorphic to the Schur multiplier of $G$ 
 \cite{Rocco}; and $\chi(G)$ is a split extension of $L$ by $G$. 
 
 We consider in this paper an analogue of this construction in the category of Lie algebras over a field. We fix once and for all a field $K$
 with $char(K) \neq 2$, and we only consider Lie algebras over $K$.
 For any Lie algebra $\mf{g}$, let $\mf{g}^{\psi}$ be an isomorphic copy, with isomorphism written as $x \mapsto x^{\psi}$. We define
 \[\chi(\mf{g}) = \langle \mf{g}, \mf{g}^{\psi} \hbox{ } | \hbox{ } [x,x^{\psi}]=0 \hbox{ for all } x \in \mf{g} \rangle.\]
 We show that $\chi(\mf{g})$ has a chain of ideals
 \[ R(\mf{g}) \subseteq W(\mf{g}) \subseteq L(\mf{g}) \subseteq \chi(\mf{g})\]
 satisfying the analogous properties as the chain of normal subgroups in the group case. Again, we write only $R$, $W$ and $L$ if there is no risk of confusion. So $W$ is an abelian ideal,
 $\chi(\mf{g})/W$ is a subdirect sum living inside $\mf{g} \oplus \mf{g} \oplus \mf{g}$, the subquotient 
 $W/R$ is isomorphic to $H_2(\mf{g}; K)$ and $\chi(\mf{g})$ is a split extension of $L$ by $\mf{g}$.
 
 We denote by $\mc{U}(\mf{g})$ the universal enveloping algebra of $\mf{g}$. Recall that $\mf{g}$ is of homological type $FP_m$ if the trivial 
 $\mc{U}(\mf{g})$-module $K$ admits a projective resolution
 \[\mathcal{P}: \ldots \to P_n \to P_{n-1} \to \ldots \to P_1 \to P_0 \to K \to 0\]
 with $P_j$ finitely generated for all $j \leq m$. We say that $\mf{g}$ is of type $FP_{\infty}$ if it is of type $FP_m$ for all $m \geq 0$. A
 Lie algebra is finitely generated if and only if it is of type $FP_1$, and it is of type $FP_2$ if it is finitely presentable.  
  
 There are few results about finite presentability of Lie algebras. For metabelian and center-by-metabelian Lie algebras, some methods developed
 by Bryant and Groves in \cite{BG1, BG2, BG3} give a nice picture. Less is known about homological finiteness properties, but Groves and Kochloukova 
 showed in \cite{GrovesKoch} that solvable Lie algebras of type $FP_{\infty}$ are finite dimensional. This is inspired by a result by 
 Kropholler that says that solvable groups of type $FP_{\infty}$ are constructible \cite{Kropholler}.

 Our first result relies on some work of Kochloukova and Martínez-Pérez on subdirect sums of Lie algebras \cite{KochMart}.
 \begin{theorem} \label{T1}
  If $\mf{g}$ is of type $FP_2$ or finitely presentable, then $\chi(\mf{g})/W$ has the same property.
 \end{theorem}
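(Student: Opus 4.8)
The plan is to realise $\chi(\mf{g})/W$ as a subdirect sum of three copies of $\mf{g}$ which surjects onto each of the three subsums $\mf{g}\oplus\mf{g}$ obtained by forgetting a coordinate, and whose intersection with each coordinate summand has finite codimension, and then to feed this into the results of Kochloukova and Martínez-Pérez on subdirect sums of Lie algebras \cite{KochMart} (the Lie-algebra analogue of the group-theoretic $1$--$2$--$3$ theorem).

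Concretely, one checks (as in the construction of $W$) that the subdirect embedding $\chi(\mf{g})/W\hookrightarrow\mf{g}\oplus\mf{g}\oplus\mf{g}$ is induced by the three surjections $\chi(\mf{g})\to\mf{g}$ defined on generators by $x\mapsto x,\ x^\psi\mapsto 0$, by $x\mapsto 0,\ x^\psi\mapsto x$, and by $x\mapsto x,\ x^\psi\mapsto x$ (each kills every bracket $[x,x^\psi]$, so is well defined, and $W$ is their common kernel). Call $D$ the resulting subdirect sum. Inspecting where the generating subalgebras $\mf{g}$ and $\mf{g}^\psi$ are sent makes it immediate that $D$ surjects onto each of the three subsums $\mf{g}\oplus\mf{g}$. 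To control the intersection of $D$ with a coordinate summand, I would first note that the images of the brackets $[x-x^\psi,y]$, $[x-x^\psi,y^\psi]$ and $[x,y^\psi]$ (for $x,y\in\mf{g}$) show that $D\supseteq[\mf{g},\mf{g}]\oplus[\mf{g},\mf{g}]\oplus[\mf{g},\mf{g}]$; and since the abelianization of $\chi(\mf{g})$ is $\mf{g}/[\mf{g},\mf{g}]\oplus\mf{g}/[\mf{g},\mf{g}]$, detected by the first two of the three surjections, one gets that the intersection of $D$ with each coordinate summand is exactly $[\mf{g},\mf{g}]$. Hence the quotient of each coordinate summand by its intersection with $D$ is $\mf{g}/[\mf{g},\mf{g}]$, which is finite dimensional because $\mf{g}$ --- being of type $FP_2$, or finitely presentable --- is finitely generated; in particular these quotients are of type $FP_\infty$.

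With this in place the conclusion follows from the subdirect-sum theorem of \cite{KochMart}: a subdirect sum of finitely many copies of a Lie algebra that is of type $FP_2$ (resp.\ finitely presentable), which surjects onto all of the subsums obtained by forgetting a coordinate and in which the quotient of each coordinate summand by its intersection with the subdirect sum is of type $FP_\infty$, is itself of type $FP_2$ (resp.\ finitely presentable); apply this to $D\cong\chi(\mf{g})/W$. I expect the only delicate point to be matching the situation above with the precise hypotheses of the theorem invoked from \cite{KochMart}: for a subdirect sum of three factors one normally needs the relevant quotients to be of type $FP_3$ in order to deduce type $FP_2$, and it is exactly the finite-dimensionality of the quotients $\mf{g}/[\mf{g},\mf{g}]$ that lets the argument run assuming only that $\mf{g}$ is of type $FP_2$. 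The rest --- identifying the common kernel of the three maps with $W$, the surjectivity onto the pairwise subsums, and the computation of the intersections with the coordinate summands --- is routine.
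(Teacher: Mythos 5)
This is essentially the paper's argument: both identify $\chi(\mf{g})/W$ with the subdirect sum $\mathrm{Im}(\rho)\subseteq\mf{g}\oplus\mf{g}\oplus\mf{g}$ cut out by the three retractions you list, verify surjection onto each pair of coordinates by exhibiting explicit preimages, and then invoke Corollaries D1 and F1 of \cite{KochMart}. The only divergence is in which auxiliary hypothesis is checked: the corollaries as used in the paper require that $\mathrm{Im}(\rho)$ meet each coordinate summand non-trivially, which your (correct) computation $\mathrm{Im}(\rho)\cap\mf{g}_i=\mf{g}'$ supplies whenever $\mf{g}$ is non-abelian, so the paper must dispose of the abelian case separately (there $\mathrm{Im}(\rho)$ is finitely generated abelian, hence finite dimensional); your formulation via the quotients $\mf{g}_i/(\mathrm{Im}(\rho)\cap\mf{g}_i)\simeq\mf{g}/\mf{g}'$ being of type $FP_\infty$ would cover both cases uniformly, but only if the result of \cite{KochMart} is actually stated with that hypothesis, which is the one point you would still need to confirm against the source.
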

 
 Again, apart from the aforementioned paper, little is known about finiteness properties of subdirect sums of Lie algebras. In the group-theoretic
 case there are several results. For finite presentability and related homotopical finiteness properties of subdirect products of groups, see 
 \cite{BR, BHMS1, BHMS2, BHMS3, Mart}
 to cite a few. Some homological counterparts of the results in these papers were treated in \cite{Kuckuck} and \cite{Francismar}.
 
 Theorem \ref{T1}, together with the exactness of $W \rightarrowtail \chi(\mf{g}) \twoheadrightarrow \chi(\mf{g})/W$, could be used to deduce
 finiteness properties for $\chi(\mf{g})$ when $W$ is finite dimensional. In the same spirit of Theorem B 
 in \cite{KochSidki}, we give a sufficient condition for that. 
 
 \begin{theorem}  \label{T2}
If $\mf{g}$ is of type $FP_2$ and $\mf{g}'/\mf{g}''$  is finite dimensional, then $W(\mf{g})$ is finite dimensional.
 \end{theorem}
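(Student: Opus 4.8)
The plan is to reduce the statement, through the short exact sequence $R \rightarrowtail W \twoheadrightarrow H_2(\mf{g}; K)$, to the assertion that $R = R(\mf{g})$ is finite dimensional. Since $\mf{g}$ is of type $FP_2$, the space $H_2(\mf{g}; K)$ is finite dimensional, so $W/R$ is finite dimensional and only $R$ is left to bound. At the same time $\mf{g}$ is of type $FP_1$, hence finitely generated, so that $\mf{g}/\mf{g}'$ is finite dimensional; combined with the hypothesis on $\mf{g}'/\mf{g}''$ this makes $\mf{g}/\mf{g}''$ a finite dimensional metabelian Lie algebra, with $\mf{g}$ an extension of it by $\mf{g}''$.

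Two ingredients feed into the bound on $R$. The first is module-theoretic: $\chi(\mf{g})$ is finitely generated, being generated by the two copies of the finitely generated algebra $\mf{g}$, and $\chi(\mf{g})/W$ is of type $FP_2$ by Theorem \ref{T1}; hence, by the Lie algebra analogue of the usual extension argument, $W$ is finitely generated as a module over $\mc{U}(\chi(\mf{g})/W)$, i.e. over the enveloping algebra of the subdirect sum inside $\mf{g} \oplus \mf{g} \oplus \mf{g}$. The second is explicit: $R$ is spanned by iterated brackets involving elements of both copies $\mf{g}$ and $\mf{g}^{\psi}$, and in $\chi(\mf{g})$ one has the antisymmetry relation $[x, y^{\psi}] = -[y, x^{\psi}]$, obtained by expanding $[x + y, (x + y)^{\psi}] = 0$ and using $char(K) \neq 2$; together with the Jacobi identity and the defining relations of $\mf{g}$ and $\mf{g}^{\psi}$ this forces many such brackets to collapse. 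The plan is to organize these collapses into a collection procedure: first straighten the occurrences of $\psi$ so that $W$, with its ideal structure, is controlled by brackets of a bounded shape; then show that, modulo $R$, all but boundedly many entries of such a bracket can be taken in $\mf{g}$ and the remaining ``inner'' entries are determined modulo $\mf{g}''$. This should produce a surjection onto $R$ from a finite dimensional space assembled out of $\mf{g}/\mf{g}'$, $\mf{g}'/\mf{g}''$ and $H_2(\mf{g}; K)$. An essentially equivalent route is to pass to $\overline{\mf{g}} = \mf{g}/\mf{g}''$, check that $W(\overline{\mf{g}})$ is finite dimensional by first treating the finite dimensional case, and then bound the kernel of the induced surjection $R(\mf{g}) \twoheadrightarrow R(\overline{\mf{g}})$, using that $W/R \cong H_2(\mf{g}; K)$ already absorbs the contribution of $\mf{g}''$.

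The main obstacle is the termination of the collection: one must show that every bracket having two or more entries in $\mf{g}'$, or an entry in $\mf{g}''$, can be rewritten modulo $R$ as a combination of strictly simpler brackets, so that finitely many spanning elements of $R$ genuinely suffice. This is exactly the point at which $\dim \mf{g}'/\mf{g}'' < \infty$ (and $\dim \mf{g}/\mf{g}' < \infty$) is used; without it $W(\mf{g})$ need not be finite dimensional, for instance when $\mf{g}$ is a non-abelian free Lie algebra. On the module side the same difficulty reappears in a different guise: finite generation of $W$ over $\mc{U}(\chi(\mf{g})/W)$ does not in itself give finite dimensionality, and one must additionally establish that the action is nilpotent --- a power of the augmentation ideal annihilating $W$ --- which again rests on the metabelian quotient $\mf{g}/\mf{g}''$ being finite dimensional. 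The two places where a direct computation looks unavoidable are the base case $\dim \mf{g} < \infty$ and the control of the kernel of $R(\mf{g}) \to R(\mf{g}/\mf{g}'')$.
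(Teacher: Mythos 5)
Your reduction of the theorem to the finite dimensionality of $R(\mf{g})$ is legitimate: by Theorem \ref{T4} one has $W/R \simeq H_2(\mf{g};K)$, which is finite dimensional when $\mf{g}$ is of type $FP_2$, so it would indeed suffice to bound $R$. But from that point on your text is a plan, not a proof, and the plan has its hardest steps left open by your own admission. The ``collection procedure'' that is supposed to rewrite every bracket with two entries in $\mf{g}'$ or an entry in $\mf{g}''$ as a combination of boundedly many spanning elements is never exhibited, and its termination is exactly where the hypothesis $\dim \mf{g}'/\mf{g}'' < \infty$ must enter; asserting that it ``should produce a surjection onto $R$ from a finite dimensional space'' is not an argument. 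The alternative route through $\overline{\mf{g}} = \mf{g}/\mf{g}''$ has the same problem in a different place: the kernel of $\chi(\mf{g}) \to \chi(\overline{\mf{g}})$ is the ideal generated by $\mf{g}''$ and $(\mf{g}'')^{\psi}$, which is large, and nothing you say controls its intersection with $R(\mf{g})$; the remark that $W/R \cong H_2(\mf{g};K)$ ``absorbs the contribution of $\mf{g}''$'' does not address the part of that kernel lying inside $R$. Likewise, on the module side, finite generation of $W$ over $\mc{U}(\chi(\mf{g})/W)$ (or over $\mc{U}(\mf{g}/\mf{g}')$, using centrality of $W$ in $L+D$) is correctly identified as insufficient, and the needed nilpotency of the action is again only postulated.

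For comparison, the paper's proof bypasses $R$ entirely. It uses the central extension $W \rightarrowtail L \twoheadrightarrow \rho(L)$ and its five-term exact sequence
\[ H_2(L;K) \to H_2(\rho(L);K) \to W \to H_1(L;K) \to H_1(\rho(L);K) \to 0, \]
which reduces the theorem to two facts: (a) $H_2(\rho(L);K)$ is finite dimensional, proved by identifying $\rho(L)$ with the subalgebra $S = \langle (x,-x) \rangle \subseteq \mf{g}\oplus\mf{g}$, observing $\mf{g}'\oplus\mf{g}' \subseteq S$ with $S/(\mf{g}'\oplus\mf{g}') \simeq \mf{g}/\mf{g}'$, and running the Lyndon--Hochschild--Serre spectral sequence together with K\"unneth and the noetherianity of $\mc{U}(\mf{g}/\mf{g}')$ (this is where both hypotheses are used); and (b) $L/L'$ is finite dimensional whenever $\mf{g}$ is finitely generated, proved by identifying $L/L'$ with $Aug(\mc{U}(\mf{g}))/I$, $I$ the ideal generated by the squares $x^2$, in which every monomial collapses to a multilinear one in the generators. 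If you want to pursue your route, you would need to actually carry out the combinatorial collapse of $R$ or the kernel computation for $R(\mf{g}) \to R(\overline{\mf{g}})$; as written, the proposal does not establish the theorem.
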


  In \cite{KochSidki} the authors show that the group-theoretic weak commutativity construction preserves the property of being solvable of type
  $FP_{\infty}$. The analogous result also holds for Lie algebras, but for a much simpler reason. By Theorem 1 in \cite{GrovesKoch}, if $\mf{g}$ 
  is solvable of type $FP_{\infty}$, then it is finite dimensional. In this case of course $\mf{g}'/\mf{g}''$ is also finite dimensional, and then
  so is $W$, by the theorem above. Moreover, $\chi(\mf{g})/W$ is clearly finite dimensional and solvable, being a Lie subalgebra of 
  $\mf{g} \oplus \mf{g} \oplus \mf{g}$. Thus we have the following corollary.
 
 \begin{crlr}
  If $\mf{g}$ is solvable of type $FP_{\infty}$, then so is $\chi(\mf{g})$.
 \end{crlr}

 The same reasoning above can be used if we assume at first that $\mf{g}$ is finite dimensional.
 
 \begin{crlr}
 If $\mf{g}$ is finite dimensional, then so is $\chi(\mf{g})$.
 \end{crlr}

 The condition $\mf{g}'/\mf{g}''$ \textit{is finite dimensional} is strong and does not apply, for instance, to free non-abelian Lie algebras. We 
 do not have, however, any example of a finitely presentable Lie algebra $\mf{g}$ such that $W(\mf{g})$ is not finite dimensional. The search
 for such an example is complicated because we do not have a method to determine that an element of $W(\mf{g})$ is non-zero. 
 
 The following theorem shows that we actually do not need the restriction that $\mf{g}'/\mf{g}''$ is finite dimensional to study finite presentability.
 
 \begin{theorem}  \label{T3}
 Let $\mf{g}$ be a Lie algebra. Then $\mf{g}$ is finitely presentable (resp. of type $FP_2$) if and only if $\chi(\mf{g})$ is finitely 
 presentable (resp. of type $FP_2$). If $\mf{f}$ is a free non-abelian Lie algebra, then $\chi(\mf{f})$ is not of type $FP_3$.
 \end{theorem}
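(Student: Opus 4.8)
The plan is to prove the three assertions separately, using throughout the ideal chain $R\subseteq W\subseteq L\subseteq\chi(\mf{g})$ and the facts recalled above ($W$ abelian, $\chi(\mf{g})/W$ subdirect in $\mf{g}\oplus\mf{g}\oplus\mf{g}$, $W/R\cong H_2(\mf{g};K)$, $\chi(\mf{g})=L\rtimes\mf{g}$). For the implication ``$\chi(\mf{g})$ finitely presentable (resp.\ of type $FP_2$) $\Rightarrow\mf{g}$ has the same property'', observe first that $\mf{g}$ is a retract of $\chi(\mf{g})$: the fold map $\pi\colon\chi(\mf{g})\to\mf{g}$, $x\mapsto x$, $x^{\psi}\mapsto x$, is split by the inclusion $\iota$ of the first copy. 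Write $\sigma=\iota\pi$. If $Y$ is a finite set of generators of $\chi(\mf{g})$ (and, in the finitely presented case, finitely many defining relations are chosen), then $\ker\pi$ is generated \emph{as an ideal of} $\chi(\mf{g})$ by the finite set $\{\,y-\sigma(y):y\in Y\,\}$, because modulo the ideal it generates every generator is identified with its $\sigma$-image, so the quotient is $\mf{g}$; since finite presentability and type $FP_2$ pass to quotients by finitely generated ideals, $\mf{g}$ inherits the property (retracts inherit these finiteness properties).

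For the converse, assume $\mf{g}$ is finitely presentable (resp.\ $FP_2$). By Theorem~\ref{T1}, $Q=\chi(\mf{g})/W$ has the same property. As $\mf{g}$ is of type $FP_2$, $W/R\cong H_2(\mf{g};K)$ is finite dimensional, hence an abelian Lie algebra of type $FP_{\infty}$, and applying the extension theorem to $W/R\rightarrowtail\chi(\mf{g})/R\twoheadrightarrow Q$ shows $\chi(\mf{g})/R$ is finitely presentable (resp.\ $FP_2$). The remaining step, climbing the abelian ideal $R$ in $R\rightarrowtail\chi(\mf{g})\twoheadrightarrow\chi(\mf{g})/R$, cannot rest on a generic principle (``$Q$ of type $FP_2$ plus kernel finitely generated as a $\mathcal{U}(Q)$-module'' does not imply $\chi(\mf{g})$ of type $FP_2$), so one uses the explicit description of $R$ from the earlier sections: $R$ is finitely generated as an ideal of $\chi(\mf{g})$ (by finitely many brackets formed from a finite generating set of $\mf{g}$), and adjoining these finitely many generators, together with finitely many relations, to a presentation (resp.\ finite partial free resolution) of $\chi(\mf{g})/R$ produces one for $\chi(\mf{g})$. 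This is the technical heart of the positive direction.

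Finally, let $\mf{f}$ be free non-abelian of rank $d\geq2$. By the first part $\chi(\mf{f})$ is finitely presentable, so $H_i(\chi(\mf{f});K)$ is finite dimensional for $i\leq2$; since type $FP_3$ would force $H_3(\chi(\mf{f});K)$ finite dimensional, it suffices to prove $\dim_K H_3(\chi(\mf{f});K)=\infty$. Here $H_2(\mf{f};K)=0$, so $W(\mf{f})=R(\mf{f})$ and the chain is $W(\mf{f})\subseteq L(\mf{f})\subseteq\chi(\mf{f})$. The quotient $L(\mf{f})/W(\mf{f})$ is a full subdirect sum of two free non-abelian Lie algebras whose two coordinate kernels are not finitely generated as Lie algebras, so by the Lie-algebra analogue of the Stallings--Bieri examples (\cite{KochMart}) it is finitely generated but not of type $FP_2$, whence $\dim_K H_2(L(\mf{f})/W;K)=\infty$. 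In the Hochschild--Serre spectral sequence of $W(\mf{f})\rightarrowtail L(\mf{f})\twoheadrightarrow L(\mf{f})/W$ the term $E^2_{2,0}=H_2(L(\mf{f})/W;K)$ receives no differential, and the only differential leaving it is $d_2\colon E^2_{2,0}\to E^2_{0,1}=W(\mf{f})/[L(\mf{f}),W(\mf{f})]$; the target is finite dimensional (a quotient of $W(\mf{f})/R(\mf{f})\cong H_2(\mf{f};K)=0$, by the structural description of $R$), so $E^{\infty}_{2,0}$, hence $H_2(L(\mf{f});K)$, is infinite dimensional. Now use the split extension $L(\mf{f})\rightarrowtail\chi(\mf{f})\twoheadrightarrow\mf{f}$: as $\mf{f}$ is free the spectral sequence has only the columns $p=0,1$ and collapses to short exact sequences $H_n(L(\mf{f});K)_{\mf{f}}\rightarrowtail H_n(\chi(\mf{f});K)\twoheadrightarrow H_1(\mf{f};H_{n-1}(L(\mf{f});K))$. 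For $n=2$ this realizes $H_2(L(\mf{f});K)_{\mf{f}}$ as a subspace of the finite-dimensional $H_2(\chi(\mf{f});K)$, hence finite dimensional; combined with $d\geq2$, the fact that $H_2(L(\mf{f});K)$ is infinite dimensional, and the identity $(d-1)\dim_K H_2(L(\mf{f});K)=\dim_K H_1(\mf{f};H_2(L(\mf{f});K))-\dim_K H_2(L(\mf{f});K)_{\mf{f}}$ coming from the length-one resolution of $K$ over $\mathcal{U}(\mf{f})$, this forces $H_1(\mf{f};H_2(L(\mf{f});K))$ to be infinite dimensional. For $n=3$ this group is a quotient of $H_3(\chi(\mf{f});K)$, which is therefore infinite dimensional, so $\chi(\mf{f})$ is not of type $FP_3$.

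\textbf{Main obstacle.} The real work is concentrated in two spots: the climb past $R$ in the positive direction, which genuinely uses the shape of $R$ rather than a formal extension lemma; and, in the last part, the verification that $d_2$ has finite-dimensional image, i.e.\ that $W(\mf{f})/[L(\mf{f}),W(\mf{f})]$ is controlled by $W/R\cong H_2(\mf{f};K)$. Both hinge on knowing the abelian term $R$ (equivalently $W$) of the chain well enough — exactly the kind of information about $W$ that, as remarked after Theorem~\ref{T2}, is hard to obtain — so the argument must be organised so that only this minimal structural input about $R$ is required and no explicit computation inside $W$ is needed.
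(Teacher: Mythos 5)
Your treatment of the backward implications (retracts) and your appeal to Theorem~\ref{T1} for $\chi(\mf{g})/W$ match the paper, but both places you yourself flag as ``the technical heart'' are genuine gaps, and the second rests on a false claim. In the positive direction you propose to climb from $\chi(\mf{g})/R$ up to $\chi(\mf{g})$ by ``adjoining finitely many generators and relations'' to a presentation of the quotient, after correctly observing that no generic extension principle applies; but then no argument is supplied, and none of this shape can work as stated: an extension of a finitely presented Lie algebra by an abelian ideal that is merely finitely generated \emph{as an ideal} need not be finitely presented (this is exactly the metabelian phenomenon). The paper is organised the other way around: it first proves $\chi(\mf{f})$ finitely presented for $\mf{f}$ free of finite rank by exhibiting an explicit finitely presented algebra $\Gamma$ surjecting onto $\chi(\mf{f})$ whose kernel lies in a finitely generated module over $\mc{U}(\mf{f}/\mf{f}')$; noetherianness of that enveloping algebra forces the kernel to be a finitely generated ideal. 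The general case then follows because $\chi(\mf{g})$ is the quotient of $\chi(\mf{f})$ by the finitely generated ideal $\langle\langle r, r^{\psi} : r \in R\rangle\rangle$, and the $FP_2$ case uses Lemma~\ref{fpFP2} with a perfect kernel. One never needs to pass upward through $R$.

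In the $FP_3$ part, your key step is that $d_2\colon E^2_{2,0}=H_2(S;K)\to E^2_{0,1}$ has finite-dimensional image because $E^2_{0,1}=W/[L,W]$ is ``a quotient of $W/R\cong H_2(\mf{f};K)=0$''. This is false: since $W\subseteq D$ and $[D,L]=0$, the action of $L$ on $W$ is trivial, so $E^2_{0,1}=W$ itself, and your claim would force $W=R\subseteq [L,W]=0$, whereas $R(\mf{f})\neq 0$ once the rank is at least $3$. Worse, whether $W(\mf{f})$ is finite dimensional is precisely the open question highlighted after Theorem~\ref{T2}, so it cannot be used as an input; with $W$ possibly infinite dimensional you cannot conclude that $E^{\infty}_{2,0}=\ker d_2$, hence $H_2(L;K)$, is infinite dimensional. (The inference ``$S$ is not of type $FP_2$, hence $H_2(S;K)$ is infinite dimensional'' is also invalid in general and requires the direct argument of Lemma~\ref{H2Sinfdim}.) The paper avoids all of this by arguing by contradiction in the opposite direction: assuming $\chi(\mf{f})$ is $FP_3$, the collapsed spectral sequence of $L\rightarrowtail\chi(\mf{f})\twoheadrightarrow\mf{f}$ together with Lemma~\ref{lemma3.1KM} (applied with $c=[x,y]$, which acts trivially on $L$ because $[x,y^{\psi}]\in D$ and $[D,L]=0$) forces $H_2(L;K)$ to be finite dimensional; then $E^2_{1,1}=S/S'\otimes_K W$ being a subquotient of $H_2(L;K)$ forces $W$ to be finite dimensional; and the five-term sequence of $W\rightarrowtail L\twoheadrightarrow S$ then forces $H_2(S;K)$ to be finite dimensional, contradicting Lemma~\ref{H2Sinfdim}.
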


In proof of the theorem we implicitly use the construction of HNN-extensions for Lie algebras, as defined by Wasserman in \cite{Wasserman}.

 Finally, we have the analogous result on the Schur multiplier as in the group case.
 
 \begin{theorem} \label{T4}
 For all $\mf{g}$ we have $W(\mf{g})/R(\mf{g}) \simeq H_2(\mf{g}; K)$ as vector spaces over $K$.
 \end{theorem}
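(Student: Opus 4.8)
The plan is to deduce the isomorphism from Hopf's formula together with the right-exactness of the functor $\chi$. Fix a free presentation $0\to\mf{r}\to\mf{f}\xrightarrow{\pi}\mf{g}\to 0$, so that $H_2(\mf{g};K)\cong(\mf{r}\cap[\mf{f},\mf{f}])/[\mf{f},\mf{r}]$; I will also use the equivalent Chevalley--Eilenberg description $H_2(\mf{g};K)\cong\ker\partial_2/\operatorname{im}\partial_3$, where $\partial_2(x\wedge y)=[x,y]$ and $\partial_3(x\wedge y\wedge z)=[x,y]\wedge z-[x,z]\wedge y+[y,z]\wedge x$ on the exterior powers of $\mf{g}$. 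Denote by $\pi_1,\pi_2,\pi_3\colon\chi(\mf{g})\to\mf{g}$ the three projections realizing $\chi(\mf{g})/W$ as a subdirect sum in $\mf{g}\oplus\mf{g}\oplus\mf{g}$ ($\pi_1$ killing $\mf{g}^\psi$, $\pi_2$ killing $\mf{g}$, $\pi_3$ the diagonal map), so that $W=\ker\pi_1\cap\ker\pi_2\cap\ker\pi_3$; note in particular that $W$ is contained in the ideal generated by the brackets $[x,y^\psi]$, since that ideal equals $\ker\pi_1\cap\ker\pi_2$.

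First I would construct a linear map $\Phi\colon H_2(\mf{g};K)\to W/R$. The bilinear assignment $(x,y)\mapsto[x,y^\psi]$ from $\mf{g}\times\mf{g}$ to $\chi(\mf{g})$ is alternating, since $[x,x^\psi]=0$, so it factors through $\Lambda^2\mf{g}$; and if $\xi=\sum_i x_i\wedge y_i$ lies in $\ker\partial_2$ then $\sum_i[x_i,y_i^\psi]$ lies in $W$, because $\pi_1$ and $\pi_2$ annihilate every $[x,y^\psi]$ while $\pi_3\big(\sum_i[x_i,y_i^\psi]\big)=\sum_i[x_i,y_i]=\partial_2(\xi)=0$. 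Thus $\xi\mapsto\sum_i[x_i,y_i^\psi]+R$ is a well-defined map $\ker\partial_2\to W/R$, and to see that it descends to $H_2(\mf{g};K)$ I must check that the ``Jacobiator''
\[
 S(x,y,z)=[[x,y],z^\psi]-[[x,z],y^\psi]+[[y,z],x^\psi]
\]
lies in $R$ for all $x,y,z\in\mf{g}$; this is a short computation: expand each outer bracket by the Jacobi identity and simplify using the polarized relation $[x,y^\psi]=-[y,x^\psi]$, after which one recognizes the result as an element of $R$ from its definition.

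For surjectivity of $\Phi$, the point is that modulo $R$ the ideal $W$ is spanned by the classes of the brackets $[x,y^\psi]$: by the previous paragraph $W$ lies in the ideal generated by those brackets, so it is spanned by them together with iterated brackets $[[x,y^\psi],z]$ and $[[x,y^\psi],z^\psi]$, each of which the Jacobi identity rewrites as a combination of brackets $[x',y'^\psi]$ plus an element of $R$. And any $\sum_i[x_i,y_i^\psi]$ lying in $W$ has $\sum_i x_i\wedge y_i\in\ker\partial_2$, because $\pi_3\big(\sum_i[x_i,y_i^\psi]\big)=\sum_i[x_i,y_i]$, so it is in the image of $\Phi$.

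The main obstacle is injectivity: one must show that $R$ is no larger than needed, i.e. that if $\sum_i[x_i,y_i^\psi]\in R$ with $\sum_i x_i\wedge y_i\in\ker\partial_2$, then $\sum_i x_i\wedge y_i\in\operatorname{im}\partial_3$. Here I would use that $\chi(\mf{g})\cong\chi(\mf{f})/\mf{q}$, where $\mf{q}$ is the ideal of $\chi(\mf{f})$ generated by $\mf{r}\cup\mf{r}^\psi$, and that the series $R\subseteq W\subseteq L$ is natural — with the important caveat that $R$, being generated by explicit elements, satisfies $R(\mf{g})=p(R(\mf{f}))$ for the projection $p\colon\chi(\mf{f})\to\chi(\mf{g})$, whereas $W$, an intersection of kernels, is \emph{not} so compatible. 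Comparing $W(\mf{f})$ with the preimage object $\widetilde W=\{a\in\chi(\mf{f}):\pi_i(a)\in\mf{r},\ i=1,2,3\}$, whose image under $p$ is $W(\mf{g})$, gives a vector-space isomorphism $W(\mf{g})/R(\mf{g})\cong\widetilde W/(R(\mf{f})+\mf{q})$; then, using the splitting $\chi(\mf{f})=\mf{f}\ltimes L(\mf{f})$ and checking directly that $W(\mf{f})=R(\mf{f})$ for free $\mf{f}$ (consistently with $H_2(\mf{f};K)=0$), one keeps track of the $\mf{f}$- and $L(\mf{f})$-components to identify this quotient with $(\mf{r}\cap[\mf{f},\mf{f}])/[\mf{f},\mf{r}]$. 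I expect the technical heart to be precisely this last identification, where the failure of naturality of $W$ under passing to quotients has to be traded for the commutator $[\mf{f},\mf{r}]$ appearing in Hopf's formula.
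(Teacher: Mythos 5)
Your outline is correct and reaches the theorem by a route that is recognizably parallel to, but genuinely different from, the paper's. For the surjection $H_2(\mf{g};K)\to W/R$ the paper uses Ellis's \emph{non-abelian} exterior square $\mf{g}\wedge\mf{g}$ and its defining relations, whereas you use the Chevalley--Eilenberg description $\ker\partial_2/\operatorname{im}\partial_3$ on the ordinary exterior powers; the verifications amount to the same computations, and your key claim does hold: in fact $S(x,y,z)=2\,[x,[y-y^{\psi},z^{\psi}]]\in R$ exactly. (A word of caution here: the ``obvious'' reduction, replacing $[u,[v,w^{\psi}]]$ by $[u,[v,w]^{\psi}]$ modulo $R$ and then flipping $\psi$'s, only yields $3S\equiv 0\pmod{R}$, which is vacuous in characteristic $3$; one genuinely needs the identity $[z,[x,y^{\psi}]]=[y,[x,z^{\psi}]]-[x,[y,z]^{\psi}]$, obtained from $[D,L]=0$ and the Jacobi identity.) For the reverse direction the paper proves a general lemma about stem extensions and then manufactures one with kernel $H_2(\mf{g};K)$ by choosing a vector-space complement inside $\mf{n}/[\mf{f},\mf{n}]$, finishing by checking that a composite map is the identity --- a check it cannot skip, since mutually surjective infinite-dimensional spaces need not be isomorphic. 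You instead compute $W(\mf{g})/R(\mf{g})\cong\widetilde W/(R(\mf{f})+\mf{q})$ directly from the free presentation, which avoids both the choice of complement and that final check; the price is the identification you explicitly defer, and that identification is indeed the technical heart. It does go through: $W(\mf{f})=R(\mf{f})$ is not an independent verification but follows from your first two paragraphs applied to $\mf{f}$ together with $\ker\partial_2=\operatorname{im}\partial_3$ for free Lie algebras; this gives $\widetilde W/(R(\mf{f})+\mf{q})\cong\rho(\widetilde W)/\rho(\mf{q})$, and the ideal $\rho(\mf{q})$ of $Im(\rho_{\mf{f}})$ generated by the elements $(r,r,0)$ and $(0,r,r)$, $r\in\mf{r}$, meets the middle copy of $\mf{f}$ exactly in $[\mf{f},\mf{r}]$ --- precisely the role played by the paper's ideal $B$. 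Two smaller points: your spanning argument for $W/R$ needs an induction on bracket length (including brackets of two elements of $D$, not only the length-three brackets you list); and once the direct identification is in place, injectivity of $\Phi$ is no longer needed for the stated vector-space isomorphism --- nor would it follow from surjectivity of $\Phi$ plus an abstract isomorphism when $H_2(\mf{g};K)$ is infinite dimensional.
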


 The group-theoretic counterpart of this is contained in \cite{Sidki} and \cite{Rocco}. To prove our version, we use the description of the Schur multiplier of Lie 
 algebras given by Ellis \cite{Ellis}. 
 
 We observe that from the theorem above we get that if $\mf{g}$ is finitely presentable, then $W(\mf{g})/R(\mf{g})$ is finite dimensional. Thus the question of whether
 there is a finitely presentable Lie algebra $\mf{g}$ such that $W(\mf{g})$ is infinite dimensional reduces to the same question 
 with respect to $R(\mf{g})$.
 
 We have analyzed $R(\mf{g})$ for some specific Lie algebras. It turns out that $R(\mf{g})$ is trivial whenever $\mf{g}$ is abelian, in 
 contrast with the case of groups: $R(G) \neq 1$ when $G$ is any elementary abelian $2$-group of order at least $8$ (\cite{LimaOliveira}, 
 Proposition 4.5). We also showed that $R(\mf{g})$ is zero if $\mf{g}$ is perfect or $2$-generated. 
 
 In any of these cases (that is, when $R(\mf{g})=0$) and for $m\geq 2$, we have that $\chi(\mf{g})$ is of type $FP_m$ if and only if $\chi(\mf{g})/W(\mf{g})$ is of type $FP_m$. This is because under each of these hypotheses $\mf{g}$ is of type $FP_2$ (being a retract of both $\chi(\mf{g})$ and $\chi(\mf{g})/W(\mf{g})$), thus 
 the Schur multiplier $H_2(\mf{g};K)$ is finite dimensional. This is especially interesting because $\chi(\mf{g})/W(\mf{g})$ has a more concrete description 
 as a subdirect sum living inside $\mf{g} \oplus \mf{g} \oplus \mf{g}$.  
 
 We used GAP \cite{GAP4} to obtain examples of Lie algebras over $\mathbb{Q}$ with $R \neq 0$. For this we considered $3$-generated nilpotent
 Lie algebras. From that also follows that $R(\mf{f}) \neq 0$ for $\mf{f}$ free of rank at least $3$. 
 \end{section}

\begin{section}{Preliminaries on homological properties of Lie algebras}  \label{sec2}
We review here the definitions and some standard results on Lie algebras that we will use throughout the paper.

Recall that we have fixed a field $K$ with $char(K) \neq 2$, and we only consider Lie algebras defined over this field. For 
any Lie algebra $\mf{g}$ and $S \subseteq \mf{g}$ a subset, we denote by $\langle S \rangle$ and by $\langle \langle S \rangle \rangle$
the subalgebra and the ideal, respectively, generated by $S$. 

We denote by $\mc{U}(\mf{g})$ the universal enveloping algebra of $\mf{g}$. It can be described as the quotient of the tensor algebra on the 
vector space $\mf{g}$ by the ideal generated by the elements $xy-yx-[x,y]$ for all $x,y \in \mf{g}$. The obvious map 
$i: \mf{g} \to \mc{U}(\mf{g})$ is injective by the Poincaré-Birkhoff-Witt theorem. The augmentation ideal, which we denote by 
$Aug(\mc{U}(\mf{g}))$, is the kernel of the homomorphism 
$\mc{U}(\mf{g}) \to K$ taking $i(\mf{g})$ to $0$. 

For a $\mc{U}(\mf{g})$-module $A$, the $n$-th homology of $\mf{g}$ with coefficients in $A$ is defined as
\[ H_n(\mf{g}; A) := Tor_n^{\mc{U}(\mf{g})}(A,K).\]
If $K$ is the trivial $\mc{U}(\mf{g})$-module, then $H_1(\mf{g}; K) \simeq \mf{g}/\mf{g}'$. 
The second homology can be computed by a Hopf formula analogue.
\begin{lemma}[Hopf formula]
Let $\mf{f}$ be a free Lie algebra and let $\mf{r} \subseteq \mf{f}$ be an ideal. Denote $\mf{g} = \mf{f}/\mf{r}$. Then
\[H_2(\mf{g}; K) \simeq \frac{[\mf{f},\mf{f}] \cap \mf{r}}{[\mf{f},\mf{r}]}.\]
\end{lemma}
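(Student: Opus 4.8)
The plan is to deduce the Hopf formula from the five-term exact sequence in Lie algebra homology applied to the extension $\mf{r} \rightarrowtail \mf{f} \twoheadrightarrow \mf{g}$. For any ideal $\mf{n}$ of a Lie algebra $\mf{h}$, with quotient $\mf{q} = \mf{h}/\mf{n}$, there is a natural exact sequence
\[ H_2(\mf{h}; K) \to H_2(\mf{q}; K) \to \mf{n}/[\mf{h},\mf{n}] \to H_1(\mf{h}; K) \to H_1(\mf{q}; K) \to 0 \]
of $K$-vector spaces, which is the low-degree exact sequence of the Lyndon--Hochschild--Serre spectral sequence $E^2_{pq} = H_p(\mf{q}; H_q(\mf{n}; K)) \Rightarrow H_{p+q}(\mf{h}; K)$. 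Here one uses $H_1(\mf{n};K) = \mf{n}/[\mf{n},\mf{n}]$, the identification $H_0(\mf{q}; \mf{n}/[\mf{n},\mf{n}]) = \mf{n}/[\mf{h},\mf{n}]$ (valid since $[\mf{n},\mf{n}] \subseteq [\mf{h},\mf{n}]$), and the fact that the map $\mf{n}/[\mf{h},\mf{n}] \to H_1(\mf{h};K) = \mf{h}/[\mf{h},\mf{h}]$ is the one induced by the inclusion $\mf{n} \hookrightarrow \mf{h}$.

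Next I would use freeness of $\mf{f}$ to kill the leftmost term. Since $\mf{f}$ is free, $\mc{U}(\mf{f})$ is a free associative algebra, and hence its augmentation ideal $Aug(\mc{U}(\mf{f}))$ is a free $\mc{U}(\mf{f})$-module (on the image of any free generating set of $\mf{f}$); thus $K$ has the length-one free resolution $0 \to Aug(\mc{U}(\mf{f})) \to \mc{U}(\mf{f}) \to K \to 0$, and therefore $H_n(\mf{f}; K) = 0$ for every $n \geq 2$. Taking $\mf{h} = \mf{f}$, $\mf{n} = \mf{r}$ and $\mf{q} = \mf{g}$ in the five-term sequence, the term $H_2(\mf{f};K)$ vanishes, so we obtain an isomorphism $H_2(\mf{g}; K) \simeq \ker\big( \mf{r}/[\mf{f},\mf{r}] \to \mf{f}/[\mf{f},\mf{f}] \big)$.

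It remains only to identify this kernel. A coset $r + [\mf{f},\mf{r}]$ maps to zero in $\mf{f}/[\mf{f},\mf{f}]$ if and only if $r \in [\mf{f},\mf{f}]$, that is, if and only if $r \in [\mf{f},\mf{f}] \cap \mf{r}$; and since $\mf{r}$ is an ideal we have $[\mf{f},\mf{r}] \subseteq [\mf{f},\mf{f}] \cap \mf{r}$, so this kernel is precisely $([\mf{f},\mf{f}] \cap \mf{r})/[\mf{f},\mf{r}]$, which is the desired description. The only nontrivial input is the five-term exact sequence together with the correct identification of its middle term $\mf{n}/[\mf{h},\mf{n}]$; this is classical and could be quoted from a standard reference on Lie algebra (co)homology, or, if a self-contained treatment is preferred, derived directly by a snake-lemma argument comparing free resolutions of $K$ over $\mc{U}(\mf{f})$ and over $\mc{U}(\mf{g})$. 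Everything else in the argument is formal.
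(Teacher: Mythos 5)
Your proof is correct. The paper states this lemma without proof, treating it as standard, but your derivation --- the five-term exact sequence of the extension $\mf{r} \rightarrowtail \mf{f} \twoheadrightarrow \mf{g}$, the identification $H_0(\mf{g}; \mf{r}/[\mf{r},\mf{r}]) \simeq \mf{r}/[\mf{f},\mf{r}]$, and the vanishing of $H_2(\mf{f};K)$ via the length-one free resolution of $K$ over $\mc{U}(\mf{f})$ --- is precisely the canonical argument, and it uses exactly the tools (the Lyndon--Hochschild--Serre five-term sequence) that the paper itself records immediately after the statement.
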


The following general facts about cohomology of Lie algebras can be found in \cite{Weibel}, Chapter 7. For a short exact sequence 
$\mf{h} \rightarrowtail \mf{g} \twoheadrightarrow \mf{q}$ of Lie algebras and a $\mc{U}(\mf{g})$-module $A$, there is an associated Lyndon-Hochschild-Serre spectral sequence
\[ E_{p,q}^2= H_p(\mf{q}; H_q(\mf{h}; A)) \Rightarrow H_{p+q}(\mf{g}; A),\]
which is convergent and concentrated in the first quadrant. The differential $d^r$ of the $r$-th page has bidegree $(-r,r-1)$. If $A$ is a trivial $\mc{U}(\mf{g})$-module,
then the associated $5$-term exact sequence is written as
\[H_2(\mf{g}; A) \to H_2(\mf{q}; A) \to H_0(\mf{q}; H_1(\mf{h}; A)) \to H_1(\mf{g}; A) \to H_1(\mf{q};A) \to 0.\]
There is also Künneth formula to compute the homology of direct sums:
\[H_n( \mf{g} \oplus \mf{h}; K) \simeq \oplus_{0 \leq i \leq n} H_i(\mf{g}; K) \otimes_K H_{n-i}(\mf{h}; K).\]

A $\mc{U}(\mf{g})$-module $A$ is said to be of type $FP_m$ if it admits a projective resolution
\[\mathcal{P}: \ldots \to P_n \to P_{n-1} \to \ldots \to P_1 \to P_0 \to A \to 0\]
with $P_j$ finitely generated for all $j \leq m$. Part of the relevance of modules of type $FP_m$ is explained by the following lemma.

\begin{lemma}
 Let $\mf{g}$ be a Lie algebra and let $A$ be a $\mc{U}(\mf{g})$-module. If $A$ is of type $FP_m$, then $H_j(\mf{g};A)$ is finite dimensional
 for all $j \leq m$.
\end{lemma}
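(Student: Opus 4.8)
The plan is to argue directly from the definition of $H_j(\mf{g};A)=Tor_j^{\mc{U}(\mf{g})}(A,K)$ using a projective resolution of finite type. First I would invoke the hypothesis to fix a projective resolution
\[\mathcal{P}: \ldots \to P_n \to P_{n-1} \to \ldots \to P_1 \to P_0 \to A \to 0\]
of $A$ over $\mc{U}(\mf{g})$ with $P_j$ finitely generated for all $j \leq m$. By definition, $H_j(\mf{g};A)$ is the $j$-th homology of the complex obtained by deleting $A$ from $\mathcal{P}$ and applying $-\otimes_{\mc{U}(\mf{g})} K$. Hence $H_j(\mf{g};A)$ is a subquotient of $P_j\otimes_{\mc{U}(\mf{g})} K$, and it suffices to show that this $K$-vector space is finite dimensional whenever $j\leq m$.

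Second, I would reduce from projective to free modules. A finitely generated projective $\mc{U}(\mf{g})$-module $P_j$ is a direct summand of a finitely generated free module $\mc{U}(\mf{g})^{n_j}$ for some $n_j\in\mathbb{N}$. Applying the (additive, right-exact) functor $-\otimes_{\mc{U}(\mf{g})} K$ and using $\mc{U}(\mf{g})\otimes_{\mc{U}(\mf{g})} K\cong K$, we see that $P_j\otimes_{\mc{U}(\mf{g})} K$ is a direct summand of $\mc{U}(\mf{g})^{n_j}\otimes_{\mc{U}(\mf{g})} K\cong K^{n_j}$, hence finite dimensional over $K$. Combining this with the previous paragraph gives that $H_j(\mf{g};A)$ is finite dimensional for all $j\leq m$, as claimed.

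There is essentially no serious obstacle here; this is a routine piece of homological algebra (the Lie-algebra analogue of the corresponding statement for group rings). The only point that deserves a word of care is that "finitely generated projective" is not literally "finitely generated free", which is dealt with by the direct-summand observation above. Alternatively one could replace $\mathcal{P}$ by a resolution that is finitely generated free in degrees $<m$ and finitely generated projective in degree $m$ — this does not change $H_j(\mf{g};A)$ for $j\leq m$ — and then the spaces $P_j\otimes_{\mc{U}(\mf{g})} K$ are manifestly finite dimensional.
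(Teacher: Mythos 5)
Your proposal is correct and follows exactly the paper's argument: take a projective resolution of $A$ that is finitely generated in degrees $\leq m$, observe that $H_j(\mf{g};A)$ is a subquotient of $P_j\otimes_{\mc{U}(\mf{g})}K$, and note that the latter is finite dimensional. The only difference is that you spell out the direct-summand-of-a-free-module justification for that last finiteness claim, which the paper leaves implicit.
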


\begin{proof}
Let 
\[ \mathcal{P}: \ldots \to P_n \to P_{n-1} \to \ldots \to P_1 \to P_0 \to A \to 0\]
be a projective resolution of $A$ with $P_j$ finitely generated for all $j \leq m$. Then $H_{\ast}(\mf{g}; A)$ is the 
homology of the complex $ \mc{P}^{del} \otimes_{\mc{U}(\mf{g})} K$, where $\mc{P}^{del}$ is the deleted resolution. In particular, $H_i(\mf{g}; A)$ is
a subquotient of $P_i \otimes_{\mc{U}(\mf{g})} K$, which is finite dimensional when $P_i$ is finitely generated as a $\mc{U}(\mf{g})$-module.
\end{proof}

We say simply that $\mf{g}$ is of type $FP_m$ if $K$ is of type $FP_m$ as a trivial $\mc{U}(\mf{g})$-module. Notice that by the previous lemma
if $\mf{g}$ is of type $FP_2$, then $H_2(\mf{g};K)$ is finite dimensional.

As we have already 
mentioned, $FP_1$ is the same as finite generation, and finite presentability 
implies the property $FP_2$. For these results see the survey \cite{Survey}. We also have the following.

\begin{lemma}  \label{fpFP2}
A Lie algebra $\mf{g}$ is of type $FP_2$ if and only if there is a finitely presented Lie algebra $\mf{h}$ and an ideal 
$\mf{r} \subseteq \mf{h}$ such that $\mf{g} \simeq \mf{h}/\mf{r}$ and 
$\mf{r}=\mf{r}'$.
\end{lemma}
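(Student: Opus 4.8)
The plan is to reduce both implications to the standard description of the relation module of a finitely generated Lie algebra of type $FP_2$. Recall (this is the Lie-algebra analogue of the classical relation-module criterion for groups, and can be quoted from \cite{Survey}) that if $\mf{g} = \mf{f}/\mf{s}$ with $\mf{f}$ a finitely generated free Lie algebra, then there is an exact sequence of $\mc{U}(\mf{g})$-modules
\[ 0 \longrightarrow \mf{s}/[\mf{s},\mf{s}] \longrightarrow \mc{U}(\mf{g}) \otimes_K (\mf{f}/\mf{f}') \longrightarrow Aug(\mc{U}(\mf{g})) \longrightarrow 0, \]
in which $\mf{f}$, and hence $\mf{g}$, acts on $\mf{s}/[\mf{s},\mf{s}]$ through the adjoint action. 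Since $\mf{f}/\mf{f}'$ is finite dimensional the middle term is a finitely generated free $\mc{U}(\mf{g})$-module, so together with the augmentation $\mc{U}(\mf{g}) \to K$ this is the start of a free resolution of $K$ whose modules in degrees $0$ and $1$ are finitely generated. A Schanuel-type argument then yields the criterion I will actually use: $\mf{g}$ is of type $FP_2$ if and only if $\mf{g}$ is finitely generated and $\mf{s}/[\mf{s},\mf{s}]$ is finitely generated as a $\mc{U}(\mf{g})$-module. If a self-contained treatment is wanted, the exact sequence itself follows from the fact that $\mc{U}$ of a free Lie algebra is a free associative algebra (so that $Aug(\mc{U}(\mf{f}))$ is $\mc{U}(\mf{f})$-free on $\mf{f}/\mf{f}'$), followed by base change along $\mc{U}(\mf{f}) \to \mc{U}(\mf{g})$.

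The computational core is an elementary remark about ideals of a free Lie algebra. Let $\mf{s} \subseteq \mf{f}$ be an ideal of a free Lie algebra, put $\mf{g} = \mf{f}/\mf{s}$, let $y_1,\dots,y_m \in \mf{s}$, and write $\mf{t} = \langle \langle y_1,\dots,y_m \rangle \rangle$ for the ideal of $\mf{f}$ they generate. Using that $\mf{t}$ is the span of the iterated brackets $\mathrm{ad}(x_1)\cdots\mathrm{ad}(x_k)(y_i)$ with $k \geq 0$ and $x_1,\dots,x_k \in \mf{f}$, that $[\mf{s},\mf{s}]$ is invariant under $\mathrm{ad}(\mf{f})$, and that $\mf{s}$ acts trivially on $\mf{s}/[\mf{s},\mf{s}]$ (so the adjoint action descends to a $\mc{U}(\mf{g})$-action), one checks that the $\mc{U}(\mf{g})$-submodule of $\mf{s}/[\mf{s},\mf{s}]$ generated by the images of $y_1,\dots,y_m$ is exactly $(\mf{t} + [\mf{s},\mf{s}])/[\mf{s},\mf{s}]$. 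Hence the images of $y_1,\dots,y_m$ generate $\mf{s}/[\mf{s},\mf{s}]$ as a $\mc{U}(\mf{g})$-module if and only if $\mf{s} = \mf{t} + [\mf{s},\mf{s}]$; and when this holds, inside $\mf{f}/\mf{t}$ we get $\mf{s}/\mf{t} = ([\mf{s},\mf{s}] + \mf{t})/\mf{t} = [\mf{s}/\mf{t}, \mf{s}/\mf{t}]$, i.e.\ the ideal $\mf{s}/\mf{t}$ of $\mf{f}/\mf{t}$ equals its own derived subalgebra.

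Both implications are then short. For the forward direction, suppose $\mf{g}$ is of type $FP_2$; then it is finitely generated, so write $\mf{g} = \mf{f}/\mf{s}$ with $\mf{f}$ finitely generated free. By the criterion there are $r_1,\dots,r_n \in \mf{s}$ whose images generate $\mf{s}/[\mf{s},\mf{s}]$, hence $\mf{s} = \mf{t} + [\mf{s},\mf{s}]$ with $\mf{t} = \langle \langle r_1,\dots,r_n \rangle \rangle$. Then $\mf{h} := \mf{f}/\mf{t}$ is finitely presented (the ideal $\mf{t}$ being finitely generated by construction), $\mf{r} := \mf{s}/\mf{t}$ is an ideal with $\mf{h}/\mf{r} \simeq \mf{g}$, and $\mf{r} = \mf{r}'$ by the remark. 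For the converse, suppose $\mf{g} = \mf{h}/\mf{r}$ with $\mf{h}$ finitely presented and $\mf{r} = \mf{r}'$. Choose a finitely generated free Lie algebra $\mf{f}$ and a surjection $\mf{f} \twoheadrightarrow \mf{h}$ whose kernel $\mf{s}_0 = \langle\langle u_1,\dots,u_m\rangle\rangle$ is finitely generated as an ideal, and let $\tilde{\mf{r}} \subseteq \mf{f}$ be the preimage of $\mf{r}$, so $\mf{g} \simeq \mf{f}/\tilde{\mf{r}}$. Now $\mf{r} = \mf{r}'$ translates into $\tilde{\mf{r}} = [\tilde{\mf{r}},\tilde{\mf{r}}] + \mf{s}_0$, so by the remark (applied with $\mf{s} = \tilde{\mf{r}}$, $\mf{t} = \mf{s}_0$, $y_i = u_i$) the images of $u_1,\dots,u_m$ generate $\tilde{\mf{r}}/[\tilde{\mf{r}},\tilde{\mf{r}}]$ as a $\mc{U}(\mf{g})$-module; hence this relation module is finitely generated and $\mf{g}$ is of type $FP_2$.

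I expect the only genuinely delicate point to be the preliminary relation-module criterion, that is, identifying the kernel of $\mc{U}(\mf{g}) \otimes_K (\mf{f}/\mf{f}') \to Aug(\mc{U}(\mf{g}))$ with $\mf{s}/[\mf{s},\mf{s}]$ and running the Schanuel argument that makes being of type $FP_2$ equivalent to finite generation of that kernel; this is standard but requires some care, and one should also verify that all the $\mc{U}(\mf{g})$-module structures involved are well defined (which is precisely where the hypothesis that $\mf{s}$ is an ideal, not just a subalgebra, is used). Everything after that is bookkeeping with ideals of free Lie algebras.
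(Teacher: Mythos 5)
Your proof is correct and follows essentially the route the paper itself indicates: the paper does not write out an argument but points to the group-theoretic proof of Bridson--Kochloukova (Lemma 3.1) together with the equivalence of $FP_2$ with ``almost finite presentability'' (Bieri, Proposition 2.2), and your proposal is exactly that adaptation made explicit --- the relation-module criterion plays the role of the Bieri equivalence, and the ideal bookkeeping with $\mf{s}=\mf{t}+[\mf{s},\mf{s}]$ is the Lie-algebra version of their lemma. No gaps; the one point you flag as delicate (identifying the kernel of $\mc{U}(\mf{g})\otimes_K(\mf{f}/\mf{f}')\to Aug(\mc{U}(\mf{g}))$ with $\mf{s}/[\mf{s},\mf{s}]$) is indeed the standard input the paper also takes for granted.
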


This is well-known for groups. The proof that we can find in \cite{BridsonKoch}, Lemma 3.1, can be easily adapted to Lie algebras. Notice that 
this depends on the equivalence of the concepts of ``Lie algebra of type $FP_2$'' and ``almost finitely presented Lie algebra''. Again, the proof of Proposition 2.2 in \cite{Bieri} 
works perfectly fine when translated to Lie algebras.

It is clear that retracts of finitely presentable Lie algebras are again finitely presentable. This is also true for the homological finiteness 
properties that we consider. Again, this is well-known for groups. 

 \begin{lemma}  \label{retractsFPm}
 If $\mf{g}$ is a Lie algebra of type $FP_m$ and $\mf{b}$ is a retract of $\mf{g}$ (that is, $\mf{g} \simeq \mf{a} \rtimes \mf{b}$ 
 for some ideal $\mf{a}$), then $\mf{b}$ is also of type $FP_m$.
\end{lemma}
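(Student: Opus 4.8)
The plan is to reduce the statement to a homological reformulation of the property $FP_m$ and then to use that the retraction of $\mf{g}$ onto $\mf{b}$ persists after applying $\mathrm{Ext}$. Writing $\mf{g}\simeq\mf{a}\rtimes\mf{b}$, let $\iota\colon\mf{b}\hookrightarrow\mf{g}$ be the inclusion and $\pi\colon\mf{g}\twoheadrightarrow\mf{b}$ the projection with kernel $\mf{a}$; these are Lie algebra homomorphisms with $\pi\iota=\mathrm{id}_{\mf{b}}$, and they induce $K$-algebra homomorphisms $\mc{U}(\mf{b})\xrightarrow{\iota}\mc{U}(\mf{g})\xrightarrow{\pi}\mc{U}(\mf{b})$ whose composite is the identity and which carry augmentation ideals into one another. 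In particular, for a $\mc{U}(\mf{b})$-module $N$, viewing it as a $\mc{U}(\mf{g})$-module via $\pi$ (notation $\pi^{*}N$) one has $\iota^{*}\pi^{*}N=N$, and $\pi^{*}$ as well as the restriction functors along $\iota$ and $\pi$ send trivial modules to trivial modules.

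First I would invoke the Bieri--Eckmann criterion: for a ring $A$ and an $A$-module $M$, $M$ is of type $FP_m$ if and only if, for every direct system $(N_{\lambda})$ of $A$-modules, the canonical map $\varinjlim\mathrm{Ext}_{A}^{k}(M,N_{\lambda})\to\mathrm{Ext}_{A}^{k}(M,\varinjlim N_{\lambda})$ is an isomorphism for $k<m$ and injective for $k=m$. This is a ring-theoretic fact whose proof for group rings (as in \cite{Bieri}) carries over verbatim with $A=\mc{U}(\mf{g})$ or $A=\mc{U}(\mf{b})$. Since $\mf{g}$ has type $FP_m$, the trivial module $K$ satisfies this condition over $\mc{U}(\mf{g})$.

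Next I would produce a natural direct summand. Applying restriction of scalars to extensions yields, for each $k$ and each $\mc{U}(\mf{b})$-module $N$, natural maps $\mathrm{Ext}_{\mc{U}(\mf{b})}^{k}(K,N)\to\mathrm{Ext}_{\mc{U}(\mf{g})}^{k}(K,\pi^{*}N)$ (induced by $\pi$) and $\mathrm{Ext}_{\mc{U}(\mf{g})}^{k}(K,\pi^{*}N)\to\mathrm{Ext}_{\mc{U}(\mf{b})}^{k}(K,\iota^{*}\pi^{*}N)=\mathrm{Ext}_{\mc{U}(\mf{b})}^{k}(K,N)$ (induced by $\iota$), whose composite is induced by the identity and hence is the identity; thus $\mathrm{Ext}_{\mc{U}(\mf{b})}^{k}(K,-)$ is a natural direct summand of the functor $\mathrm{Ext}_{\mc{U}(\mf{g})}^{k}(K,\pi^{*}(-))$, cut off by a natural idempotent. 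Since restriction of scalars along $\pi$ commutes with direct limits, and since the assembly map $\varinjlim F(N_{\lambda})\to F(\varinjlim N_{\lambda})$ is natural and additive in the functor $F$, the canonical map for $\mathrm{Ext}_{\mc{U}(\mf{g})}^{k}(K,\pi^{*}(-))$ evaluated on a direct system of $\mc{U}(\mf{b})$-modules decomposes as the direct sum of the canonical map for $\mathrm{Ext}_{\mc{U}(\mf{b})}^{k}(K,-)$ and the one for its complement. A block-diagonal homomorphism is an isomorphism (respectively a monomorphism) precisely when each block is, so, combined with the conclusion of the previous paragraph, the canonical map for $\mathrm{Ext}_{\mc{U}(\mf{b})}^{k}(K,-)$ is an isomorphism for $k<m$ and injective for $k=m$. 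By the criterion this means $K$ has type $FP_m$ over $\mc{U}(\mf{b})$, i.e.\ $\mf{b}$ has type $FP_m$.

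The main point requiring care is having the Bieri--Eckmann characterisation available for Lie algebras in exactly this form; this is routine (its proof is ring-theoretic), but it should be isolated, since the remainder of the argument is then a formal diagram chase, the only delicate verification being that the summand decomposition of $\mathrm{Ext}_{\mc{U}(\mf{g})}^{k}(K,\pi^{*}(-))$ is genuinely natural in the module argument, so that it is respected by the direct-limit assembly map. It is worth noting why one is pushed to a cohomological argument: a finite-type resolution of $K$ over $\mc{U}(\mf{g})$ restricted along $\iota$ stays exact but loses finite generation, while base-changed along $\pi$ it stays of finite type but loses exactness, so neither naive construction produces a finite-type resolution over $\mc{U}(\mf{b})$ directly.
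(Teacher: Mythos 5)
Your proof is correct, and it follows the same basic strategy as the paper's: reduce $FP_m$ to a homological criterion that is functorial in the Lie algebra, and observe that the retraction $\pi\iota=\mathrm{id}_{\mf{b}}$ makes the relevant invariant of $\mf{b}$ a natural direct summand of that of $\mf{g}$. The difference lies in which criterion you instantiate. The paper uses Bieri's characterization via vanishing of homology with product coefficients --- $\mf{g}$ is $FP_m$ iff it is finitely generated and $H_j(\mf{g};\prod_{\lambda\in\Lambda}\mc{U}(\mf{g}))=0$ for $1\le j<m$ and all $\Lambda$ --- so the retract step collapses to one line ($\sigma_\ast$ is a split injection into a zero space), and finite generation of $\mf{b}$ is handled separately and trivially. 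You use instead the Bieri--Eckmann criterion that the assembly map $\varinjlim\mathrm{Ext}^k_{\mc{U}(\mf{g})}(K,N_\lambda)\to\mathrm{Ext}^k_{\mc{U}(\mf{g})}(K,\varinjlim N_\lambda)$ is an isomorphism for $k<m$ and injective for $k=m$; this subsumes finite generation in the $k=0$ case, but costs you the naturality check (which you correctly identify and carry out) that the summand decomposition of $\mathrm{Ext}^k_{\mc{U}(\mf{g})}(K,\pi^{*}(-))$ is compatible with the assembly maps, using that restriction of scalars along $\pi$ commutes with filtered colimits. Both criteria are ring-theoretic statements whose proofs in \cite{Bieri} transfer verbatim from group rings to $\mc{U}(\mf{g})$, so neither route is more delicate on that score; the paper's is marginally shorter, while yours avoids any discussion of the modules $\prod_{\lambda}\mc{U}(\mf{g})$ and makes the ``retract of functors'' mechanism fully explicit.
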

\begin{proof}
By Theorem 1.3 in \cite{Bieri}, we have that $\mf{g}$ is of type $FP_m$ if and only if it is finitely generated and $H_j(\mf{g}; \prod_{\lambda \in \Lambda} \mathcal{U}(\mf{g}))=0$ for all
$1 \leq j < m$ and for all index sets $\Lambda$. 

Clearly if $\mf{g}$ is finitely generated, then so is $\mf{b}$, so we only need to worry about the properties of the homologies. We argue
as in \cite{Bux}, Proposition 4.1. If $\pi: \mf{g} \to \mf{b}$ and $\sigma: \mf{b} \to \mf{g}$ are the split homomorphisms, then for 
any index set $\Lambda$ there are induced homomorphisms on homology 
\[\pi_{\ast}: H_j(\mf{g}; \prod_{\lambda \in \Lambda} \mathcal{U}(\mf{g})) \to H_j(\mf{b}; \prod_{\lambda \in \Lambda} \mathcal{U}(\mf{b}))\]
and
\[\sigma_{\ast}: H_j(\mf{b}; \prod_{\lambda \in \Lambda} \mathcal{U}(\mf{b})) \to H_j(\mf{g}; \prod_{\lambda \in \Lambda} \mathcal{U}(\mf{g})),\]
for all $j$. It follows by functoriality that $\pi_{\ast} \circ \sigma_{\ast} = id$. Thus $\sigma_{\ast}$ is injective, from what follows that 
$H_j(\mf{b}; \prod_{\lambda \in \Lambda} \mathcal{U}(\mf{b}))=0$ for all $ 1 \leq j < m$ when $\mf{g}$ is of type $FP_m$, so in this case
$\mf{b}$ is also of type $FP_m$.
\end{proof}
 
Another fact that we will need is that if $\mf{g}$ is finite dimensional, then $\mc{U}(\mf{g})$ is left (and right) noetherian as 
a ring (Proposition 6 of I.2.6 \cite{Bourbaki}). In particular, in this case $\mf{g}$ is of type $FP_{\infty}$.

Finally, we will make use of a technical result on finite dimensional modules over free Lie algebras. This is Lemma 3.1 in \cite{KochMart}.

\begin{lemma}  \label{lemma3.1KM}
 Let $\mf{f}$ be a free Lie algebra and let $A$ be a $\mc{U}(\mf{f})$-module. Suppose that there is some
 $c \in \mc{U}(\mf{f}) \smallsetminus \{0\}$
 such that both $H_1(\mf{f}; A)$ and $c A$ are finite dimensional. Then $A$ is finite dimensional.
\end{lemma}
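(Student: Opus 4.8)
The strategy is to reduce first to the case where $\mf{f}$ has finite rank and $A$ is finitely generated over $\mc{U}(\mf{f})$, then to replace $A$ by a two-term free presentation over $\mc{U}(\mf{f})$, and finally to extract finite-dimensionality from a Hilbert-series computation that weighs the two hypotheses against each other. For the first reduction, observe that $c$ lies in the subalgebra $\mc{U}(\mf{f}_0)\subseteq\mc{U}(\mf{f})$ generated by the finitely many free generators of $\mf{f}$ occurring in it; writing $\mf{f}=\mf{f}_0\sqcup\mf{f}_1$ as a coproduct of free Lie algebras, $\mc{U}(\mf{f})$ is the coproduct of the augmented $K$-algebras $\mc{U}(\mf{f}_0)$ and $\mc{U}(\mf{f}_1)$ over $K$, and the associated Mayer--Vietoris sequence (using that $\mathrm{Tor}^K_j$ vanishes for $j\geq 1$) yields an inclusion $H_1(\mf{f}_0;A)\hookrightarrow H_1(\mf{f};A)$. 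Hence $H_1(\mf{f}_0;A)$ is finite dimensional, $c\in\mc{U}(\mf{f}_0)$, and we may replace $\mf{f}$ by $\mf{f}_0$; from now on $\mf{f}$ is free of finite rank $n$, and I also assume $A$ is finitely generated over $\mc{U}(\mf{f})$ — the case relevant to the applications, and one that is needed in some form for the statement to hold at all.

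Now fix a surjection $\mc{U}(\mf{f})^m\twoheadrightarrow A$ with kernel $\mf{r}$. Since $\mc{U}(\mf{f})$ is a free ideal ring, $\mf{r}$ is a free module, and since $\mc{U}(\mf{f})^m$ is free we have $H_1(\mf{f};\mc{U}(\mf{f})^m)=0$; the long exact homology sequence of $\mf{r}\rightarrowtail\mc{U}(\mf{f})^m\twoheadrightarrow A$ then identifies $H_1(\mf{f};A)$ with the kernel of the natural map $\mf{r}/\mf{f}\mf{r}\to(\mc{U}(\mf{f})/\mf{f}\mc{U}(\mf{f}))^m=K^m$. Consequently $\mf{r}/\mf{f}\mf{r}$ is finite dimensional, so $\mf{r}$ is free of finite rank $k$, and $A$ fits in a free resolution
\[0\to\mc{U}(\mf{f})^k\to\mc{U}(\mf{f})^m\to A\to 0.\]

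Next I would filter everything by the natural grading on $\mc{U}(\mf{f})=K\langle x_1,\dots,x_n\rangle$. Through the surjection $\mc{U}(\mf{f})^m\twoheadrightarrow A$ the module $A$ inherits an exhaustive filtration with finite-dimensional layers, so $A$ is finite dimensional if and only if the associated graded module $\mathrm{gr}(A)$ is. The module $\mathrm{gr}(A)$ is finitely generated and graded, and again — by the free-ideal-ring property — has a length-one graded free resolution $0\to P\to\mc{U}(\mf{f})^m\to\mathrm{gr}(A)\to 0$ with $P$ free; if $d_1,d_2,\dots$ are the degrees of a homogeneous basis of $P$, then $\mathrm{gr}(A)$ has Hilbert series $\big(m-\sum_j t^{d_j}\big)/(1-nt)$, and $\mathrm{gr}(A)$ is finite dimensional exactly when this rational expression is a polynomial. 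The remaining task is to deduce that it is: the hypothesis that $cA$ is finite dimensional forces the leading term of $c$ to annihilate $\mathrm{gr}(A)$ in all but finitely many degrees, and this, combined with the constraints on the relation module $P$ imposed by $H_1(\mf{f};A)$, should pin the Hilbert series down to a polynomial.

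I expect this last step to be the main obstacle. Passing from $A$ to $\mathrm{gr}(A)$ genuinely weakens the hypothesis ``$cA$ finite dimensional'' — on $\mathrm{gr}(A)$ one sees only the action of the leading term of $c$ — and because $\mc{U}(\mf{f})$ is very far from being noetherian there is no Artin--Rees-type control of filtrations to appeal to; in particular it is not even automatic that the relation module $P$ is finitely generated. So the heart of the argument is a careful analysis, in high degrees, of how the relations of $\mathrm{gr}(A)$ interact with multiplication by the leading term of $c$, and it is precisely there that the finite-dimensionality of $H_1(\mf{f};A)$ must be used in an essential way.
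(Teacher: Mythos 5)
First, note that the paper does not prove this lemma at all: it is quoted as Lemma 3.1 of Kochloukova--Mart\'inez-P\'erez \cite{KochMart}, so there is no internal proof to measure yours against. Judged on its own terms, your proposal is not a proof but a plan: you carry out the routine reductions (finite rank via Mayer--Vietoris, a length-one free resolution $0\to\mc{U}(\mf{f})^k\to\mc{U}(\mf{f})^m\to A\to 0$ from the free-ideal-ring property, the Hilbert series of $\mathrm{gr}(A)$), and then you explicitly leave open the step where the two hypotheses are actually played against each other. That step is the entire content of the lemma, and the associated-graded strategy is structurally ill-suited to it: $\mathrm{gr}(A)$ retains only one homogeneous component of $c$ and only the leading terms of the relations, and if one instead filters by powers of $Aug(\mc{U}(\mf{f}))$, the ``divisible part'' $\bigcap_k Aug(\mc{U}(\mf{f}))^k A$ is invisible to $\mathrm{gr}(A)$ altogether. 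A correct argument must exploit non-abelianness directly, e.g.\ the fact that for distinct free generators $x_i,x_j$ both $\ker(x_i\cdot)$ and $x_iA\cap x_jA$ have dimension bounded by $\dim H_1(\mf{f};A)$; nothing in your outline plays that role.

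Second, your added hypothesis that $A$ be finitely generated over $\mc{U}(\mf{f})$ is problematic in both directions. It is too weak for the paper: in Lemma \ref{H2Lfdim} the lemma is applied to $A=H_2(L;K)$ as a $\mc{U}(\mf{f})$-module, which is not known to be finitely generated, so a finitely-generated version would not support that proof. It is also incompatible with your own first reduction, since finite generation over $\mc{U}(\mf{f})$ does not pass to the restriction to $\mc{U}(\mf{f}_0)$. Your instinct that some hypothesis is missing is nevertheless correct, but the missing hypothesis is that $\mf{f}$ be \emph{non-abelian} (as it is in both applications in the paper), not that $A$ be finitely generated: for $\mf{f}$ free of rank one, $\mc{U}(\mf{f})=K[x]$, and $A=\bigoplus_{\mathbb{N}}K[x]/(x-1)$ one has $H_1(\mf{f};A)=0$ and $(x-1)A=0$ while $A$ is infinite dimensional. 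Worse, your Mayer--Vietoris reduction can land you exactly in this bad case (take $c\in K[x_1]$), at which point the finite generation you wanted to invoke is no longer available. So the proposal has a genuine gap at its central step and, as reinforced, would not even yield a statement strong enough for the paper's purposes.
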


\end{section}

\begin{section}{The weak commutativity construction}
In this section we define and establish the first properties of $\chi(\mf{g})$ and its chain of ideals $R \subseteq W \subseteq L$. Let $\mf{g}$ be a Lie algebra and let $\mf{g}^{\psi}$ 
be an isomorphic copy. For any $x\in \mf{g}$, we denote by $x^{\psi}$ its image in 
$\mf{g}^{\psi}$. We define $\chi(\mf{g})$ by the presentation
\[ \chi(\mf{g}) = \langle \mf{g}, \mf{g}^{\psi} \hbox{ } | \hbox{ } [x,x^{\psi}] = 0 \hbox{ } \forall x \in \mf{g} \rangle.\]
This must be understood as the quotient of the free Lie sum of $\mf{g}$ and $\mf{g}^{\psi}$ by the ideal generated by the elements $[x,x^{\psi}]$,
for all $x \in \mf{g}$.

Let $L=L(\mf{g})$ be the ideal of $\chi(\mf{g})$ generated by the elements of the form $x-x^{\psi}$, for all $x \in \mf{g}$. Equivalently, $L$ is the kernel
of the homomorphism $\alpha: \chi(\mf{g}) \to \mf{g}$ defined by $\alpha(x) = \alpha(x^{\psi}) = x$ for all $x \in \mf{g}$. It is clear that this
homomorphism is split, that is, $\chi(\mf{g}) \simeq L \rtimes \mf{g}$.

\begin{lemma} \label{Lgenalg}
 $L$ is generated \textit{as a Lie algebra} by the elements $x-x^{\psi}$, for all $x \in \mf{g}$.
\end{lemma}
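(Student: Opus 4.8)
The plan is to show that the subalgebra $\mf{h} := \langle x - x^\psi : x \in \mf{g} \rangle$ is in fact an ideal of $\chi(\mf{g})$; since it contains all the generators of $L$ as an ideal and is contained in $L$, this will force $\mf{h} = L$. To verify that $\mf{h}$ is an ideal, it suffices to check that $[\mf{h}, \mf{g}] \subseteq \mf{h}$ and $[\mf{h}, \mf{g}^\psi] \subseteq \mf{h}$, because $\mf{g}$ and $\mf{g}^\psi$ together generate $\chi(\mf{g})$ as a Lie algebra, and the normalizer of a subalgebra is a subalgebra. Moreover, since $\mf{h}$ is a subalgebra, it is enough to check the bracket condition on a generating set of $\mf{h}$, i.e.\ on the elements $x - x^\psi$.

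First I would compute $[x - x^\psi, y]$ for $x, y \in \mf{g}$. Writing this as $[x,y] - [x^\psi, y]$, the key trick is to use the defining relations. Expanding $[(x+y) - (x+y)^\psi, (x+y) + (x+y)^\psi] = 0$ (which holds since $[z, z^\psi] = 0$ for all $z$, so in particular for $z = x+y$, and more precisely the relation $[z,z^\psi]=0$ linearizes to $[x,y^\psi] + [y,x^\psi] = 0$ for all $x,y$), one obtains $[x, y^\psi] = -[y, x^\psi] = [x^\psi, y]$. Hence
\[
[x - x^\psi, y] = [x,y] - [x^\psi, y] = [x,y] - [x, y^\psi] = [x,y] - [x,y]^\psi \in \mf{h}.
\]
Here I used that $[x,y]^\psi = [x^\psi, y^\psi]$ and then applied the identity $[x^\psi, y^\psi]$... more carefully: from $[x^\psi,y]=[x,y^\psi]$ we get $[x,y]-[x^\psi,y]=[x,y]-[x,y^\psi]$, and I still need $[x,y^\psi]=[x,y]^\psi$? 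That is not literally true; instead I note $[x,y^\psi] = [x^\psi,y^\psi] + [x - x^\psi, y^\psi]$ and $[x^\psi,y^\psi]=[x,y]^\psi$, reducing the problem to controlling $[x - x^\psi, y^\psi]$, which is the second case below. So the cleanest route is to treat the two cases $[x-x^\psi, y]$ and $[x - x^\psi, y^\psi]$ together and show their span, together with $\mf{h}$, is closed.

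Concretely, I would prove by a single linearization argument that for all $x,y \in \mf{g}$,
\[
[x - x^\psi,\, y] \equiv [x-x^\psi,\, y^\psi] \pmod{\mf{h}} \quad\text{and}\quad [x-x^\psi,\,y] + [y - y^\psi,\, x] \in \mf{h},
\]
and that both brackets lie in $\mf{h}$ modulo the bracket $[(x - x^\psi),(y-y^\psi)]$, which is already in $\mf{h}$ since $\mf{h}$ is a subalgebra. Assembling these congruences shows $[\mf{h}, \mf{g} + \mf{g}^\psi] \subseteq \mf{h}$, hence $\mf{h}$ is an ideal containing the ideal generators of $L$, so $\mf{h} = L$. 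The main obstacle is purely bookkeeping: getting the linearized relation $[x,y^\psi] + [y,x^\psi] = 0$ correctly from $[z,z^\psi]=0$ (this is where $\operatorname{char}(K) \neq 2$ is not even needed, only bilinearity) and then chaining the resulting identities so that every error term is visibly a bracket of two elements of $\mf{h}$ or an element $w - w^\psi$. No deep input is required beyond the defining relation and the fact that normalizers are subalgebras.
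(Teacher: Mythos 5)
Your proposal is correct and follows essentially the same route as the paper: linearize $[z,z^\psi]=0$ to get $[x,y^\psi]=[x^\psi,y]$, then combine the two elements $[x-x^\psi,y-y^\psi]$ and $[x,y]-[x,y]^\psi$ of the subalgebra to express $2[x-x^\psi,y]$ (equivalently $2[x,y-y^\psi]$) inside it. One caution: your closing remark that nothing beyond the defining relations is needed understates the role of $\operatorname{char}(K)\neq 2$ --- the final step divides by $2$, and this is essential, since the paper's last section shows the lemma genuinely fails in characteristic $2$.
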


\begin{proof} 
 By the relations that define $\chi(\mf{g})$ we have that $[x+y, (x+y)^{\psi}]=0$ for all $x,y \in \mf{g}$, so
 \[ 0 = [x+y, (x+y)^{\psi}]= [x,x^{\psi}] + [x,y^{\psi}] + [y,x^{\psi}] + [y,y^{\psi}] = [x,y^{\psi}] + [y,x^{\psi}].\]
 Thus $[x,y^{\psi}] = [x^{\psi},y]$ for all $x,y \in \mf{g}$.

 Denote by $A$ the Lie subalgebra of $\chi(\mf{g})$ generated by the elements $y-y^{\psi}$, for all $y \in \mf{g}$. We want to show that 
 $A=L$, and for that it is enough to show that $[x,y-y^{\psi}] \in A$ and $[x^{\psi},y-y^{\psi}] \in A$ 
 for all $x,y \in \mf{g}$. Even more, as $[x,y-y^{\psi}] - [x^{\psi},y-y^{\psi}] = [x-x^{\psi},y-y^{\psi}] \in A$, it suffices to show that any of these
 commutators is an element of $A$.
 
 Now
 \[a_1 := [x-x^{\psi},y-y^{\psi}] = [x,y] - 2[x,y^{\psi}] + [x^{\psi},y^{\psi}] \in A\]
 and
 \[a_2 := [x,y]-[x,y]^{\psi} = [x,y] -[x^{\psi},y^{\psi}] \in A.\]
 But then
 \[2[x,y-y^{\psi}] = a_1 + a_2 \in A,\]
 which proves that $[x,y-y^{\psi}] \in A$ provided that $char(K) \neq 2$.
\end{proof}

Similarly, let $D=D(\mf{g})$ be the ideal of $\chi(\mf{g})$ generated by the elements of the form $[x,y^{\psi}]$ for all $x,y \in \mf{g}$. It can be seen
as the kernel of the homomorphism $\beta: \chi(\mf{g}) \to \mf{g} \oplus \mf{g}$ defined by $\beta(x) = (x,0)$ and $\beta(x^{\psi}) = (0,x)$ 
for all $x \in \mf{g}$.

By putting together $\alpha$ and $\beta$, we define a new homomorphism. Define
\[ \rho: \chi(\mf{g}) \to \mf{g} \oplus \mf{g} \oplus \mf{g}\]
by $\rho(x) = (x,x,0)$ and $\rho(x^{\psi})=(0,x,x)$ for all $x \in \mf{g}$, and let $W=W(\mf{g})$ be its kernel. Notice that
\[\rho(z) = (\beta_1(z), \alpha(z), \beta_2(z))\]
for all $z \in \chi(\mf{g})$, where $\beta_1$ and $\beta_2$ are the two components of $\beta$. Then clearly $ker(\rho) = ker(\alpha) \cap ker(\beta)$,
that is, $W = L \cap D$.

\begin{lemma}
For all $\mf{g}$, we have $[D,L]=0$.
\end{lemma}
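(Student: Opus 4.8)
The plan is to reduce the statement to a single commutator identity by exploiting the following standard fact: the centralizer of an \emph{ideal} of a Lie algebra is again an ideal, whereas the centralizer of a single element is merely a subalgebra. Here $D$ is generated \emph{as an ideal} by the elements $[x,y^{\psi}]$, while by Lemma~\ref{Lgenalg} the ideal $L$ is generated \emph{as a Lie algebra} by the elements $x-x^{\psi}$. Hence it suffices to prove
\[ [\,[x,y^{\psi}],\, z-z^{\psi}\,] = 0 \qquad (x,y,z\in\mf{g}).\]
Granting this, for fixed $x,y$ the centralizer of the element $[x,y^{\psi}]$ is a subalgebra of $\chi(\mf{g})$ which, by the displayed identity, contains every Lie-algebra generator $z-z^{\psi}$ of $L$; hence it contains $L$, i.e. $[[x,y^{\psi}],L]=0$. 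Since $L$ is an ideal, its centralizer $C_{\chi(\mf{g})}(L)=\{v:[v,L]=0\}$ is an ideal of $\chi(\mf{g})$, and it contains all the ideal-generators $[x,y^{\psi}]$ of $D$; therefore $D\subseteq C_{\chi(\mf{g})}(L)$, that is, $[D,L]=0$.

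To prove the displayed identity I would work with the relation $[a,b^{\psi}]=[a^{\psi},b]$ (equivalently $[a,b^{\psi}]=-[b,a^{\psi}]$, and hence also $[a^{\psi},b]=[a,b^{\psi}]$ and $[b^{\psi},a]=[b,a^{\psi}]$) already derived in the proof of Lemma~\ref{Lgenalg}. Expanding $[[x,y^{\psi}],z]$ by the Jacobi identity and using $[y^{\psi},z]=[y,z^{\psi}]$ yields
\[ [[x,y^{\psi}],z] = [[x,z],y^{\psi}] + [x,[y,z^{\psi}]].\]
Similarly, rewriting $[x,y^{\psi}]=[x^{\psi},y]$, expanding $[[x^{\psi},y],z^{\psi}]$ by the Jacobi identity, and using $[x^{\psi},z^{\psi}]=[x,z]^{\psi}$ together with $[[x,z]^{\psi},y]=[[x,z],y^{\psi}]$ yields
\[ [[x,y^{\psi}],z^{\psi}] = [[x,z],y^{\psi}] + [x^{\psi},[y,z^{\psi}]].\]
Subtracting, the terms $[[x,z],y^{\psi}]$ cancel and I obtain the key relation
\begin{equation}\label{star}
 [\,[x,y^{\psi}],\, z-z^{\psi}\,] = [\,x-x^{\psi},\, [y,z^{\psi}]\,].
\end{equation}

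Relation~\eqref{star} does not vanish outright; instead it \emph{rotates} the three letters $x,y,z$, turning a bracket ``$[\,D\text{-generator},\,L\text{-generator}\,]$'' into ``$[\,L\text{-generator},\,D\text{-generator}\,]$''. The last step is to iterate it cyclically: applying \eqref{star} successively to the triples $(x,y,z)$, $(y,z,x)$, $(z,x,y)$ and alternating with the antisymmetry of the bracket gives
\[ [\,[x,y^{\psi}],z-z^{\psi}\,] \;=\; [\,x-x^{\psi},[y,z^{\psi}]\,] \;=\; -[\,[y,z^{\psi}],x-x^{\psi}\,] \;=\;\cdots\;=\; [\,z-z^{\psi},[x,y^{\psi}]\,] \;=\; -[\,[x,y^{\psi}],z-z^{\psi}\,],\]
so that $2\,[[x,y^{\psi}],z-z^{\psi}]=0$, and the identity follows since $char(K)\neq 2$ --- the same use of the characteristic hypothesis that already appeared in Lemma~\ref{Lgenalg}.

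I expect the only real difficulty to be bookkeeping: performing the two Jacobi expansions so that the unwanted term $[[x,z],y^{\psi}]$ indeed cancels, and keeping track of which form of the relation $[a,b^{\psi}]=\pm[b,a^{\psi}]$ (and which sign from antisymmetry) is applied at each link of the cyclic chain, so that the chain closes with overall sign $-1$. Conceptually nothing beyond Lemma~\ref{Lgenalg}, the Jacobi identity, and $char(K)\neq 2$ is involved.
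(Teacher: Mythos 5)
Your proof is correct and follows essentially the same route as the paper's: your key relation is equivalent to the paper's identity $[[x,z^{\psi}],y-y^{\psi}]=[[y,z^{\psi}],x-x^{\psi}]$ (obtained there by expanding $[[x,y],z^{\psi}]$ in two ways via the Jacobi identity), and the cyclic iteration producing $w=-w$ together with the appeal to $char(K)\neq 2$ is exactly the paper's conclusion. The only differences are cosmetic: you justify the reduction to generators of $D$ and $L$ via centralizers more explicitly, and you organize the three-fold sign count slightly differently.
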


\begin{proof}
By Lemma \ref{Lgenalg}, the ideal $L$ is generated as a Lie algebra by the elements $x-x^{\psi}$, for $x \in \mf{g}$. Thus it is enough to show that 
\[ [[y,z^{\psi}], x-x^{\psi}] = 0\] 
for all $x,y,z \in \mf{g}$. We will make repeated use of the fact that $[x,y^{\psi}] = [x^{\psi},y]$ for all $x,y \in \mf{g}$.

Consider the element $[[x,y],z^{\psi}] \in \chi(\mf{g})$, for some $x,y,z \in \mf{g}$. By the Jacobi identity we have
\begin{equation} \label{eqA}
 [[x,y],z^{\psi}] = [[x,z^{\psi}],y] + [x, [y,z^{\psi}]].
\end{equation}
On the other hand, as $[[x,y],z^{\psi}] = [[x,y]^{\psi},z] = [[x^{\psi},y^{\psi}], z]$, we have
\begin{equation} \label{eqB}
 [[x,y],z^{\psi}] = [[x^{\psi},z],y^{\psi}] + [x^{\psi}, [y^{\psi},z]].
\end{equation}
By subtracting \eqref{eqB} from \eqref{eqA} we obtain 
\[ 0 = [[x,z^{\psi}],y-y^{\psi}] + [x-x^{\psi}, [y,z^{\psi}]],\]
and then
\begin{equation}  \label{id1}
 [[x,z^{\psi}],y-y^{\psi}] = [[y,z^{\psi}],x-x^{\psi}]
\end{equation}
for all $x,y,z \in \mf{g}$.

Now
\[\begin{array}{lcl}
[[x,z^{\psi}],y-y^{\psi}] & = & [[x^{\psi},z],y-y^{\psi}] \\
                          & = & -[[z,x^{\psi}],y-y^{\psi}]\\
                          & = & -[[y,x^{\psi}],z-z^{\psi}].
\end{array} 
\]
by \eqref{id1}. If we apply once again this reasoning we get
\[\begin{array}{lcl}
[[x,z^{\psi}],y-y^{\psi}] & = & -[[y,x^{\psi}],z-z^{\psi}] \\
                          & = & [[x,y^{\psi}],z-z^{\psi}] \\
                          & = & [[z,y^{\psi}],x-x^{\psi}].
\end{array} 
\]
The equality above, together with \eqref{id1}, gives us that 
\[ [[y,z^{\psi}],x-x^{\psi}] = - [[y,z^{\psi}],x-x^{\psi}] \]
for all $x,y,z \in \mf{g}$, which completes the proof, since $char(K) \neq 2$.
\end{proof}

The lemma above tells us that $W$ is actually a central subalgebra of $L+D$. In particular, it is an abelian ideal of $\chi(\mf{g})$ as 
we announced.

Now we analyze the quotient $\chi(\mf{g})/W$ or, equivalently, the image $Im(\rho)$.

Denote by $p_1$, $p_2$ and $p_3$ the projections of $\mf{g} \oplus \mf{g} \oplus \mf{g}$ 
onto its first, second and third coordinate, respectively. For any $x \in \mf{g}$ we have 
\[x = p_1( \rho(x)) = p_2 (\rho(x)) = p_3( \rho(x^{\psi})),\]
thus the image $Im(\rho)$ is a \textit{subdirect sum} in $\mf{g} \oplus \mf{g} \oplus \mf{g}$. We will see in the next proposition that actually
$Im(\rho)$ projects surjectively simultaneously on any two copies of $\mf{g}$, which allows us to get some information about finiteness properties.
This will prove Theorem \ref{T1}.

\begin{prop}  \label{Imrhofp}
If $\mf{g}$ is finitely presentable (resp. of type $FP_2$), then $Im(\rho)$ is finitely presented (resp. of type $FP_2$) as well. 
\end{prop}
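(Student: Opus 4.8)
The plan is to identify $Im(\rho)$ as a subdirect sum inside $\mf{g} \oplus \mf{g} \oplus \mf{g}$ that surjects onto each pair of coordinates, and then to invoke the machinery of Kochloukova and Martínez-Pérez on subdirect sums of Lie algebras (\cite{KochMart}), in the spirit of the $1\text{-}2\text{-}3$ theorem for groups. First I would make precise the three coordinate projections $\rho_{ij}\colon Im(\rho)\to\mf{g}\oplus\mf{g}$ obtained by composing $\rho$ with the projection of $\mf{g}^{\oplus 3}$ onto the $i$-th and $j$-th factors. Using the elements $x\mapsto(x,x,0)$, $x^\psi\mapsto(0,x,x)$ and their brackets (recall $[x,y^\psi]=[x^\psi,y]$, so $[x,y^\psi]\mapsto(0,0,0)$ in the first two coordinates but picks up $[x,y]$-type contributions), one checks directly that each $\rho_{ij}$ is surjective: the images of $x$ and $x^\psi$ already generate the first two coordinates freely enough, and a short bracket computation shows the "fibre" over one coordinate contains a full copy of $\mf{g}'$, giving surjectivity onto $\mf{g}\oplus\mf{g}$ in each of the three cases. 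Equivalently, the kernels $Ker(\rho_{12})$, $Ker(\rho_{13})$, $Ker(\rho_{23})$ of these projections inside $Im(\rho)$ each map, under the remaining single projection, onto an ideal of $\mf{g}$ which one identifies with $\mf{g}'$ (or at least with an ideal of finite codimension, which is all that is needed).

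The core step is then to quote the Lie-algebra subdirect sum theorem. The relevant statement from \cite{KochMart} says: if $\mf{s}\leq\mf{g}_1\oplus\mf{g}_2\oplus\mf{g}_3$ is a subdirect sum such that each $\mf{g}_i$ is finitely presentable (resp. of type $FP_2$) and each of the three pairwise projections $\mf{s}\to\mf{g}_i\oplus\mf{g}_j$ is surjective, then $\mf{s}$ is finitely presentable (resp. of type $FP_2$). Applying this with $\mf{g}_1=\mf{g}_2=\mf{g}_3=\mf{g}$ and $\mf{s}=Im(\rho)$ yields the proposition immediately, once the hypotheses (finite presentability / $FP_2$ of $\mf{g}$ and surjectivity of the three projections) have been verified. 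I would phrase the verification of surjectivity as its own short paragraph or sublemma, since it is the only genuinely computational ingredient.

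The main obstacle I anticipate is precisely the surjectivity of the pairwise projections, i.e. controlling what the ideal generated by the $[x,y^\psi]$ looks like modulo $W$. Concretely: the projection onto, say, the first and third coordinates has image containing $(x,0,0)=\rho(x)-\rho(\text{stuff})$-type elements only up to $\mf{g}'$, because $\rho(x)=(x,x,0)$ and $\rho(x^\psi)=(0,x,x)$ cannot be combined linearly to kill the middle coordinate without also producing something — one needs brackets to land purely in the first coordinate, and $[x,y^\psi]\mapsto(*,0,*)$ only after using $[x,y^\psi]=[x^\psi,y]$. So I must show that the projection of $D=\langle\langle[x,y^\psi]\rangle\rangle$ to the first coordinate is all of $\mf{g}'$, and that $Im(\rho)+ (\text{first coordinate's }\mf{g}')$ is everything; this is the kind of bracket bookkeeping carried out in Lemma \ref{Lgenalg} and the subsequent lemma, and it should go through with $char(K)\neq 2$. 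A secondary point to be careful about is the exact hypotheses of the cited theorem from \cite{KochMart} — whether it requires all three pairwise projections onto the full $\mf{g}\oplus\mf{g}$ or merely onto subalgebras of finite codimension — and I would adapt the statement of the surjectivity sublemma accordingly. Granting the subdirect sum theorem and the surjectivity check, Theorem \ref{T1} follows from Proposition \ref{Imrhofp} together with the identification $\chi(\mf{g})/W\cong Im(\rho)$.
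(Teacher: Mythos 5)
Your overall strategy is the same as the paper's: realize $Im(\rho)$ as a subdirect sum in $\mf{g}\oplus\mf{g}\oplus\mf{g}$ whose pairwise projections are surjective, and then quote the Kochloukova--Mart\'inez-P\'erez results on subdirect sums. But two points need fixing. First, the surjectivity of the pairwise projections, which you single out as the main computational obstacle, is in fact trivial linear algebra and needs no brackets at all: since $p_{(i,j)}$ simply forgets the remaining coordinate, one has $(x,y)=p_{(1,3)}(\rho(x+y^{\psi}))$, $(x,y)=p_{(1,2)}(\rho(x-x^{\psi}+y^{\psi}))$ and $(x,y)=p_{(2,3)}(\rho(x-y+y^{\psi}))$. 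Your worry about ``killing the middle coordinate'' is misplaced, because the projection kills it for you; there is no need to analyze the ideal $D$ modulo $W$ here.

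Second, and this is the genuine gap, you misstate the hypotheses of the cited result. Corollaries D1 and F1 of \cite{KochMart} require, in addition to the surjectivity of the three pairwise projections, that $Im(\rho)\cap\mf{g}_i\neq 0$ for each $i=1,2,3$ (the $\mf{g}_i$ viewed as subalgebras of the direct sum). This is where the bracket computations actually enter: $\rho([x,y^{\psi}])=(0,[x,y],0)$, $\rho([x,y-y^{\psi}])=([x,y],0,0)$ and $\rho([x^{\psi},y^{\psi}-y])=(0,0,[x,y])$ show that the condition holds precisely when $\mf{g}'\neq 0$. When $\mf{g}$ is abelian the condition fails, the corollaries do not apply, and a separate (easy) argument is needed: $Im(\rho)$ is then abelian and finitely generated, hence finitely presented and of type $FP_2$. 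As written, your proof quotes a theorem with strictly weaker hypotheses than the one actually available, and it silently breaks down for abelian $\mf{g}$.
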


\begin{proof}
Suppose first that $\mf{g}$ is finitely presentable. For convenience, in this proof we denote by $\mf{g}_1 \oplus \mf{g}_2 \oplus \mf{g}_3$ the range of $\rho$. 
Denote by $p_{(i,j)}: \mf{g}_1 \oplus \mf{g}_2 \oplus \mf{g}_3 \twoheadrightarrow \mf{g}_i \oplus \mf{g}_j$ the projection 
for $(i,j) \in \{ (1,2), (1,3), (2,3)\}$. For any $(x,y) \in \mf{g} \oplus \mf{g}$ we have
\[\begin{array}{lcl}
(x,y) & = & p_{1,2}( \rho(x-x^{\psi}+y^{\psi}))  \\
      & = & p_{1,3}( \rho(x+y^{\psi})) \\
      & = & p_{2,3}( \rho(x-y+y^{\psi})).
\end{array} 
\]
Thus $p_{(i,j)}(Im(\rho)) = \mf{g}_i \oplus \mf{g}_j$ for all $i,j$. It follows by Corollary D1 in \cite{KochMart} that $Im(\rho)$ will
be finitely presented as soon as $Im(\rho) \cap \mf{g}_i \neq 0$ for $i=1,2,3$ (where $\mf{g}_i$ is seen as a subalgebra of 
$\mf{g}_1 \oplus \mf{g}_2 \oplus \mf{g}_3$).

For any $x,y \in \mf{g}$ we have
\[\rho([x,y^{\psi}]) = [(x,x,0),(0,y,y)] = (0,[x,y],0) \in Im(\rho) \cap \mf{g}_2.\]
So if $\mf{g}' \neq 0$ (so that $[x,y] \neq 0$ for some $x,y$), then $Im(\rho) \cap \mf{g}_2 \neq 0$. In this case we also have
\[ \rho([x,y-y^{\psi}]) = ([x,y],0,0) \in Im(\rho) \cap  \mf{g}_1\]
and 
\[ \rho([x^{\psi},y^{\psi}-y] = (0,0,[x,y]) \in Im(\rho) \cap  \mf{g}_3,\]
so that $Im(\rho) \cap \mf{g}_i \neq 0$ also for $i=1$ and $i=3$, as we wanted. If $\mf{g}'$ is actually trivial, then it is easily seen
that $Im(\rho)$ is abelian. But it is also finitely generated, because $\chi(\mf{g})$
is, so $Im(\rho)$ is again finitely presented in this case.

The proof for $\mf{g}$ of type $FP_2$ is the same, except that we use Corollary F1 of \cite{KochMart} instead of Corollary D1.
\end{proof}
Notice that $\mf{g}$ is a retract of $Im(\rho)$, so the converse to the proposition above also holds.
\end{section}

\begin{section}{A condition that forces \texorpdfstring{$W$}{W} to be finite dimensional}

We fix $\mf{g}$ and we study $W=W(\mf{g})$ by means of the extension $W \rightarrowtail L \twoheadrightarrow \rho(L)$. Since $W$ is central in $L$, the associated 5-term
 exact sequence can be written as
\[  H_2(L;K) \to H_2(\rho(L);K) \to W \to H_1(L;K) \to H_1(\rho(L);K) \to 0. \]
It follows that $W$ is finite dimensional if both $H_2(\rho(L);K)$ and $H_1(L;K) \simeq L/L'$ are. 

\subsection{Bounding the dimension of \texorpdfstring{$H_2(\rho(L);K)$}{H2(rho(L);K)}}
Notice that $\rho(L)$ is the subalgebra of $\mf{g} \oplus \mf{g} \oplus \mf{g}$ generated by the elements of the form
\[ \rho(x-x^{\psi}) = (x,x,0) - (0,x,x) = (x,0,-x)\]
for $x \in \mf{g}$. We can identify it with the following subalgebra of $\mf{g} \oplus \mf{g}$:
\[\rho(L) \simeq S:= \langle \{(x,-x) \hbox{ }| \hbox{ } x \in \mf{g}\} \rangle \subseteq \mf{g} \oplus \mf{g}.\]

It is clear that $S$ is a subdirect sum in $\mf{g} \oplus \mf{g}$, but it is not in general of type $FP_2$ (which would imply that $H_2(S;K)$ 
is finite dimensional). For instance, this is not the case if $\mf{g}$ is free non-abelian by Theorem A of \cite{KochMart}. We need to impose 
some restrictions.

\begin{lemma}  \label{homolfg}
If $\mf{g}$ is of type $FP_2$ and $\mf{g}'/\mf{g}''$ is finite dimensional, then $H_2(S;K)$ is finite dimensional as well.
\end{lemma}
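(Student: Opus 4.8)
The plan is to place $S$ at the top of a short exact sequence over $\mf{g}$ and feed it into the Lyndon--Hochschild--Serre spectral sequence. Restricting the first projection $\mf{g}\oplus\mf{g}\to\mf{g}$ to $S$ gives a surjection $S\twoheadrightarrow\mf{g}$ (onto because $S$ is subdirect) with kernel $\mf{n}:=S\cap(0\oplus\mf{g})$. I would first check that $\mf{n}=0\oplus\mf{g}'$, so that $\mf{n}\cong\mf{g}'$: the subspace $T=\{(a,b):a+b\in\mf{g}'\}$ is a subalgebra containing every generator $(z,-z)$, hence $S\subseteq T$, which forces $\mf{n}\subseteq 0\oplus\mf{g}'$; conversely $([x,y],[x,y])=[(x,-x),(y,-y)]\in S$ and $([x,y],-[x,y])\in S$, so $(0,2[x,y])\in S$ and, since $char(K)\neq 2$, $(0,[x,y])\in S$, giving $0\oplus\mf{g}'\subseteq\mf{n}$. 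With the short exact sequence $\mf{g}'\cong\mf{n}\rightarrowtail S\twoheadrightarrow\mf{g}$ the spectral sequence reads $E^2_{p,q}=H_p(\mf{g};H_q(\mf{g}';K))\Rightarrow H_{p+q}(S;K)$, and since $H_2(S;K)$ has a finite filtration with subquotients of $E^2_{2,0}$, $E^2_{1,1}$ and $E^2_{0,2}$, it suffices to prove that these three vector spaces are finite dimensional.

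Two of them are easy. First, $E^2_{2,0}=H_2(\mf{g};K)$ is finite dimensional because $\mf{g}$ is of type $FP_2$. Second, $E^2_{1,1}=H_1(\mf{g};H_1(\mf{g}';K))=H_1(\mf{g};\mf{g}'/\mf{g}'')$ has finite dimensional coefficients by hypothesis, and whenever $\mf{g}$ is of type $FP_2$ and $A$ is finite dimensional, $H_j(\mf{g};A)$ is finite dimensional for $j\le 2$: take a projective resolution of $K$ with finitely generated terms in degrees $\le 2$ and observe that $H_j(\mf{g};A)$ is a subquotient of $A\otimes_{\mc{U}(\mf{g})}P_j$, which is finite dimensional when $P_j$ is finitely generated.

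The real point is $E^2_{0,2}=H_0(\mf{g};H_2(\mf{g}';K))$, the coinvariants of $H_2(\mf{g}';K)$ under $\mf{g}=S/\mf{n}$; here $\mf{g}'$ is not assumed finitely generated, so $H_2(\mf{g}';K)$ itself may be infinite dimensional and one must exploit that only its coinvariants enter. Since $\mf{g}'$ acts trivially on $H_2(\mf{g}';K)$, the action in play factors through $Q:=\mf{g}/\mf{g}'$, which is finite dimensional as $\mf{g}$ is finitely generated; tracing through the conjugation action of $S$ on $\mf{n}=0\oplus\mf{g}'$ one sees that this $Q$-action differs only by the sign automorphism of $Q$ from the standard one coming from the ideal $\mf{g}'$ of $\mf{g}$, so that $H_0(\mf{g};H_2(\mf{g}';K))\cong H_0(Q;H_2(\mf{g}';K))$. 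I would then invoke the spectral sequence of $\mf{g}'\rightarrowtail\mf{g}\twoheadrightarrow Q$, namely $\bar E^2_{p,q}=H_p(Q;H_q(\mf{g}';K))\Rightarrow H_{p+q}(\mf{g};K)$, whose entry $\bar E^2_{0,2}$ is precisely this coinvariant space. The only differentials meeting position $(0,2)$ are the incoming $d^2\colon\bar E^2_{2,1}\to\bar E^2_{0,2}$ and $d^3\colon\bar E^3_{3,0}\to\bar E^3_{0,2}$, and their sources $\bar E^2_{2,1}=H_2(Q;\mf{g}'/\mf{g}'')$ and $\bar E^3_{3,0}$ (a subquotient of $\bar E^2_{3,0}=H_3(Q;K)$) are finite dimensional, because $Q$ is finite dimensional --- hence of type $FP_\infty$ --- and $\mf{g}'/\mf{g}''$ is finite dimensional. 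Therefore $\bar E^2_{0,2}$ is an extension of $\bar E^\infty_{0,2}$, a subspace of the finite dimensional space $H_2(\mf{g};K)$, by a finite dimensional space, so $\bar E^2_{0,2}=E^2_{0,2}$ is finite dimensional and the proof is complete.

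The main obstacle is exactly this last term: the hypothesis $\dim\mf{g}'/\mf{g}''<\infty$ is used precisely to make $\bar E^2_{2,1}$ finite dimensional, and without some such restriction --- for instance for $\mf{g}$ free non-abelian --- the coinvariants $H_0(\mf{g};H_2(\mf{g}';K))$ are genuinely infinite dimensional, in accordance with $S$ failing to be of type $FP_2$ in that case. Everything else is routine bookkeeping with first-quadrant spectral sequences together with the elementary facts about modules of type $FP$ recalled in Section \ref{sec2}.
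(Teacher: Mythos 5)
Your proof is correct, but it runs along a genuinely different decomposition from the paper's. The paper first shows $\mf{g}'\oplus\mf{g}'\subseteq S$ and uses the extension $\mf{g}'\oplus\mf{g}'\rightarrowtail S\twoheadrightarrow\mf{g}/\mf{g}'$, so that the base of its spectral sequence is the finite-dimensional algebra $\mf{g}/\mf{g}'$ throughout; the fibre term $H_2(\mf{g}'\oplus\mf{g}';K)$ is then unpacked by the K\"unneth formula, and the crucial component is handled by proving outright that $H_2(\mf{g}';K)$ is \emph{finitely generated as a $\mc{U}(\mf{g}/\mf{g}')$-module} (apply $K\otimes_{\mc{U}(\mf{g}')}-$ to a resolution of $K$ over $\mc{U}(\mf{g})$ with finitely generated terms in degrees $\le 2$ and use noetherianity of $\mc{U}(\mf{g}/\mf{g}')$). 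You instead take the smaller kernel $\mf{n}=0\oplus\mf{g}'$ with quotient $\mf{g}$ itself, which makes $E^2_{2,0}$ and $E^2_{1,1}$ immediate from the $FP_2$ hypothesis and $\dim\mf{g}'/\mf{g}''<\infty$ and avoids K\"unneth altogether, and you control only the coinvariants $H_0(\mf{g};H_2(\mf{g}';K))$ by comparing with the spectral sequence of $\mf{g}'\rightarrowtail\mf{g}\twoheadrightarrow\mf{g}/\mf{g}'$, where position $(0,2)$ is pinched between finite-dimensional incoming terms and a subquotient of $H_2(\mf{g};K)$. The trade-off is that the paper's argument yields the stronger statement that all $H_p(\mf{g}/\mf{g}';H_2(\mf{g}';K))$ are finite dimensional, whereas yours establishes exactly what is needed with less machinery; your observations that the $S$-action on $H_*(\mf{g}';K)$ factors through $\mf{g}/\mf{g}'$ (inner derivations act trivially) and that negating the action does not change coinvariants are the right points to flag, and they hold. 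Both routes ultimately rest on the same two inputs: $\mf{g}$ of type $FP_2$ and the finite-dimensionality of $\mf{g}/\mf{g}'$ and $\mf{g}'/\mf{g}''$.
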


\begin{proof}
 For any $x,y \in \mf{g}$, we have $[(x,-x),(y,-y)] = ([x,y],[x,y]) \in S$. Thus 
 \[([x,y],0) = \frac{1}{2}(([x,y],[x,y])+([x,y],-[x,y])) \in S.\] 
 Similarly, $(0,[x,y]) \in S$, so $\mf{g}' \oplus \mf{g}' \subseteq S$.
 
 Notice that $S/(\mf{g}' \oplus \mf{g}') \simeq \mf{g}/\mf{g}'$, and an isomorphism can be given by projection on the first coordinate.
 Consider the Lyndon-Hochschild-Serre spectral sequence associated to this quotient:
 \[ E_{p,q}^2 = H_p(\mf{g}/\mf{g}'; H_q(\mf{g}' \oplus \mf{g}'; K)) \Rightarrow H_{p+q}(S;K).\]
If we want to show that $H_2(S;K)$ is finite dimensional, it is enough to show that $E_{p,q}^2$ is finite dimensional for all $p,q \geq 0$ with $p+q =2$.

First consider $(p,q) = (2,0)$. Clearly $H_0(\mf{g}' \oplus \mf{g}'; K) \simeq K$, so $E_{2,0}^2 \simeq H_2(\mf{g}/\mf{g}'; K)$. But $\mf{g}/\mf{g}'$
is finite dimensional, hence of type $FP_{2}$, so $H_2(\mf{g}/\mf{g}'; K)$ is finite dimensional.

Let $(p,q)=(1,1)$. First of all, $H_1(\mf{g}' \oplus \mf{g}'; K) \simeq \mf{g}'/\mf{g}'' \oplus \mf{g}'/\mf{g}''$. The action of 
$\mf{g}/\mf{g}'$ on  $H_1(\mf{g}' \oplus \mf{g}'; K)$ is then converted in an action by $\mf{g}/\mf{g}'$ on 
$\mf{g}'/\mf{g}'' \oplus \mf{g}'/\mf{g}''$, which is induced by the adjoint action on the first coordinate and the same on the second
coordinate, but with opposite sign.

Now, $\mf{g}'/\mf{g}''$ is clearly finitely generated as a $\mf{g}/\mf{g}'$-module, and 
since $\mf{g}/\mf{g}'$ is of finite dimension,
$\mathcal{U}(\mf{g}/\mf{g}')$ is noetherian. This implies that $\mf{g}'/\mf{g}'' \oplus \mf{g}'/\mf{g}''$ is
actually of type $FP_{\infty}$ as a $\mathcal{U}(\mf{g}/\mf{g}')$-module. 
Thus $H_1(\mf{g}/\mf{g}'; H_1(\mf{g}' \oplus \mf{g}'; K)) \simeq H_1(\mf{g}/\mf{g}'; \mf{g}'/\mf{g}'' \oplus \mf{g}'/\mf{g}'')$
is finite dimensional.
 
Finally, let $(p,q)=(0,2)$. Now we want to show that $H_0(\mf{g}/\mf{g}'; H_2(\mf{g}' \oplus \mf{g}'; K))$ is finite dimensional. By the Künneth formula 
\[ H_2(\mf{g}' \oplus \mf{g}'; K) \simeq \oplus_{0 \leq i \leq 2} (H_i(\mf{g}';K) \otimes_K H_{2-i}(\mf{g}';K)).\]
Clearly it is enough to show that each of the components of the direct sum above is finitely generated as a $\mf{g}/\mf{g}'$-module.

One of the components is
\[ H_1(\mf{g}';K) \otimes_K H_1(\mf{g}';K) \simeq \mf{g}'/\mf{g}'' \otimes_K \mf{g}'/\mf{g}'',\]
which is clearly finitely generated, since $\mf{g}'/\mf{g}''$ is of finite dimension.

The other two components are isomorphic and we have
\[H_2(\mf{g}';K) \otimes_K H_0(\mf{g}';K) \simeq H_2(\mf{g}';K) \otimes_K K \simeq H_2(\mf{g}';K).\]
Consider a projective resolution
\[ \mathcal{P}: \ldots \to P_n \to P_{n-1} \to  \ldots \to P_2 \to P_1 \to P_0 \to K \to 0\]
of $K$ as a $\mathcal{U}(\mf{g})$-module, with $P_j$ finitely generated for $j \leq 2$. By applying the functor $K \otimes_{\mathcal{U}(\mf{g}')} -$,
we get a complex of $\mathcal{U}(\mf{g}/\mf{g}')$-modules, and the modules are still finitely generated up to dimension $2$. As $\mathcal{U}(\mf{g}/\mf{g}')$
is noetherian, the homologies of this complex are also finitely generated up to dimension $2$. But
\[ H_i(K \otimes_{\mathcal{U}(\mf{g}')} \mathcal{P}) \simeq H_i(\mf{g}'; K).\]
Thus $H_2(\mf{g}';K)$ is finitely generated as a $\mathcal{U}(\mf{g}/\mf{g}')$-module, as we wanted.
\end{proof}

\subsection{Bounding the dimension of \texorpdfstring{$L/L'$}{L/L')}}
 Lima and Oliveira proved in \cite{LimaOliveira} that, in the 
 group-theoretic case, the quotient $L/L'$ is finitely generated as soon as the original group 
 $G$ is finitely generated. The idea was to realize $L/L'$ as a certain quotient of the augmentation ideal $Aug(\mathbb{Z}G)$, where a finite 
 generating set could be detected more easily. We adapt their argument.
 
 Recall that we denote by $\mathcal{U}(\mf{g})$ the universal enveloping algebra of $\mf{g}$. It can be seen as an abelian
 Lie algebra, that is, a vector space with trivial bracket. There is an action of $\mf{g}$ on it by left multiplication in the associative sense, that is, for any $x \in \mf{g}$ and 
 $x_1 \cdots x_n \in \mathcal{U}(\mf{g})$ a monomial, the action is given by
 \[ x \cdot (x_1 \cdots x_n) = x x_1 \cdots x_n.\] 
 The augmentation ideal $Aug(\mathcal{U}(\mf{g}))$ is clearly invariant by this action,
 so we can consider the semi-direct product $\Gamma = Aug(\mathcal{U}(\mf{g})) \rtimes \mf{g}$. Our convention is that, for $u_1, u_2 \in Aug(\mathcal{U}(\mf{g}))$
 and $x_1, x_2 \in \mf{g}$, the bracket is given by
 \[ [(u_1,x_1), (u_2,x_2)] = (x_1 u_2 - x_2 u_1, [x_1,x_2]).\]
 
 Consider the Lie algebra presented by
 \[ \Delta = \langle \mf{g}, \mf{g}^{\psi} \hbox{ } | \hbox{ } [\mc{L}, \mc{L}] = 0 \rangle,\]
 where $\mc{L} = \langle \langle x-x^{\psi}; x \in \mf{g} \rangle \rangle$, an ideal of the free Lie sum of $\mf{g}$ and $\mf{g}^{\psi}$. 
 Define $\theta: \Delta \to \Gamma$ by 
 \[ \theta(x) = (0,x) \hbox{  and  } \theta(x^{\psi}) = (-x,x).\]
 It is not hard to check this defines $\theta$ as a homomorphism of Lie algebras.
 Notice that for any $x_1, \ldots, x_n \in \mf{g}$ we have
 \[ \theta(x_1-x_1^{\psi}) = (x_1, 0)\]
 and
 \[ \theta([x_1, \ldots, x_{n-1}, x_n-x_n^{\psi}]) = (x_1 \cdots x_n, 0),\]
 where $[x_1, \ldots, x_{n-1}, x_n-x_n^{\psi}]$ is a right-normed bracket. So $\theta(\mc{L}) = Aug(\mc{U}(\mf{g}))$. Also, $\theta$ is surjective, as its image contains a basis for 
 $Aug(\mathcal{U}(\mf{g}))$, as well as all elements of the form $(0,x)$, for $x \in \mf{g}$. 
 
 \begin{lemma}
  $\theta$ is injective.
 \end{lemma}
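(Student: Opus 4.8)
\emph{The plan.} I would construct an explicit inverse for $\theta$ (surjectivity is already known), after first reducing the whole question to a statement about $\mc{U}(\mf{g})$-modules. Write $\mf{h}=\mf{g}\ast\mf{g}^{\psi}$ for the free Lie sum and let $\mc{L}_0\subseteq\mf{h}$ be the ideal generated by the elements $x-x^{\psi}$, so that the ``identify the copies'' map gives $\mf{h}/\mc{L}_0\cong\mf{g}$ and $\Delta=\mf{h}/[\mc{L}_0,\mc{L}_0]$ ($[\mc{L}_0,\mc{L}_0]$ being an ideal of $\mf{h}$ by the Jacobi identity). Let $\mc{L}\subseteq\Delta$ be the image of $\mc{L}_0$; then $\mc{L}$ is an abelian ideal with $\Delta=\mc{L}\rtimes\mf{g}$, so $\mc{L}$ is a $\mc{U}(\mf{g})$-module via the conjugation action coming from the splitting. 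Since $[\mc{L},\mc{L}]=0$, for $m\in\mc{L}$ we have $[x,m]-[x^{\psi},m]=[x-x^{\psi},m]=0$, so $x$ and $x^{\psi}$ act alike on $\mc{L}$, the module $\mc{L}$ is generated over $\mc{U}(\mf{g})$ by the classes $\ell_x:=\overline{x-x^{\psi}}$, and $x\mapsto\ell_x$ is $K$-linear. Under the decomposition $\Gamma=Aug(\mc{U}(\mf{g}))\rtimes\mf{g}$, the map $\theta$ is the identity on the $\mf{g}$-summand and carries $\mc{L}$ onto $Aug(\mc{U}(\mf{g}))$ by a $\mc{U}(\mf{g})$-module homomorphism $\bar\theta\colon\mc{L}\to Aug(\mc{U}(\mf{g}))$ with $\bar\theta(\ell_x)=x$. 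Because $Aug(\mc{U}(\mf{g}))$ and $\mf{g}$ meet trivially in $\Gamma$, one checks $\ker\theta=\ker\bar\theta$, so it suffices to prove $\bar\theta$ injective; and for that it is enough to produce a $\mc{U}(\mf{g})$-module homomorphism $q\colon Aug(\mc{U}(\mf{g}))\to\mc{L}$ with $q(x)=\ell_x$ for all $x\in\mf{g}$, since then $q\circ\bar\theta$ and $\mathrm{id}_{\mc{L}}$ are module endomorphisms agreeing on the generators $\ell_x$, hence equal, making $\bar\theta$ injective (and then an isomorphism).

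\emph{Constructing $q$.} I would use that the Chevalley--Eilenberg resolution identifies $Aug(\mc{U}(\mf{g}))$ with $F/N$, where $F=\mc{U}(\mf{g})\otimes_K\mf{g}$ is the free $\mc{U}(\mf{g})$-module on $\mf{g}$ (with $x$ corresponding to $1\otimes x$) and $N$ is the submodule generated by the elements $x\otimes y-y\otimes x-1\otimes[x,y]$, $x,y\in\mf{g}$. Since $x\mapsto\ell_x$ is $K$-linear it extends to a module map $F\to\mc{L}$, $1\otimes x\mapsto\ell_x$, which descends to the required $q$ precisely when every generator of $N$ dies, i.e. when
\[ x\cdot\ell_y-y\cdot\ell_x=\ell_{[x,y]} \]
holds in $\mc{L}$ for all $x,y\in\mf{g}$, where $x\cdot\ell_y$ denotes the conjugation action, namely the class of $[x,y-y^{\psi}]=[x,y]-[x,y^{\psi}]$. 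Expanding the difference of the two sides and using antisymmetry of the bracket together with $[x,y]^{\psi}=[x^{\psi},y^{\psi}]$, one obtains exactly the class of
\[ [x,y]-[x,y^{\psi}]-[x^{\psi},y]+[x^{\psi},y^{\psi}]=[x-x^{\psi},y-y^{\psi}]=[\ell_x,\ell_y], \]
which is $0$ in $\mc{L}$ because $[\mc{L},\mc{L}]=0$. This produces $q$ and finishes the proof.

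\emph{Where the work is.} The only genuinely non-formal points are the bookkeeping in the first paragraph---identifying $\mc{L}$ and $Aug(\mc{U}(\mf{g}))$ as $\mc{U}(\mf{g})$-modules compatibly with the splittings of $\Delta$ and $\Gamma$---and the short computation at the end; and the conceptual heart of the matter is that the single defining module relation of $Aug(\mc{U}(\mf{g}))$, namely $x\cdot y-y\cdot x=[x,y]$, matches under $x\leftrightarrow\ell_x$ with precisely the relation $[\mc{L},\mc{L}]=0$ that was imposed to form $\Delta$. If one prefers not to invoke the presentation of $Aug(\mc{U}(\mf{g}))$, the map $q$ can instead be defined directly on monomials by $q(x_1\cdots x_n)=x_1\cdots x_{n-1}\cdot\ell_{x_n}$ and one checks it respects the PBW straightening relations, the only non-trivial case being again the displayed identity.
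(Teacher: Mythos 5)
Your argument is correct, but it takes a genuinely different route from the paper's. The paper proves injectivity by hand: it fixes an ordered basis of $\mf{g}$, shows by an inductive rewriting argument (the ``Claim'') that $\mc{L}$ is spanned by the ordered right-normed brackets $\ell(x_{j_1},\dots,x_{j_n})$, and observes that $\theta$ carries this spanning set onto a Poincar\'e--Birkhoff--Witt basis of $Aug(\mc{U}(\mf{g}))$, so that a spanning set maps to a linearly independent set and $\theta|_{\mc{L}}$ is forced to be injective. You instead work at the level of $\mc{U}(\mf{g})$-modules: after the same reduction to $\theta|_{\mc{L}}$, you build a section $q\colon Aug(\mc{U}(\mf{g}))\to\mc{L}$ out of the degree-one presentation of $Aug(\mc{U}(\mf{g}))$ coming from the Chevalley--Eilenberg resolution, and the only thing to verify is the single relation $x\cdot\ell_y-y\cdot\ell_x=\ell_{[x,y]}$, whose failure is measured exactly by the class of $[x-x^{\psi},y-y^{\psi}]$ --- precisely the relation killed in forming $\Delta$. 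That computation is right (note it uses only antisymmetry and $[x,y]^{\psi}=[x^{\psi},y^{\psi}]$, not the relation $[x,x^{\psi}]=0$, which indeed is not available in $\Delta$), and the bookkeeping reductions (that $\ker\theta=\ker\bar\theta$, that $\mc{L}$ is generated as a module by the $\ell_x$ because it is an abelian ideal on which $x$ and $x^{\psi}$ act alike) all check out. Your version is cleaner and makes transparent \emph{why} $\theta$ is an isomorphism: the defining relation of $\Delta$ matches the defining module relation of the augmentation ideal. The trade-off is that you outsource the combinatorics to the exactness of the Chevalley--Eilenberg complex in degree one (or, in your alternative formulation, to a PBW straightening check), which is itself a PBW-type fact; the paper's proof is longer but self-contained, relying only on the PBW basis of $\mc{U}(\mf{g})$ itself.
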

 \begin{proof}
  First notice that $\Delta = \mc{L} \rtimes \mf{g}$. Since $\theta$ restricts to the identity on $\mf{g}$, we only need to show that 
  $\theta |_{\mc{L}}: \mc{L} \to Aug(\mc{U}(\mf{g}))$ is injective.

 Let $\{x_j\}_{j \in J}$ be an ordered basis of $\mf{g}$. Observe that $\mc{L}$ is linearly spanned by the brackets 
 $[y_1, \ldots, y_{n-1}, x_j-x_j^{\psi}]$, 
 with $y_i \in \mf{g} \cup \mf{g}^{\psi}$ and $j \in J$. Moreover, since $[\mc{L},\mc{L}]=0$, we have $[y, \ell] = [y^{\psi}, \ell]$ 
 for all $\ell \in \mc{L}$ and $y \in \mf{g}$, so we can 
 actually assume that each $y_i$ lies in $\mf{g}$. Finally, we can pass to the elements of the basis: $\mc{L}$ is spanned by 
 $[x_{j_1}, \ldots, x_{j_{n-1}}, x_{j_n}-x_{j_n}^{\psi}]$ with $j_1, \ldots, j_n \in J$.

 For any $j_1, \ldots, j_n \in J$, let 
 \[\ell(x_{j_1}, \ldots, x_{j_n}) := [x_{k_1}, \ldots, x_{k_{n-1}}, x_{k_n}-x_{k_n}^{\psi}],\]
 where $\{k_1, \ldots, k_n\} = \{j_1, \ldots, j_n\}$ and $k_1 \leq \ldots \leq k_n$.
 
 Let $A_m$ be the subspace spanned by all possible $\ell(x_{j_1}, \ldots, x_{j_n})$ with $n \leq m$. Notice that $\theta$ takes the set of the
 $\ell(x_{j_1}, \ldots, x_{j_m})$ to a basis of $Aug(\mc{U}(\mf{g}))$, as described by the Poincaré-Birkhoff-Witt theorem.
 Thus to show that $\theta$ is injective, it is enough to prove that $\mc{L} = \sum_m A_m$. To see that this is the case we use the following claim, which 
 we prove by induction.
 
 \textbf{Claim:} For any $k, j_1, \ldots, j_n \in J$, there is some $a \in A_n$ such that 
 \[[x_k, \ell(x_{j_1}, \ldots, x_{j_n})] =a + \ell(x_k, x_{j_1}, \ldots, x_{j_n}).\]
 
 Suppose $n=1$. If $k \leq j_1$, then $[x_k, \ell(x_{j_1})] = \ell(x_k, x_{j_1})$ by definition. Otherwise, notice that 
 \[ [x_k, \ell(x_{j_1})] = [x_k^{\psi}, \ell(x_{j_1})] = [x_k^{\psi}, x_{j_1}-x_{j_1}^{\psi}] = [x_{j_1},x_k-x_k^{\psi}] + ([x_k,x_{j_1}] - [x_k,x_{j_1}]^{\psi})  \]	
 The first equality comes from the fact that $[\mc{L}, \mc{L}]=0$. But $[x_{j_1},x_k-x_k^{\psi}] = \ell(x_k, x_{j_1})$ and $[x_k,x_{j_1}] - [x_k,x_{j_1}]^{\psi}$
 lies in $A_1$, as we can see by rewriting $[x_k,x_{j_1}]$ in terms of the basis elements of $\mf{g}$. So we are done in this case.
 
 Now suppose that $n> 1$ and that the claim holds for smaller values of $n$. Let $k, j_1, \ldots, j_n \in J$ with $j_1 \leq \ldots \leq j_n$. If
 $k \leq j_1$, then $[x_k, \ell(x_{j_1}, \ldots, x_{j_n})] = \ell(x_k, x_{j_1}, \dots, x_{j_n})$ by definition. Otherwise, by the Jacobi identity we have:
  \[ [x_k, \ell(x_{j_1}, \ldots, x_{j_n})] = [[x_k,x_{j_1}], [x_{j_2}, \ldots, x_{j_n}-x_{j_n}^{\psi}]] + [x_{j_1}, [x_k, [x_{j_2}, \ldots, x_{j_n}-x_{j_n}^{\psi}]]] \]
 Let $[x_k,x_{j_1}] = \sum_{\alpha} \lambda_{\alpha} x_{\alpha}$, with $\lambda_{\alpha} \in K$ and $\alpha \in J$. Then 
 \[[[x_k,x_{j_1}], [x_{j_2}, \ldots, x_{j_n}-x_{j_n}^{\psi}]] = \sum_{\alpha} \lambda_{\alpha} [x_{\alpha}, \ell(x_{j_2}, \ldots, x_{j_n})]=:a.\]
 By induction hypothesis $a \in A_n$. Also by induction hypothesis we can write 
 \[ [x_k, [x_{j_2}, \ldots, x_{j_n}-x_{j_n}^{\psi}]] = b + \ell(x_k, x_{j_2}, \ldots, x_{j_n}), \]
 for some $b \in A_{n-1}$. Notice that $[x_{j_1},b] \in A_n$ (again by induction hypothesis) and 
 $ [x_{j_1}, \ell(x_k, x_{j_2}, \ldots, x_{j_n})] = \ell(x_{j_1}, x_k, x_{j_2}, \ldots, x_{j_n})=\ell(x_k, x_{j_1}, \ldots, x_{j_n})$,
 since $j_1 <k$ and $j_1 \leq j_t$ for $2 \leq t \leq n$. Thus:
  \[ [x_k, \ell(x_{j_1}, \ldots, x_{j_n})] = a + [x_{j_1}, b] + \ell(x_k, x_{j_1}, \ldots, x_{j_n}),\]
  with  $a + [x_{j_1}, b] \in A_n$, as we wanted. The claim is proved.
  
  Finally, $\mc{L} = \sum_n A_n$. Indeed, it is clear that $\mc{L} \supseteq \sum_n A_n$. For the reverse inclusion, recall 
  that $\mc{L}$ is linearly spanned by the brackets $w = [x_{j_1}, \ldots, x_{j_{n-1}}, x_{j_n}-x_{j_n}^{\psi}]$ 
  with $j_1, \ldots, j_n \in J$. It follows by the claim that $w$ and $\ell(x_{j_1}, \ldots, x_{j_{n-1}}, x_{j_n})$ differ only by an 
  element of $A_n$, and $\ell(x_{j_1}, \ldots, x_{j_{n-1}}, x_{j_n}) \in A_n$ by definition, so we are done. 
 \end{proof}

 So $\theta: \Delta \to \Gamma$ is an isomorphism. Now we introduce the relations of $\chi(\mf{g})$. Notice that for $x \in \mf{g}$ we have
 \[ \theta([x,x^{\psi}]) = [(0,x),(-x,x)] = (-x^2,0).\]
 Denote by $I$ the smallest ideal of $Aug(\mc{U}(\mf{g})) \rtimes \mf{g}$ containing $(x^2,0)$ for all $x \in \mf{g}$.  Notice that 
 $I \subseteq Aug(\mathcal{U}(\mf{g}))$ and that $I$ is invariant by the action of $\mf{g}$, so the semi-direct 
 product $\frac{Aug(\mathcal{U}(\mf{g}))}{I} \rtimes \mf{g}$ still makes sense. It follows that $\theta$ induces an isomorphism
 \[ \bar{\theta}: \frac{\chi(\mf{g})}{L'} \to \frac{Aug(\mathcal{U}(\mf{g}))}{I} \rtimes \mf{g},\]
 and restriction induces an isomorphism of $L/L'$ and $Aug(\mathcal{U}(\mf{g}))/I$.
 
 \begin{rem}
  Notice that $I$ is the two-sided ideal of $\mathcal{U}(\mf{g})$, \textit{as an associative algebra}, generated by the elements $x^2$, for 
 $x \in \mf{g}$. This follows by the fact that $I$ is closed by left multiplication by elements of $\mf{g}$, by construction, but also by right 
 multiplication, which can be deduced from the identity \eqref{eqcomm1} in the proof below.
 \end{rem}

 \begin{prop} \label{Labfindim}
 If $\mf{g}$ is a finitely generated Lie algebra, then $L/L'$ is finite dimensional.
 \end{prop}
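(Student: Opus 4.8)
The plan is to exploit the isomorphism $L/L' \simeq Aug(\mathcal{U}(\mf{g}))/I$ just obtained, where, as noted in the remark above, $I$ is the two-sided ideal of $\mathcal{U}(\mf{g})$ (as an associative algebra) generated by the squares $x^2$ for $x \in \mf{g}$, and to prove directly that $Aug(\mathcal{U}(\mf{g}))/I$ is finite dimensional when $\mf{g}$ is finitely generated. Throughout I write $R := \mathcal{U}(\mf{g})/I$ and denote by $\overline{u}$ the image in $R$ of an element $u \in \mathcal{U}(\mf{g})$.

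First I would extract the relevant relations in $R$. Polarizing $(x+y)^2 \in I$ shows $xy+yx \in I$ for all $x,y \in \mf{g}$, hence $\overline{x}\,\overline{y} = -\overline{y}\,\overline{x}$ in $R$; combining this with the defining relation $xy - yx = [x,y]$ of $\mathcal{U}(\mf{g})$ and dividing by $2$ (here $char(K)\neq 2$ is used) gives the key identity
\[ \overline{x}\,\overline{y} = \tfrac12\,\overline{[x,y]} \qquad \text{for all } x,y \in \mf{g}. \]
In particular the image $\overline{\mf{g}}$ of $\mf{g}$ in $R$ is closed under the multiplication of $R$, so an easy induction on the length of a monomial shows that every product of elements of $\mf{g}$ already lies in $\overline{\mf{g}}$; since $Aug(\mathcal{U}(\mf{g}))$ is spanned by products of at least one element of $\mf{g}$, this yields $Aug(\mathcal{U}(\mf{g}))/I = \overline{\mf{g}}$.

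The decisive point is that $\overline{\mf{g}}$ is itself small. Applying the identity above twice, in the two possible ways, to a triple product — once as $\overline{x}(\overline{y}\,\overline{z})$ and once as $(\overline{x}\,\overline{y})\overline{z}$ — one gets $\overline{[x,[y,z]]} = \overline{[[x,y],z]}$ for all $x,y,z\in\mf{g}$. By the Jacobi identity in the form $[x,[y,z]] - [[x,y],z] = [y,[x,z]]$ this forces $\overline{[y,[x,z]]} = 0$, that is, $\gamma_3(\mf{g}) := [\mf{g},[\mf{g},\mf{g}]] \subseteq I$. Consequently $\overline{\mf{g}}$ is a homomorphic image of $\mf{g}/\gamma_3(\mf{g})$.

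To finish: if $\mf{g}$ is generated by $x_1,\dots,x_n$, then $\mf{g}/\gamma_3(\mf{g})$ is a nilpotent Lie algebra of class at most $2$ on $n$ generators, hence finite dimensional — its abelianization has dimension at most $n$, and $[\mf{g},\mf{g}]/\gamma_3(\mf{g})$ is spanned by the brackets $[x_i,x_j]$ with $i<j$ — and one even gets the explicit bound $\dim(L/L') = \dim\overline{\mf{g}} \le \dim\mf{g}/\gamma_3(\mf{g}) \le n + \binom{n}{2}$. I expect the only genuinely non-routine step to be recognizing that comparing the two bracketings of a triple product collapses $\gamma_3(\mf{g})$ into $I$; the bookkeeping identifying $Aug(\mathcal{U}(\mf{g}))/I$ with $\overline{\mf{g}}$ needs a little care but is otherwise straightforward, and the nilpotency argument at the end is standard.
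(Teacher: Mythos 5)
Your proof is correct, and it shares the paper's overall framework --- both arguments pass through the identification $L/L'\simeq Aug(\mathcal{U}(\mf{g}))/I$ and both begin by using $\overline{x}\,\overline{y}=\tfrac12\overline{[x,y]}$ to show that the image of $\mf{g}$ spans $Aug(\mathcal{U}(\mf{g}))/I$ --- but your concluding step is genuinely different. The paper finishes by expanding an arbitrary bracket in the generators $x_1,\dots,x_n$ associatively, anticommuting the variables into the sorted form $\mu x_1^{t_1}\cdots x_n^{t_n}$, and observing that any monomial with a repeated variable dies modulo $I$, so only (multilinear) brackets of length at most $n$ survive. You instead compare the two bracketings of a triple product to get $\overline{[x,[y,z]]}=\overline{[[x,y],z]}$, whence by Jacobi $\gamma_3(\mf{g})\subseteq I$, so the image of $\mf{g}$ is a linear quotient of the class-$2$ nilpotent algebra $\mf{g}/\gamma_3(\mf{g})$. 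Your route is cleaner, yields the sharper explicit bound $\dim(L/L')\le n+\binom{n}{2}$, and in fact re-derives on the associative side exactly the content of the paper's Remark \ref{mnu} (that $m-m^{\psi}\in L'$ for every monomial $m$ of degree at least $3$), which the paper itself notes gives a simpler proof of the proposition. The one point to flag explicitly is that your argument leans on the associativity of $\mathcal{U}(\mf{g})/I$, i.e.\ on $I$ being a two-sided associative ideal rather than merely the left ideal it is by construction; this is exactly what the remark preceding the proposition supplies, so citing it (as you do) closes that gap.
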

 
 \begin{proof}
 We must show that $Aug(\mathcal{U}(\mf{g}))/I$ is finite dimensional. First notice that for any $x,y \in \mf{g}$, the following holds modulo $I$:
 \[0 = (x+y)^2 = x^2 + xy + yx + y^2 = xy + yx,\]
 so
 \begin{equation} \label{eqcomm1}
 xy= -yx  \hbox{ (mod } I)  
 \end{equation}
 for all $x, y \in \mf{g}$. Also
 \begin{equation} \label{eqcomm2}
 [x,y] = xy-yx = 2xy \hbox{ (mod } I) 
 \end{equation}
 for all $x,y \in \mf{g}$.
 
 Notice that $Aug(\mathcal{U}(\mf{g}))/I$ is generated as a vector space by the image of $\mf{g}$ under the projection 
 $Aug(\mathcal{U}(\mf{g})) \twoheadrightarrow Aug(\mathcal{U}(\mf{g}))/I$. Indeed for any monomial $x_1 \cdots x_n$, with $n \geq 2$, we have
 by \eqref{eqcomm2}
 \[ x_1 \cdots x_n  =  x_1 \cdots x_{n-2} (\frac{1}{2} [x_{n-1},x_n]) = x_1 \cdots x_{n-2}x_{n-1}' \hbox{ (mod } I) .\]
 where $x_{n-1}' = \frac{1}{2}[x_{n-1},x_n]$. By induction we see that $x_1 \cdots x_n$ is congruent modulo $I$ to 
 $\frac{1}{2^{n-1}}[x_1, \ldots, x_n]$ (again a right-normed bracket), which is the image of an element of $\mf{g}$.
 
 Now suppose that $\mf{g} = \langle x_1, \ldots, x_n \rangle$. Then any $x \in \mf{g}$ can be written as a linear combination of arbitrary brackets
 of any length involving the $x_i$'s. If $c$ is any of these brackets, then writing it associatively in $Aug(\mathcal{U}(\mf{g}))$ we obtain a sum of elements of the form
 \[ \lambda x_{i_1} \cdots x_{i_m} \]
 with $\lambda \in K$, $i_j \in \{1, \ldots, n\}$ and $m \geq 1$. By commuting the variables (using identity \eqref{eqcomm1}), we get that, modulo
 $I$, $c$ is a sum 
 of elements of the form
 \[ \mu x_1^{t_1} \cdots x_n^{t_n} \]
 with $\mu \in K$ and $t_j \geq 0$ for all $j$. By the definition of $I$, this term is $0$ in $Aug(\mathcal{U}(\mf{g}))/I$ if $t_j \geq 2$
 for some $j$. It follows that  $Aug(\mathcal{U}(\mf{g}))/I$ is actually generated by the arbitrary brackets involving $x_1, \ldots, x_n$ such that 
 no $x_j$ is repeated. Thus the brackets of length not greater than $n$ are sufficient, whence $Aug(\mathcal{U}(\mf{g}))/I$ is finite dimensional.
 \end{proof}

This completes the proof of Theorem \ref{T2}.

\begin{rem} \label{mnu}
 Notice that for any $u,v,w \in \mf{g}$, the following holds in $\chi(\mf{g})$:
 \[[u,[v,w]]-[u^{\psi},[v^{\psi},w^{\psi}]] =  [u-u^{\psi}, [v-v^{\psi}, w -w^{\psi}]].\]
 This can be proved using the identities of the form $[u,v^{\psi}]=[u^{\psi},v]$ and $[D,L]=0$. Now, if $\mf{g} = \langle x_1, \ldots x_n\rangle$, 
 then $L/L'$ is generated by the elements $x_i-x_i^{\psi}$ and $[x_i,x_j]-[x_i,x_j]^{\psi}$ with $1 \leq i < j \leq n$, since
 $m-m^{\psi} \in L'$ for any monomial $m$ of degree at least $3$. This gives a simpler proof of Proposition \ref{Labfindim}, but we decided to keep 
 the one above because it offers extra insight on the structure of $\chi(\mf{g})$.
\end{rem}

\end{section}

\begin{section}{Finiteness properties of \texorpdfstring{$\chi(\mf{g})$}{chi(g)}}

  In the last section we showed that $L/L'$ is of finite dimension as soon as $\mf{g}$ is finitely generated, but it turns out that actually $L$
  is already finitely generated. Consider the exact sequence given by
  \[ 0 \to W  \to L \to \rho(L) \to 0.\]
  Notice that $\rho(L)$ is finitely generated. Indeed, if $x_1, \ldots, x_n$ generate $\mf{g}$, then the elements $(x_i,0,-x_i)$ and $([x_j,x_k],0,0)$ for $1 \leq i \leq n$
  and $1 \leq j < k \leq n$ generate $\rho(L)$. 
 
 \begin{prop}
  If $\mf{g}$ is finitely generated, then $L=L(\mf{g})$ is finitely generated.
 \end{prop}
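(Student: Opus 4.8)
The plan is to combine the finite generation of $\rho(L)$, recorded just before the statement, with the finite-dimensionality of $L/L'$ from Proposition \ref{Labfindim}. I first point out why both ingredients are needed: a central extension of a finitely generated Lie algebra need not be finitely generated, so the exact sequence $0 \to W \to L \to \rho(L) \to 0$ on its own does not suffice — the extra control on $L/L'$ is essential.

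First I would pick finitely many elements $z_1,\dots,z_r \in L$ whose images under $\rho$ generate $\rho(L)$ (these exist since $\rho(L)$ is finitely generated), and let $L_0 = \langle z_1,\dots,z_r\rangle$ be the finitely generated subalgebra of $L$ they generate. Since $\rho$ carries $L_0$ onto $\rho(L)$ and $W = L \cap \ker \rho = \ker(\rho|_L)$, we get $L = L_0 + W$.

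Next I would use that $W$ is central in $L$ (it is central in $L+D$ by the lemma $[D,L]=0$ together with the fact that $W$ is abelian, and $L \subseteq L+D$). Consequently
\[ L' = [L_0 + W,\, L_0 + W] = [L_0,L_0] = L_0' \subseteq L_0 . \]
Hence the quotient map $L \to L/L_0$ factors through $L \to L/L'$, so $L/L_0$ is a quotient vector space of $L/L'$; the latter is finite-dimensional by Proposition \ref{Labfindim} because $\mf{g}$ is finitely generated, and therefore $L/L_0$ is finite-dimensional. Choose $v_1,\dots,v_s \in L$ whose images form a basis of $L/L_0$.

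Finally I claim $\{z_1,\dots,z_r,v_1,\dots,v_s\}$ generates $L$: the subalgebra $L_1$ it generates contains $L_0$ and every $v_j$, so any $x \in L$ can be written $x = \sum_j \lambda_j v_j + y$ with $y \in L_0 \subseteq L_1$, giving $x \in L_1$; thus $L_1 = L$. The only place where care is required is the identity $L' = [L_0,L_0]$, which relies on $W$ being central in all of $L$ (not merely abelian); beyond that the argument is purely formal, so I do not expect a real obstacle here.
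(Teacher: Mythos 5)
Your proof is correct and follows essentially the same route as the paper: both arguments combine the finite generation of $\rho(L)$, the centrality of $W$ in $L$ (giving $L=L_0+W$ and $L'=[L_0,L_0]\subseteq L_0$), and the finite-dimensionality of $L/L'$ from Proposition \ref{Labfindim}. The paper phrases the last step via the extension $\langle T\rangle \rightarrowtail L \twoheadrightarrow L/\langle T\rangle$ with abelian finite-dimensional quotient, whereas you simply adjoin a finite basis of $L/L_0$; this is only a cosmetic difference.
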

 
 \begin{proof}
 Let $T \subseteq L$ be a finite set whose image generates $\rho(L)$. The fact that $W$ is central implies that the subalgebra 
 $\langle T \rangle$ is actually an ideal. Indeed, any $\ell \in L$ is written as $\ell = \tau + w$ for some $\tau \in \langle T \rangle$ and $w \in W$,
 so 
 \[ [t,\ell] = [t,\tau] + [t,w] = [t,\tau] \in \langle T \rangle\]
 for any $t \in T$.
 
 The quotient $L/\langle T \rangle$ is the image of $W$ by the canonical projection, therefore is abelian. But then it is also
 finitely dimensional, being a quotient of $L/L'$. The exactness of $\langle T \rangle \rightarrowtail L \twoheadrightarrow L/\langle T \rangle$ implies that
 $L$ is itself finitely generated.
 \end{proof}

\subsection{Finite presentability}
The first step towards a proof of the first part of Theorem \ref{T3} is to establish it for free Lie algebras.

\begin{prop}  \label{fpfree}
If $\mf{f}$ is a free Lie algebra of finite rank, then $\chi(\mf{f})$ is finitely presented.
\end{prop}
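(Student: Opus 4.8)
The plan is to exhibit an explicit finite presentation of $\chi(\mf{f})$. Write $\mf{f}$ as the free Lie algebra on $x_1,\dots,x_n$ and $\mf{F}$ for the free Lie algebra on $x_1,\dots,x_n,x_1^{\psi},\dots,x_n^{\psi}$; then $\chi(\mf{f}) = \mf{F}/\mf{j}$, where $\mf{j}$ is the ideal of $\mf{F}$ generated by all brackets $[w,w^{\psi}]$ with $w\in\mf{f}$. Since $\mf{F}$ is a free Lie algebra on finitely many generators, it suffices to show that $\mf{j}$ is finitely generated as an ideal. First I would reduce the generating set: because $char(K)\neq 2$, a polarization argument (as in the proof of Lemma \ref{Lgenalg}) shows that $\mf{j}$ is already generated by the symmetric brackets $[u,v^{\psi}]+[v,u^{\psi}]$ with $u,v\in\mf{f}$, and by bilinearity one may take $u,v$ among Lie monomials in the $x_i$. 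The heart of the proposition is that only finitely many of these monomial relations are needed — the degree-one ones $[x_i,x_j^{\psi}]+[x_j,x_i^{\psi}]$ for $1\le i\le j\le n$, together with the relations $[[x_i,x_j^{\psi}],x_k-x_k^{\psi}]$ coming from $[D,L]=0$.

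So let $\mf{j}_0\subseteq\mf{F}$ be the ideal generated by this finite set, and write $\equiv$ for congruence modulo $\mf{j}_0$. The degree-one relations immediately give $2[x_i,x_i^{\psi}]\equiv 0$ and the ``bar identity'' $[x_i,x_j^{\psi}]\equiv[x_i^{\psi},x_j]$ on generators. The main step is to propagate this to all of $\mf{f}$: by induction on $\deg u+\deg v$, establish $[u,v^{\psi}]\equiv[u^{\psi},v]$ for all Lie monomials $u,v$, running exactly the Jacobi-identity computations from the proof of Lemma \ref{Lgenalg} and from the proof of $[D,L]=0$ (in particular the analogue of identity \eqref{id1}), now carried out modulo $\mf{j}_0$ and fed by the $[D,L]$-type relations. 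Once the bar identity holds in full, setting $u=v$ gives $[u,u^{\psi}]\equiv 0$, hence $\mf{j}\subseteq\mf{j}_0$ and $\mf{j}=\mf{j}_0$, so $\chi(\mf{f})=\mf{F}/\mf{j}_0$ is finitely presented; the same argument shows it is of type $FP_2$. An equivalent and perhaps cleaner packaging of the same induction is to realize $\chi(\mf{f})$ via Wasserman's HNN-extension construction, building it up from the trivial small-rank cases by finitely many HNN extensions along finite-dimensional subalgebras, and invoking that HNN extensions of finitely presented Lie algebras over finitely generated subalgebras are again finitely presented.

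The hard part is making the induction close. A naive induction that uses only the symmetric degree-one relations is circular: writing $v=[c,d]$, expanding $[u,v^{\psi}]-[u^{\psi},v]$ by the Jacobi identity, substituting the inductive bar identities for the pairs $(u,c)$ and $(u,d)$, and simplifying returns precisely $[u,v^{\psi}]-[u^{\psi},v]$ again, yielding no new information. Breaking this circularity is exactly where the additional $[D,L]$-relations (or, equivalently, the structural HNN decomposition) are essential, and this is the step that parallels the corresponding argument of \cite{BridsonKoch}. Everything else — the polarization reduction and the final passage from ``$\mf{j}$ finitely generated as an ideal'' to ``$\chi(\mf{f})$ finitely presented'' — is routine.
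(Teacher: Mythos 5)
There is a genuine gap, and you have located it yourself: the entire content of the proposition is the claim that your finite set of relations (the degree-two polarized relations together with the degree-three relations $[[x_i,x_j^{\psi}],x_k-x_k^{\psi}]$) implies the bar identity $[u,v^{\psi}]\equiv[u^{\psi},v]$ for monomials of arbitrary degree. You observe that the naive induction is circular, assert that the $[D,L]$-type relations break the circularity, and stop. But they do not obviously do so: already in bidegree $(2,2)$ the Jacobi expansion produces terms of the form $[[x_i,x_j^{\psi}],m-m^{\psi}]$ with $m$ a monomial of degree $2$, i.e.\ instances of $[D,L]=0$ for \emph{higher-degree} elements of $L$, and attempting to reduce these by Jacobi again regenerates terms of the same shape. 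So the induction has not been closed, no well-founded induction scheme (on what quantity, exactly?) has been exhibited, and it is not even verified that your candidate ideal $\mf{j}_0$ equals $\mf{j}$. The HNN alternative is equally unsubstantiated: no HNN decomposition of $\chi(\mf{f})$ is produced, and the preservation statement you invoke is not established in the paper (Wasserman's construction is used only implicitly, for Lemma \ref{fpFP2}).

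The paper takes a different route that sidesteps this combinatorial difficulty entirely, and you should note the two inputs it uses that your argument never touches. First, $\chi(\mf{f})/W\simeq Im(\rho)$ is finitely presented: this is Proposition \ref{Imrhofp}, which rests on the subdirect-sum results of Kochloukova and Mart{\'i}nez-P{\'e}rez (their Corollary D1), not on any manipulation of defining relations. Second, one builds a finitely presented approximation $\Gamma$ of $\chi(\mf{f})$ whose relations record: generators of $L$ and of $D$ as an ideal, the finitely many relators of $Im(\rho)$ (as new central generators $w_i$), and $[D_0,L_0]=0$ on generators. The kernel of $\Gamma\twoheadrightarrow\chi(\mf{f})$ then lies in the central ideal $W_0$, which is a finitely generated module over $\mathcal{U}(\Gamma/(L_0+D_0))\simeq\mathcal{U}(\mf{f}/\mf{f}')$; since $\mf{f}/\mf{f}'$ is finite dimensional this enveloping algebra is noetherian, so the kernel is finitely generated and $\chi(\mf{f})$ is finitely presented. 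The noetherian step is what replaces the induction you could not close: one never needs to know \emph{which} relations generate the kernel, only that finitely many do. If you want to salvage your approach, you would need either to prove your induction terminates (and I see no mechanism for that with only degree $\le 3$ relations) or to import these two structural inputs.
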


\begin{proof}
Recall that $\chi(\mf{f}) \simeq L \rtimes \mf{f}$.
Let $\{x_1, \ldots, x_m\}$ be a free basis for $\mf{f}$ and let $L = \langle \ell_1, \ldots, \ell_n \rangle$. Notice that 
$\chi(\mf{f})/D \simeq \mf{f} \oplus \mf{f}$,
which is a finitely presented Lie algebra. Since $\chi(\mf{f})$ is finitely generated, $D$ is finitely generated as an ideal. Let
$D = \langle \langle d_1, \ldots, d_s \rangle \rangle$. Each $d_i$ can be written as a sum of brackets involving the generators 
$\ell_1, \ldots, \ell_n, x_1, \ldots, x_m$;
we denote by $\delta_i$ one of such sums.

Similarly, $\chi(\mf{f})/W \simeq Im(\rho)$ is finitely presented by Proposition \ref{Imrhofp},
thus we can write
\[Im(\rho) = \langle \ell_1, \ldots, \ell_n, x_1, \ldots, x_m \hbox{ } | \hbox{ } \tau_1, \ldots, \tau_k \rangle\]
for some $\tau_i$'s. If we denote by $F$ the free lie algebra on $\{\ell_1, \ldots, \ell_n, x_1, \ldots, x_m\}$, then the obvious homomorphism
$F \twoheadrightarrow Im(\rho)$ is injective on the subalgebra generated by $\{x_1, \ldots, x_m\}$, as $\mf{f}$ is free on this set. 
It follows that each $\tau_i$ is an element of the ideal of $F$ generated by $\{\ell_1, \ldots, \ell_n\}$. 

Finally, since $L$ is an ideal, we can choose words $\mu_{i,j}$ in $\ell_1, \ldots, \ell_n$ representing $[x_i, \ell_j]$, for each $i,j$.

Let $\Gamma$ be the Lie algebra generated by the symbols $\ell_i, x_i, d_i, w_j$, where $i$ runs through the appropriate indices and $1 \leq j \leq k$, subject to the following defining relations:
\begin{enumerate}
 \item $d_i=\delta_i$ for $1 \leq i \leq s$;
 \item $w_i= \tau_i$ for $1 \leq i \leq k$;
 \item $[x_i,\ell_j] = \mu_{i,j}$ for $1 \leq i \leq m$ and $1 \leq j \leq n$;
 \item $[d_i, \ell_j]=0$ for $1 \leq i \leq s$ and $1 \leq j \leq n$;
 \item $[w_i,\ell_j]=0$ for $1 \leq i \leq k$ and $1 \leq j \leq n$.
\end{enumerate}

Denote by $L_0$ the subalgebra of $\Gamma$ generated by $\{\ell_i| 1 \leq i \leq n\}$, by $D_0$ the ideal generated by 
$\{d_i | 1 \leq i \leq s\}$ and by
$W_0$ the ideal generated by $\{w_i | 1 \leq i \leq k\}$. Notice that $L_0$ is actually an ideal of $\Gamma$, by the relations of types 3, 4 and 5, together with the definition of the words $\mu_{i,j}$. The relations of type 4 imply that $[D_0,L_0] = 0$. From the relations of type 5 we conclude that $W_0$ commutes with $L_0$, while the relations of the types 2, 3 and 4 imply that $W_0$ is commutes with $D_0$, since each $w_i$ represents an element of the ideal generated by $\ell_1, \ldots, \ell_n$, that is, the subalgebra $L_0 \subseteq \Gamma$, which commutes with $D_0$. So $W_0$ is central in $L_0+D_0$, and in particular it is an abelian ideal of $\Gamma$.

It is clear that there is a well-defined surjective homomorphism $\phi: \Gamma \to \chi(\mf{f})$ that takes the generators of $\Gamma$ to the corresponding
elements in $\chi(\mf{f})$. The choice of $\tau_1, \ldots, \tau_k$ implies that $\phi$ induces an isomorphism $\Gamma/W_0 \simeq \chi(\mf{f})/W$. Thus $ker(\phi) \subseteq W_0$.
Also $\phi(L_0) = L$ and $\phi(D_0)=D$, and since $W_0 \subseteq L_0 + D_0$, we have
\[\frac{\Gamma}{L_0 + D_0} \simeq \frac{\chi(\mf{f})}{L+D} \simeq \frac{\mf{f}}{\mf{f}'}.\]

Now $W_0$ is a module over the universal enveloping algebra of $\Gamma/(L_0+D_0) \simeq \mf{f}/\mf{f}'$, and it is generated by $w_1, \ldots, w_k$.
The fact that $\mathcal{U}( \mf{f}/\mf{f}')$ is noetherian implies that $ker(\phi)$, being a submodule of $W_0$, is finitely generated too. But
then $\chi(\mf{f}) \simeq \Gamma/ker(\phi)$ is finitely presented.
\end{proof}

\begin{crlr}  \label{chifp}
If $\mf{g}$ is finitely presented, then so is $\chi(\mf{g})$. 
\end{crlr}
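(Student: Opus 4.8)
The plan is to reduce to the free case already settled in Proposition \ref{fpfree}, using the functoriality of the construction and the right-exactness of the free Lie sum. Since $\mf{g}$ is finitely presented, we may write $\mf{g} = \mf{f}/\mf{r}$ with $\mf{f}$ a free Lie algebra of finite rank and $\mf{r} = \langle \langle r_1, \ldots, r_k \rangle \rangle$ the ideal generated by finitely many relators. The surjection $\mf{f} \twoheadrightarrow \mf{g}$ induces, by right-exactness of the coproduct in each variable separately, a surjection of free Lie sums $q\colon \mf{f} \ast \mf{f}^{\psi} \twoheadrightarrow \mf{g} \ast \mf{g}^{\psi}$ whose kernel $M$ is the ideal generated by the images of $\mf{r}$ and of $\mf{r}^{\psi}$.

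Next I would pass to the weak-commutativity quotients. Writing $\mf{c}_{\mf{f}}$ and $\mf{c}_{\mf{g}}$ for the ideals generated by the elements $[y,y^{\psi}]$ in $\mf{f}\ast\mf{f}^{\psi}$ and in $\mf{g}\ast\mf{g}^{\psi}$ respectively, one has $q(\mf{c}_{\mf{f}}) = \mf{c}_{\mf{g}}$: every $x \in \mf{g}$ is the image of some $y \in \mf{f}$, so $[x,x^{\psi}]$ is the image of $[y,y^{\psi}]$. Hence $q$ descends to a surjection $\bar q\colon \chi(\mf{f}) \twoheadrightarrow \chi(\mf{g})$, and a short diagram chase gives $q^{-1}(\mf{c}_{\mf{g}}) = \mf{c}_{\mf{f}} + M$, so that $\ker \bar q$ is exactly the image of $M$ in $\chi(\mf{f})$, i.e. the ideal of $\chi(\mf{f})$ generated by the images of $\mf{r}$ and $\mf{r}^{\psi}$. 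Because $\mf{r}$ is generated as an ideal of $\mf{f}$ by $r_1, \ldots, r_k$, every element of $\mf{r}$ is a linear combination of iterated right-normed brackets $[r_i, f_1, \ldots, f_m]$ with $f_j \in \mf{f} \subseteq \chi(\mf{f})$; such brackets lie in the ideal of $\chi(\mf{f})$ generated by $r_1, \ldots, r_k$, and conversely. Thus the ideal of $\chi(\mf{f})$ generated by the image of $\mf{r}$ coincides with the one generated by $r_1, \ldots, r_k$ alone, and similarly for $\mf{r}^{\psi}$. Therefore $\ker \bar q = \langle \langle r_1, \ldots, r_k, r_1^{\psi}, \ldots, r_k^{\psi} \rangle \rangle$ in $\chi(\mf{f})$, a finitely generated ideal.

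Finally I would conclude: by Proposition \ref{fpfree} the Lie algebra $\chi(\mf{f})$ is finitely presented, and a quotient of a finitely presented Lie algebra by an ideal that is finitely generated as an ideal is again finitely presented. Since $\chi(\mf{g}) \simeq \chi(\mf{f})/\ker\bar q$, this finishes the proof.

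The substantive work is entirely in Proposition \ref{fpfree}; here there is no real obstacle. The only point requiring a little care is the bookkeeping in the diagram chase — correctly identifying $\ker \bar q$ with the image of $M$, which rests on the equality $q^{-1}(\mf{c}_{\mf{g}}) = \mf{c}_{\mf{f}} + M$ — together with the verification that passing from $\mf{r}$ to its finitely many ideal generators does not enlarge the ideal they generate inside $\chi(\mf{f})$.
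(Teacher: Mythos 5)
Your proof is correct and follows essentially the same route as the paper: write $\mf{g}=\mf{f}/\langle\langle r_1,\ldots,r_k\rangle\rangle$, invoke Proposition \ref{fpfree} for $\chi(\mf{f})$, and observe that $\chi(\mf{g})\simeq \chi(\mf{f})/\langle\langle r_1,\ldots,r_k,r_1^{\psi},\ldots,r_k^{\psi}\rangle\rangle$ is a quotient of a finitely presented Lie algebra by a finitely generated ideal. The paper states this isomorphism without proof; your diagram chase simply supplies the justification it leaves implicit.
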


\begin{proof}
 Let $\mf{g} = \mf{f}/\langle \langle R \rangle \rangle$, where $\mf{f}$ is a free Lie algebra of finite rank and $R$ is a finite set.
 By Proposition \ref{fpfree} we know that $\chi(\mf{f})$ is finitely presented. But then
 \[ \chi(\mf{g}) \simeq \chi(\mf{f})/ \langle \langle r, r^{\psi}; \hbox{ for }r \in R \rangle \rangle\]
 is also finitely presented. 
\end{proof}

The converse to the corollary above is clearly also true since $\mf{g}$ is a retract of $\chi(\mf{g})$. 

\subsection{Property \texorpdfstring{$FP_2$}{FP2}}
The version of this result for the property $FP_2$ can be obtained from the lemmas in Section \ref{sec2}. 

\begin{prop}
If $\mf{g}$ is of type $FP_2$, then so is $\chi(\mf{g})$.
\end{prop}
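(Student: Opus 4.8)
The plan is to mimic the strategy used in Proposition \ref{fpfree}, but replacing the explicit construction of a finite presentation by the homological characterization of $FP_2$ from Lemma \ref{fpFP2}. First I would reduce to the case where $\mf{g}$ is itself finitely presented: by Lemma \ref{fpFP2}, if $\mf{g}$ is of type $FP_2$ then $\mf{g} \simeq \mf{h}/\mf{r}$ with $\mf{h}$ finitely presented and $\mf{r} = \mf{r}'$. Applying the functor $\chi$ and the obvious exactness properties of the construction, we get $\chi(\mf{g}) \simeq \chi(\mf{h})/\mf{n}$ where $\mf{n} = \langle\langle \{r, r^{\psi} : r \in \mf{r}\}\rangle\rangle$ is the ideal of $\chi(\mf{h})$ generated by the two copies of $\mf{r}$. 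By Corollary \ref{chifp}, $\chi(\mf{h})$ is finitely presented. So it suffices to show that $\mf{n} = \mf{n}'$, since then Lemma \ref{fpFP2} again gives that $\chi(\mf{h})/\mf{n} = \chi(\mf{g})$ is of type $FP_2$.

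The main work is therefore to prove $\mf{n} = \mf{n}'$, i.e.\ that the ideal of $\chi(\mf{h})$ generated by $\mf{r} \cup \mf{r}^{\psi}$ is perfect, knowing that $\mf{r} = \mf{r}'$. Here I expect to use the perfectness of $\mf{r}$ in both copies together with the defining relations $[x,y^{\psi}] = [x^{\psi},y]$ and the structure of $\chi$. The idea: since $\mf{r} = [\mf{r},\mf{r}]$ and $\mf{r}^{\psi} = [\mf{r}^{\psi},\mf{r}^{\psi}]$, the generators $r$ and $r^{\psi}$ of $\mf{n}$ already lie in $[\mf{n},\mf{n}]$; one then has to check that the whole ideal they generate — which also involves brackets $[z, r]$ and $[z, r^{\psi}]$ for arbitrary $z \in \chi(\mf{h})$ — stays inside $\mf{n}'$. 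For this, write $r = \sum [a_i, b_i]$ with $a_i, b_i \in \mf{r}$; then $[z,r] = \sum [z,[a_i,b_i]] = \sum \big([[z,a_i],b_i] + [a_i,[z,b_i]]\big)$ by the Jacobi identity, and since $[z,a_i], [z, b_i] \in \mf{n}$ (as $\mf{n}$ is an ideal) while $a_i, b_i \in \mf{r} \subseteq \mf{n}$, each term lies in $[\mf{n},\mf{n}] = \mf{n}'$. The same computation handles $[z, r^{\psi}]$. Iterating, every element of $\mf{n}$ is a sum of brackets of elements of $\mf{n}$, so $\mf{n} = \mf{n}'$.

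Alternatively, and perhaps more cleanly, I would phrase this via the general observation: if $\mf{r}$ is a perfect ideal of a Lie algebra $\mf{h}$ and $F \colon \mf{h} \to \chi(\mf{h})$ denotes the natural inclusion of the first copy (and similarly $\psi$ for the second), then the $\chi(\mf{h})$-ideal generated by a perfect subalgebra contained in an ideal is again perfect — this is a soft lemma that only uses the Jacobi identity and the fact that an ideal absorbs brackets, exactly as in the displayed computation. Applying it twice (once for $\mf{r}$, once for $\mf{r}^{\psi}$) and noting that a sum of two perfect ideals is perfect gives $\mf{n} = \mf{n}'$.

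The main obstacle I anticipate is not any single calculation but making sure the reduction step is airtight: namely that $\chi(\mf{h})/\mf{n}$ is genuinely isomorphic to $\chi(\mf{g})$, with $\mf{n}$ precisely the ideal generated by the two isomorphic copies of $\mf{r}$. This is essentially the statement that $\chi$ is a right-exact functor in the appropriate sense — if $\mf{g} = \mf{h}/\mf{r}$ then $\chi(\mf{g}) = \chi(\mf{h})/\langle\langle \mf{r}, \mf{r}^{\psi}\rangle\rangle$ — which follows by comparing presentations (the relators $[x,x^{\psi}]=0$ of $\chi(\mf{h})$ map onto those of $\chi(\mf{g})$, and the copies of $\mf{r}$ account for exactly the extra relations), but deserves to be spelled out. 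Once that is in place, the perfectness argument above is routine, and the proof concludes by invoking Lemma \ref{fpFP2} in the reverse direction.
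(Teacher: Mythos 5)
Your proposal is correct and follows essentially the same route as the paper: reduce via Lemma \ref{fpFP2} to $\mf{g}\simeq\mf{h}/\mf{r}$ with $\mf{h}$ finitely presented and $\mf{r}$ perfect, use Corollary \ref{chifp} to get $\chi(\mf{h})$ finitely presented, observe $\chi(\mf{g})\simeq\chi(\mf{h})/\mf{n}$ with $\mf{n}=\langle\langle \mf{r},\mf{r}^{\psi}\rangle\rangle$, and show $\mf{n}$ is perfect. The paper states the perfectness of $\mf{n}$ in one line (since $\mf{r}\cup\mf{r}^{\psi}\subseteq[\mf{n},\mf{n}]$ and $[\mf{n},\mf{n}]$ is an ideal, minimality of $\mf{n}$ gives $\mf{n}=\mf{n}'$), which is exactly the content of your Jacobi-identity computation spelled out.
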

\begin{proof}
By Lemma \ref{fpFP2}, there is a finitely presented Lie algebra $\mf{h}$ and an ideal $\mf{r} \subseteq \mf{h}$ such that 
$\mf{g} \simeq \mf{h}/\mf{r}$ and $\mf{r} = [\mf{r},\mf{r}]$. Now, $\chi(\mf{h})$ is finitely presented by
Corollary \ref{chifp}, and clearly $\chi(\mf{g}) \simeq \chi(\mf{h})/I$, where $I$ is the ideal generated by $x$ and $x^{\psi}$, for all $x \in \mf{r}$. This ideal
is perfect, since $\mf{r} = [\mf{r},\mf{r}] \subseteq [I,I]$, and the same holds for $\mf{r}^{\psi}$. Thus $\chi(\mf{g})$ is of type $FP_2$ by Lemma \ref{fpFP2}. 
\end{proof}

Again, the converse is true by the Lemma \ref{retractsFPm}.

\subsection{\texorpdfstring{$\chi(\cdot)$}{chi()} does not preserve \texorpdfstring{$FP_3$}{FP3}}
Fix $\mf{f}$ a non-abelian free Lie algebra of finite rank. Recall that subalgebras of a free Lie algebra are again free (see \cite{Shirshov}).

\begin{lemma}  \label{H2Sinfdim}
Let $S = \langle \{ (x,-x) \hbox{ } | \hbox{ } x \in \mf{f}\}\rangle \subseteq \mf{f} \oplus \mf{f}$.
Then $H_2(S;K)$ is infinite dimensional.
\end{lemma}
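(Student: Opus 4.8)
The plan is to exploit a short exact sequence of \emph{free} Lie algebras sitting inside $S$, use the resulting (degenerate) Lyndon--Hochschild--Serre spectral sequence, and then reduce to a homology computation over $\mf{f}$ itself which is detected by a graded dimension count.

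First I would observe that $\mf{f}'\oplus 0$ is an ideal of $S$. Indeed, as in the proof of Lemma~\ref{homolfg}, $[(x,-x),(y,-y)]=([x,y],[x,y])$ together with $([x,y],-[x,y])\in S$ give $([x,y],0)\in S$, so $\mf{f}'\oplus 0\subseteq S$; and $[(u,v),(a,0)]=([u,a],0)$ shows $\mf{f}'\oplus 0$ is an ideal. Every generator $(x,-x)$ of $S$ satisfies ``first coordinate $\equiv -$(second coordinate) mod $\mf{f}'$'', a condition preserved under brackets, so the projection onto the second coordinate has kernel exactly $\mf{f}'\oplus 0$, and it is surjective because $S$ is a subdirect sum. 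Hence there is a short exact sequence $\mf{f}'\rightarrowtail S\twoheadrightarrow \mf{f}$ in which the ideal $\mf{f}'\oplus 0\cong\mf{f}'$ is free, being a subalgebra of the free Lie algebra $\mf{f}$ (see \cite{Shirshov}).

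Next I would run the spectral sequence $E^2_{p,q}=H_p(\mf{f};H_q(\mf{f}';K))\Rightarrow H_{p+q}(S;K)$. Since $\mf{f}$ and $\mf{f}'$ are free, $H_p(\mf{f};-)=0$ for $p\geq 2$ and $H_q(\mf{f}';K)=0$ for $q\geq 2$, so $E^2$ is concentrated in the square $p,q\in\{0,1\}$ and all higher differentials vanish. Reading off total degree $2$ and using $E^2_{2,0}=H_2(\mf{f};K)=0$ and $E^2_{0,2}=H_0(\mf{f};H_2(\mf{f}';K))=0$, we get $H_2(S;K)\cong E^2_{1,1}=H_1(\mf{f};H_1(\mf{f}';K))$. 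Here $H_1(\mf{f}';K)=\mf{f}'/\mf{f}''$, and the $\mathcal{U}(\mf{f})$-action induced on it by $S$ is, computing via the lift $x\mapsto(-x,x)$, the adjoint-induced action twisted by the automorphism $x\mapsto -x$ of $\mf{f}$; since an automorphism of $\mathcal{U}(\mf{f})$ does not affect dimensions of homology, it is enough to show that $H_1(\mf{f};\mf{f}'/\mf{f}'')$, with the standard action, is infinite dimensional.

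Finally I would do this by a graded Euler-characteristic argument. Write $V=\mf{f}/\mf{f}'$, $m=\dim V=\operatorname{rank}\mf{f}\geq 2$, and $M=\mf{f}'/\mf{f}''$. From the standard length-one free resolution $0\to\mathcal{U}(\mf{f})\otimes_K V\to\mathcal{U}(\mf{f})\to K\to 0$ one obtains, for any module $N$, an exact sequence $0\to H_1(\mf{f};N)\to N\otimes_K V\to N\to H_0(\mf{f};N)\to 0$; with $N=M$ this gives $H_0(\mf{f};M)\cong\mf{f}'/[\mf{f},\mf{f}']=\gamma_2(\mf{f})/\gamma_3(\mf{f})\cong\Lambda^2 V$, finite dimensional. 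All terms are graded with finite-dimensional homogeneous components and the maps are homogeneous once $V$ is placed in degree $1$, so for large $n$ one has $\dim H_1(\mf{f};M)_n=m\dim M_{n-1}-\dim M_n$; thus the generating function of $\dim H_1(\mf{f};M)_n$ equals $(mt-1)H_M(t)+\binom{m}{2}t^2$. Restricting the same resolution to the $\mathcal{U}(\mf{f}')$-free algebra $\mathcal{U}(\mf{f})$ yields $0\to M\to\mathcal{U}(V)\otimes_K V\to\mathcal{U}(V)\to K\to 0$, whence $H_M(t)=1+\dfrac{mt-1}{(1-t)^m}$, so $(mt-1)H_M(t)=(mt-1)+\dfrac{(mt-1)^2}{(1-t)^m}$. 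Since $(1-t)\nmid(mt-1)^2$ (because $m\neq 1$), this rational function has a pole of order $m$ at $t=1$ with positive leading partial-fraction coefficient $(m-1)^2$, so its coefficients tend to $+\infty$; hence $H_1(\mf{f};M)$, and therefore $H_2(S;K)$, is infinite dimensional.

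I expect the delicate points to be pinning down the $\mathcal{U}(\mf{f})$-module structure on $H_1(\mf{f}';K)$ (and checking that the sign twist is harmless) and keeping the gradings consistent so that the Euler-characteristic computation is legitimate; the degeneration of the spectral sequence and the remaining homological algebra are routine.
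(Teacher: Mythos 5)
Your proof is correct, and it follows the paper's opening moves exactly: the same short exact sequence $\mf{f}'\oplus 0\rightarrowtail S\twoheadrightarrow\mf{f}$ obtained by projecting onto a coordinate, and the same degenerate LHS spectral sequence reducing everything to the term $E^2_{1,1}=H_1(\mf{f};H_1(\mf{f}';K))$. (You go slightly further and identify $H_2(S;K)$ with this term exactly, using $H_2(\mf{f};K)=0=H_2(\mf{f}';K)$; the paper only needs that $E^2_{1,1}$ is a subquotient of $H_2(S;K)$.) Where you genuinely diverge is in proving that $H_1(\mf{f};\mf{f}'/\mf{f}'')$ is infinite dimensional. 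The paper argues by contradiction via Lemma \ref{lemma3.1KM}: any $c\in\mf{f}'\smallsetminus\{0\}$ kills $\mf{f}'/\mf{f}''$, so finite-dimensionality of $H_1(\mf{f};\mf{f}'/\mf{f}'')$ would force $\mf{f}'/\mf{f}''$, and hence $\mf{f}'$, to be finitely generated — impossible for $\mf{f}$ free non-abelian. You instead run an explicit graded Euler-characteristic computation with the Koszul-type resolution $0\to\mathcal{U}(\mf{f})\otimes V\to\mathcal{U}(\mf{f})\to K\to0$ and the Hilbert series of $\mf{f}'/\mf{f}''$. Your route is longer but self-contained (it avoids the black box from \cite{KochMart}) and yields quantitative information: the graded pieces of $H_2(S;K)$ grow like $n^{m-1}$.

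One small repair is needed. The induced action of $x\in\mf{f}$ on $\mf{f}'/\mf{f}''$, computed from the lift $(-x,x)$, is $a\mapsto-[x,a]$, but $x\mapsto-x$ is \emph{not} an automorphism of a non-abelian Lie algebra, so your stated justification for discarding the sign does not stand as written. This is harmless for two independent reasons: the map multiplying the degree-$n$ component by $(-1)^n$ is an isomorphism between the twisted and the standard module; and, more to the point, your Euler-characteristic argument never uses the module structure beyond the graded dimensions of $M$ and the finite-dimensionality of $H_0(\mf{f};M)=\mf{f}'/[\mf{f},\mf{f}']$, both of which are unaffected by the sign. Either remark should replace the appeal to an ``automorphism $x\mapsto-x$''.
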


\begin{proof}
Let $\pi: \mf{f} \oplus \mf{f} \to \mf{f}$ be the projection onto the second coordinate and let $N=ker(\pi) \cap S$. Notice that
$N \subseteq \mf{f} \oplus 0 \simeq \mf{f}$, so it is a free Lie 
algebra. In fact, it is not hard to see that $N$ is the inclusion of $\mf{f}'$ on the first coordinate of $\mf{f} \oplus \mf{f}$, i.e., 
$N = \mf{f}' \oplus 0$ (see the proof of Lemma \ref{homolfg}).
The sequence $N \rightarrowtail S \twoheadrightarrow \mf{f}$ is exact, so there is a spectral sequence
\[ E_{p,q}^2 = H_p(\mf{f}, H_q(N;K)) \Rightarrow H_{p+q}(S;K).\]
The fact that $\mf{f}$ is free implies that $E_{p,q}^2=0$ for all $p \geq 2$, so $E^2 = E^{\infty}$. 

Suppose on the contrary that $H_2(S;K)$ is finite dimensional. Then the subquotient $E_{1,1}^2$ is also finite dimensional, and 
\[E_{1,1}^2 = H_1(\mf{f}; H_1(N;K)) \simeq H_1(\mf{f}; N/N').\]
Now fix any $c \in \mf{f}' \smallsetminus \{0\}$. Notice that $c$ acts trivially on $N/N'$, so $dim_K (c \cdot N/N') = 0$, where $c$ is identified with
its image in $\mathcal{U}(\mf{f})$. It follows by Lemma \ref{lemma3.1KM} that $N/N'$ is itself finite dimensional, which is turn equivalent to $N$ being
finitely generated. This is a contradiction, since $N \simeq \mf{f}'$, and $\mf{f}$ is a non-abelian free Lie algebra.
\end{proof}

\begin{lemma}  \label{H2Lfdim}
If $\chi(\mf{f})$ is of type $FP_3$, then $H_2(L;K)$ is of finite dimension. 
\end{lemma}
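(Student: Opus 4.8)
The plan is to run the Lyndon-Hochschild-Serre spectral sequence on the split extension $L \rightarrowtail \chi(\mf{f}) \twoheadrightarrow \mf{f}$, use the freeness of $\mf{f}$ to make it collapse, and then reduce the statement to Lemma \ref{lemma3.1KM}. Concretely, I would first take the spectral sequence with trivial coefficients, $E^2_{p,q} = H_p(\mf{f}; H_q(L;K)) \Rightarrow H_{p+q}(\chi(\mf{f}); K)$. Since $\mf{f}$ is free, $H_p(\mf{f}; -) = 0$ for all $p \geq 2$, so the $E^2$-page is concentrated in the two columns $p=0$ and $p=1$; as $d^r$ has bidegree $(-r,r-1)$ with $r \geq 2$, every higher differential moves the $p$-coordinate outside $\{0,1\}$ and hence vanishes, so $E^2 = E^\infty$. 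Reading off the induced filtration on $H_3$, the term $H_1(\mf{f}; H_2(L;K)) = E^2_{1,2} = E^\infty_{1,2}$ is a quotient of $H_3(\chi(\mf{f}); K)$. If $\chi(\mf{f})$ is of type $FP_3$, then $H_3(\chi(\mf{f}); K)$ is finite dimensional, hence so is $H_1(\mf{f}; H_2(L;K))$.

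The second step is to produce a nonzero element of $\mc{U}(\mf{f})$ that annihilates $H_2(L;K)$. I claim that $\mf{f}' = [\mf{f},\mf{f}]$, viewed as a subalgebra of $\mf{f} = \chi(\mf{f})/L$, acts trivially on the $\mc{U}(\mf{f})$-module $H_2(L;K)$. Indeed, since $L$ acts trivially on its own homology, the action of a class $\overline{[a,b]}$ (with $a,b \in \mf{f} \subseteq \chi(\mf{f})$) can be computed using any lift of it to $\chi(\mf{f})$; because $\alpha(b^{\psi}) = b$, the element $[a,b^{\psi}]$ is such a lift. But $[a,b^{\psi}] \in D$, and $[D,L] = 0$, so this element acts as $0$ on $L$, and therefore induces the zero map on $H_*(L;K)$. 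Hence $\mf{f}'$ annihilates $H_2(L;K)$. Choosing $x_1, x_2 \in \mf{f}$ with $[x_1,x_2] \neq 0$ (possible as $\mf{f}$ is non-abelian) and setting $c := x_1 x_2 - x_2 x_1 \in \mc{U}(\mf{f}) \smallsetminus \{0\}$, we get $c \cdot H_2(L;K) = 0$.

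Finally I would apply Lemma \ref{lemma3.1KM} to the free Lie algebra $\mf{f}$ and the module $A = H_2(L;K)$: both $H_1(\mf{f}; A)$ and $cA = 0$ are finite dimensional, so $A = H_2(L;K)$ is finite dimensional, which is the claim. The one non-routine point is the second step — the observation that $\mf{f}'$ acts trivially on $H_2(L;K)$, which is exactly what supplies the nonzero $c$ needed to invoke Lemma \ref{lemma3.1KM}; once this is in hand, the collapse of the spectral sequence and the final appeal to the lemma are standard.
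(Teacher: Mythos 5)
Your proof is correct and takes essentially the same route as the paper's: the same collapse of the Lyndon--Hochschild--Serre spectral sequence for $L \rightarrowtail \chi(\mf{f}) \twoheadrightarrow \mf{f}$ using freeness of $\mf{f}$, the same observation that $[x,y]$ lifts to $[x,y^{\psi}] \in D$ and hence acts trivially on $L$ (and thus on $H_2(L;K)$) because $[D,L]=0$, and the same final appeal to Lemma \ref{lemma3.1KM}.
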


\begin{proof}
 The short exact sequence $L \rightarrowtail \chi(\mf{f}) \twoheadrightarrow \mf{f}$ gives rise to a spectral sequence:
 \[ E_{p,q}^2 = H_p(\mf{f}; H_q(L;K)) \Rightarrow H_{p+q}(\chi(\mf{f});K).\]
 Notice that, as in the proof of Lemma \ref{H2Sinfdim}, the fact that $\mf{f}$ is free implies that  
 $E^2 = E^{\infty}$. Now $H_3(\chi(\mf{f});K)$ is finite dimensional, thus $E_{p,q}^2 = E_{p,q}^{\infty}$ is finite dimensional as well
 whenever $p+q=3$. In particular,  $E_{1,2}^2 = H_1(\mf{f}; H_2(L;K))$ is finite dimensional. 
 
 Now for any $x,y \in \mf{f}$ the element $[x,y]$ acts trivially on $L$, since it is the image of the element $[x,y^{\psi}] \in D \subseteq \chi(\mf{f})$,
 and $[D,L]=0$. It follows that $[x,y]$ acts trivially on $H_2(L;K)$ as well. 
 Finally since $\mf{f}$ is non-abelian, $[x,y]$ can be taken to be non trivial,
 so Lemma \ref{lemma3.1KM} applies again: $H_2(L;K)$ is finite dimensional.
\end{proof}

\begin{prop}
$\chi(\mf{f})$ is not of type $FP_3$.
\end{prop}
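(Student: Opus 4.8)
The plan is to derive the statement from Lemma~\ref{H2Lfdim}: if I can show that $H_2(L;K)$ is infinite-dimensional, then $\chi(\mf{f})$ cannot be of type $FP_3$. To get at $H_2(L;K)$ I would use the central extension $W \rightarrowtail L \twoheadrightarrow \rho(L)$, together with the identification $\rho(L)\simeq S$ and the fact (Lemma~\ref{H2Sinfdim}) that $H_2(S;K)$ is infinite-dimensional. Since $W$ is central in $L$, the associated five-term exact sequence is
\[ H_2(L;K) \to H_2(S;K) \to W \to L/L' \to S/S' \to 0, \]
and $L/L'$ is finite-dimensional by Proposition~\ref{Labfindim}. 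Reading this off, $H_2(S;K)/\mathrm{im}(H_2(L;K)\to H_2(S;K))$ embeds into $W$ with finite-dimensional cokernel, so at least one of $H_2(L;K)$ and $W$ must be infinite-dimensional.

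The hard part is that this does not by itself finish the argument: one does not know in general whether $W=W(\mf{f})$ is finite-dimensional, so a priori the infinite-dimensional behaviour could be concentrated entirely in $W$. I would resolve this by first handling the rank-$2$ case, where $W$ collapses. Indeed, for $\mf{f}$ free of rank $2$ one has $R(\mf{f})=0$ (the ideal $R$ vanishes for $2$-generated Lie algebras) and $H_2(\mf{f};K)=0$ (free Lie algebras have homological dimension one), hence $W(\mf{f})=R(\mf{f})=0$; equivalently $\rho$ restricts to an isomorphism $L \to \rho(L)\simeq S$. Then $H_2(L;K)\simeq H_2(S;K)$ is infinite-dimensional, and Lemma~\ref{H2Lfdim} gives that $\chi(\mf{f})$ is not of type $FP_3$ when $\mf{f}$ is free of rank~$2$.

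For a free Lie algebra $\mf{f}$ of arbitrary finite rank $m\ge 2$, I would finish by a retraction argument. Let $\mf{f}_0\le\mf{f}$ be the free subalgebra on two of the free generators; sending the remaining generators to $0$ presents $\mf{f}_0$ as a retract of $\mf{f}$. Since $\chi$ is a functor, applying it to the pair of maps $\mf{f}_0\hookrightarrow\mf{f}\twoheadrightarrow\mf{f}_0$ with composite the identity exhibits $\chi(\mf{f}_0)$ as a retract of $\chi(\mf{f})$, that is, $\chi(\mf{f})\simeq\mf{a}\rtimes\chi(\mf{f}_0)$ for a suitable ideal $\mf{a}$. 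If $\chi(\mf{f})$ were of type $FP_3$, then $\chi(\mf{f}_0)$ would be of type $FP_3$ by Lemma~\ref{retractsFPm}, contradicting the rank-$2$ case; hence $\chi(\mf{f})$ is not of type $FP_3$. Everything other than the control of $W$ — the identification $\rho(L)\simeq S$, finiteness of $L/L'$, functoriality of $\chi$, and stability of type $FP_3$ under retracts — is already in hand, so the single genuine obstacle is the otherwise open question of the dimension of $W$, which is sidestepped precisely by passing to rank~$2$.
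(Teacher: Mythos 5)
Your argument is correct, but it takes a genuinely different route from the paper's at the crucial step, namely controlling $W$. The paper works with a fixed $\mf{f}$ of arbitrary finite rank and runs the full Lyndon--Hochschild--Serre spectral sequence of $W \rightarrowtail L \twoheadrightarrow S$: since $S\subseteq\mf{f}\oplus\mf{f}$ kills the columns $p\geq 3$, the term $E^2_{1,1}=H_1(S;W)\simeq S/S'\otimes_K W$ survives to $E^\infty$ and is a subquotient of $H_2(L;K)$, which Lemma~\ref{H2Lfdim} makes finite dimensional under the $FP_3$ hypothesis; this forces $W$ to be finite dimensional, and only then does the five-term sequence contradict Lemma~\ref{H2Sinfdim}. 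You instead sidestep the dimension of $W$ entirely by specializing to rank $2$, where $W=R=0$ (using $R(\mf{g})=0$ for $2$-generated $\mf{g}$ together with $W/R\simeq H_2(\mf{f};K)=0$), so that $L\simeq S$ and Lemmas~\ref{H2Sinfdim} and~\ref{H2Lfdim} collide immediately; you then propagate to higher rank via functoriality of $\chi$ and Lemma~\ref{retractsFPm}. Both arguments are sound. What the paper's version buys is self-containedness within that section and a uniform treatment of all ranks; what yours buys is the elimination of the $E^2_{1,1}$ computation, at the cost of importing Theorem~\ref{T4} and the $R=0$ computation for $2$-generated algebras, which in the paper's layout appear only in later sections (there is no circularity, since neither depends on $FP_3$ considerations, but the dependency would have to be reordered). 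One small point worth making explicit in your write-up is that the retraction $\mf{f}\twoheadrightarrow\mf{f}_0$ induces a split surjection $\chi(\mf{f})\twoheadrightarrow\chi(\mf{f}_0)$ because $\chi$ of a homomorphism is defined on generators and respects the defining relations $[x,x^\psi]=0$; this is exactly the form of retract required by Lemma~\ref{retractsFPm}.
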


\begin{proof}
Suppose on the contrary that $\chi(\mf{f})$ is of type $FP_3$. Consider the spectral sequence associated to $W \rightarrowtail L \twoheadrightarrow S$:
\[ E_{p,q}^2 = H_p(S; H_q(W;K)) \Rightarrow H_{p+q}(L;K).\]
Since $S \subseteq \mf{f} \oplus \mf{f}$, it follows that $E_{p,q}^2=0$ for all $p \geq 3$. 
Consider the term $E_{1,1}^2$. Recall that the bidegree of the differential map of the spectral sequence $\{E^r\}$ is $(-r,r-1)$. Thus the differential maps
that involve $E_{1,1}^2$ are
\[d_{1,1}^2: E_{1,1}^2 \to E_{-1,2}^2\]
and
\[d_{3,0}^2: E_{3,0}^2 \to E_{1,1}^2.\]
Note that $E_{-1,2}^2 = 0 = E_{3,0}^2$, so $E_{1,1}^2 = E_{1,1}^3$. The fact that $E^3$ is non trivial only on the columns 
$p=0,1,2$, together with the knowledge of the bidegree implies that $d^r$ is trivial for $r \geq 3$. Thus $E^3 = E^{\infty}$. It follows that
$E_{1,1}^2 = E_{1,1}^{\infty}$ is a subquotient of $H_2(L;K)$, which is finite dimensional
by Lemma \ref{H2Lfdim}.

On the other hand
\[E_{1,1}^2 = H_1(S; H_1(W;K)) = H_1(S;W).\]
Notice that $S$ acts trivially on $W$, so $H_1(S;W) \simeq S/S' \otimes_K W$. Thus $W$ must be finite dimensional, since $S/S'$ is not trivial ($S$ projects
onto $\mf{f}/\mf{f}'$).

Finally, the 5-term exact sequence associated to $W \rightarrowtail L \twoheadrightarrow S$ can be written as:
\[H_2(L;K) \to H_2(S;K) \to W \to H_1(L;K) \to H_1(S;K) \to 0.\]
But $H_2(L;K)$ and $W$ are both finite dimensional, so $H_2(S;K)$ is finite dimensional as well. This contradicts Lemma \ref{H2Sinfdim}.
\end{proof}
\end{section}

\begin{section}{Stem extensions and the Schur multiplier}
Given any Lie algebra $\mf{g}$, denote 
\[R = R(\mf{g}) := [\mf{g}, L, \mf{g}^{\psi}] \subseteq \chi(\mf{g}).\]
This is the subalgebra of $\chi(\mf{g})$ generated by the triple brackets $[x,[\ell,y^{\psi}]]$, for all $x,y \in \mf{g}$ and $\ell \in L=L(\mf{g})$. It follows from 
the facts that $L$ is an ideal and $[L,D]=0$ that $R$ is actually an ideal of $\chi(\mf{g}$). Notice also that $R \subseteq W=W(\mf{g})$.

Recall that $D$ is the ideal of $\chi(\mf{g})$ generated by the elements $[y,z^{\psi}]$. In general it is not generated by these elements as a Lie subalgebra,
but it will be modulo $R$. Indeed, notice that for $x,y,z \in \mf{g}$ we have
\[  [x,[y-y^{\psi},z^{\psi}]] = [x,[y,z^{\psi}]] - [x,[y^{\psi},z^{\psi}]] = [x,[y,z^{\psi}]] - [x,[y,z]^{\psi}]\]
Since $[x,[y-y^{\psi},z^{\psi}]] \in R$, it follows that $[x,[y,z^{\psi}]]$ is congruent to $[x,[y,z]^{\psi}]$. The same holds for 
$[x^{\psi},[y,z^{\psi}]]$. Thus $D/R$ is actually generated as an algebra by the image of the brackets $[x,y^{\psi}]$, for $x,y \in \mf{g}$.

Now we consider the quotient $W/R$. Since $W \subseteq D$, it follows by the comments above that the elements of $W/R$ are of the form:
\begin{equation}  \label{WR1}
  w + R = \sum_{\alpha} \lambda_{\alpha} [ [x_{\alpha,1},y_{\alpha,1}^{\psi}], \dots, [x_{\alpha,n_{\alpha}},y_{\alpha,n_{\alpha}}^{\psi}]]+R,
\end{equation}
with $\lambda_{\alpha} \in K$ and $x_{\alpha,j}, y_{\alpha,j} \in \mf{g}$.
Also, as $W \subseteq L$, it must be true as well that
\begin{equation}  \label{WR2}
\sum_{\alpha} \lambda_{\alpha} [ [x_{\alpha,1},y_{\alpha,1}], \ldots, [x_{\alpha,n_{\alpha}},y_{\alpha,n_{\alpha}}]]=0,
\end{equation}
and this describes completely the elements of $W/R$.

\subsection{\texorpdfstring{$W/R$}{W/R} is a quotient of \texorpdfstring{$H_2(\mf{g};K)$}{H2(g;K)}}
Following Ellis \cite{Ellis}, we consider the non-abelian exterior product $\mf{g} \wedge \mf{g}$. It is defined as the Lie algebra generated by the symbols
$x \wedge y$, with $x,y \in \mf{g}$, subject to the following defining relations:
\begin{enumerate}
 \item $(x_1 + x_2) \wedge y = x_1 \wedge y + x_2 \wedge y$;
 \item $x \wedge (y_1 + y_2) = x \wedge y_1 + x \wedge y_2$;
 \item $\lambda (x \wedge y) = (\lambda x) \wedge y = x \wedge (\lambda y)$;
 \item $x \wedge x = 0$;
 \item $[x_1,x_2] \wedge y = [x_1,y] \wedge x_2 + x_1 \wedge [x_2,y]$;
 \item $x \wedge [y_1,y_2] = [x,y_1] \wedge y_2 + y_1 \wedge [x,y_2]$;
 \item $[x_1 \wedge y_1, x_2 \wedge y_2] = [x_1, y_1] \wedge [x_2,y_2]$;
\end{enumerate}
for all $x,x_1,x_2,y,y_1,y_2 \in \mf{g}$ and $\lambda \in K$. 
  
Let $\phi: \mf{g} \wedge \mf{g} \to \mf{g}$ be the Lie algebra homomorphism defined by $\phi(x \wedge y) = [x,y]$. The main result in \cite{Ellis} is that
$ker(\phi)$ is isomorphic to the Schur multiplier $H_2(\mathfrak{g}; K)$.

Notice that an element in $ker(\phi)$ is written
as 
\begin{equation} \label{WR3}
 \sum_{\alpha} \lambda_{\alpha} [ x_{\alpha,1} \wedge y_{\alpha,1}, \ldots, x_{\alpha,n_{\alpha}} \wedge y_{\alpha,n_{\alpha}}],
\end{equation}
with $\lambda_{\alpha} \in K$ and $x_{\alpha,j}, y_{\alpha,j} \in \mf{g}$ such that 
\begin{equation} \label{WR4}
\sum_{\alpha} \lambda_{\alpha} [ [x_{\alpha,1},y_{\alpha,1}], \ldots, [x_{\alpha,n_{\alpha}},y_{\alpha,n_{\alpha}}]]=0
\end{equation}
in $\mf{g}$. Consider the homomorphism $\theta: \mf{g} \wedge \mf{g} \to \chi(\mf{g})/R$ defined by $\theta(x \wedge y) = [x,y^{\psi}]$. It is 
not hard to see that $\theta$ is well defined. Moreover, it follows by
\eqref{WR1}, \eqref{WR2}, \eqref{WR3} and \eqref{WR4} that $\theta$ induces a surjective homomorphism 
\[ \theta_1: ker(\phi) \to W/R,\]
thus $W/R$ is a quotient of $H_2(\mf{g};K)$.

\subsection{\texorpdfstring{$H_2(\mf{g};K)$}{H2(g;K)} is a quotient of \texorpdfstring{$W/R$}{W/R}}
Now we adapt the arguments in \cite{Sidki}, Section 4.1. Suppose that
\begin{equation} \label{stem}
 0 \to Z \to \mf{h} \to \mf{g} \to 0 
\end{equation}
is a stem extension of Lie algebras, that is, \eqref{stem} is an exact sequence, $Z$ is a central ideal of $\mf{h}$ and $Z \subseteq \mf{h}'$. 
Consider
\[ P = \langle \{(x,x,0), (0,x,x) \hbox{ } | \hbox{ } x \in \mf{h}\} \rangle \subseteq \mf{h} \oplus \mf{h} \oplus \mf{h}.\]
In other words, $P$ is the image of $\rho_{\mf{h}}: \chi(\mf{h}) \to \mf{h} \oplus \mf{h} \oplus \mf{h}$. It is not hard to check that $P$ can be described also as
\[ P = \{ (x,y,z) \in \mf{h} \oplus \mf{h} \oplus \mf{h} \hbox{ } | \hbox{ } x-y+z \in \mf{h}' \}.\]
Define
\begin{equation} \label{descrN}
 B = \{(z,z+z',z') \hbox{ } | \hbox{ } z,z' \in Z\} \subseteq P.
\end{equation}
Notice that $B$ is a central subalgebra of $P$, since $Z$ is central in $\mf{h}$. It follows that $P/B$ is a quotient of $\chi(\mf{h})$, by means of the homomorphism
$\theta: \chi(\mf{h}) \to P/B$ such that $\theta(x) = (x,x,0) + B$ and $\theta(x^{\psi}) = (0,x,x) + B$. Since $\theta(z) = \theta(z^{\psi})= 0$ for all 
$z \in Z$, it follows that $\theta$ factors through a homomorphism $\lambda: \chi(\mf{g}) \to P/B$, thus making the following diagram commutative:
\[ \xymatrix{ \chi(\mf{h}) \ar@{->>}[d] \ar@{->>}[r]^{\theta}  & P/B \\
\chi(\mf{g}) \ar@{->>}[ur]_{\lambda} & }
\]

\begin{lemma}  \label{lemma6.1}
We have:
\begin{enumerate}
 \item $R(\mf{g}) \subseteq ker(\lambda) \subseteq W(\mf{g})$,
 \item $\lambda(W(\mf{g})) \simeq Z$.
\end{enumerate}
\end{lemma}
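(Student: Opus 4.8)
The plan is to bring in two maps alongside $\lambda$ and then run a couple of short diagram chases. First, there is the functorial surjection $q\colon\chi(\mf{h})\twoheadrightarrow\chi(\mf{g})$ induced by $\mf{h}\twoheadrightarrow\mf{g}$; by the construction of $\lambda$ it fits into $\lambda\circ q=\theta$, and since $q$ carries the generating set $\{h-h^{\psi}\}$ of $L(\mf{h})$ onto that of $L(\mf{g})$, it carries $L(\mf{h})$ onto $L(\mf{g})$ and hence $R(\mf{h})=[\mf{h},L(\mf{h}),\mf{h}^{\psi}]$ onto $R(\mf{g})$. Second, there is the surjection $\pi\colon P/B\to\mf{g}\oplus\mf{g}\oplus\mf{g}$ obtained by applying $\mf{h}\twoheadrightarrow\mf{g}$ in each coordinate of an element of $P$; it is well defined because every coordinate of an element of $B$ lies in $Z=\ker(\mf{h}\to\mf{g})$, and evaluating on the generators $x$, $x^{\psi}$ shows that $\pi\circ\lambda=\rho$, where $\rho\colon\chi(\mf{g})\to\mf{g}\oplus\mf{g}\oplus\mf{g}$ is the homomorphism whose kernel is $W(\mf{g})$. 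I will also use that $\theta$ factors as $\chi(\mf{h})\xrightarrow{\rho_{\mf{h}}}P\to P/B$, so that $\ker\theta\supseteq\ker\rho_{\mf{h}}=W(\mf{h})\supseteq R(\mf{h})$.

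Part (1) is then immediate. From $\pi\circ\lambda=\rho$ we get $\ker\lambda\subseteq\ker\rho=W(\mf{g})$. For the other inclusion, $R(\mf{g})=q(R(\mf{h}))$ gives $\lambda(R(\mf{g}))=\lambda(q(R(\mf{h})))=\theta(R(\mf{h}))=0$ by the factorization of $\theta$ above.

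For part (2), since $R(\mf{g})\subseteq\ker\lambda\subseteq W(\mf{g})$ the map $\lambda$ restricts to $W(\mf{g})$, and $\pi\circ\lambda=\rho$ forces $\lambda(W(\mf{g}))\subseteq\ker\pi$. Next I identify $\ker\pi$: it equals $(P\cap(Z\oplus Z\oplus Z))/B$, and here the stem hypothesis $Z\subseteq\mf{h}'$ enters, since it makes the defining condition $x-y+z\in\mf{h}'$ of $P$ automatic for $x,y,z\in Z$, so $P\cap(Z\oplus Z\oplus Z)=Z\oplus Z\oplus Z$. The linear map $(a,b,c)\mapsto a-b+c$ has kernel exactly $B$ by the description \eqref{descrN}, so it induces an isomorphism $(Z\oplus Z\oplus Z)/B\simeq Z$, giving $\ker\pi\simeq Z$. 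It remains to upgrade $\lambda(W(\mf{g}))\subseteq\ker\pi$ to an equality: $\theta$ is surjective and $\theta=\lambda\circ q$, so $\lambda$ is surjective; hence any $v\in\ker\pi$ is $\lambda(u)$ for some $u\in\chi(\mf{g})$, and then $\rho(u)=\pi(\lambda(u))=\pi(v)=0$ puts $u$ in $W(\mf{g})$, so $v\in\lambda(W(\mf{g}))$. Therefore $\lambda(W(\mf{g}))=\ker\pi\simeq Z$.

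The checks that $\pi$ is well defined, that $\pi\circ\lambda=\rho$ on generators, and that $(a,b,c)\mapsto a-b+c$ has kernel exactly $B$ are mechanical. The one step deserving attention is the last one in part (2): one must not stop at the containment $\lambda(W(\mf{g}))\subseteq\ker\pi$ but use surjectivity of $\lambda$ together with $\pi\circ\lambda=\rho$ to obtain equality — and it is precisely the stem condition $Z\subseteq\mf{h}'$ that makes $P\cap(Z\oplus Z\oplus Z)$ large enough for $\ker\pi$ to have the right size.
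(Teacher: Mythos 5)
Your proof is correct and follows essentially the same route as the paper: both arguments factor $\theta$ through $\rho_{\mf{h}}$ to get $R(\mf{g})=q(R(\mf{h}))\subseteq\ker\lambda$, use the projection $\pi$ with $\pi\circ\lambda=\rho$ to get $\ker\lambda\subseteq W(\mf{g})$ and $\lambda(W(\mf{g}))=\ker\pi$, and then identify $\ker\pi=(Z\oplus Z\oplus Z)/B$ with $Z$. The only cosmetic difference is that you realize this last isomorphism via the functional $(a,b,c)\mapsto a-b+c$, where the paper normalizes coset representatives to the form $(0,z,0)+B$.
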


\begin{proof}
 The first inclusion in (1) is clear, since $R(\mf{g})$ is the image of $R(\mf{h})$, and $R(\mf{h}) \subseteq ker(\theta)$. The second inclusion is also 
 clear, since
 $\rho (\chi(\mf{g})) \simeq P / (Z \oplus Z \oplus Z)$, which clearly is a quotient of $P/B$, and $\rho$ can be written as the composite:
 \[\xymatrixcolsep{1.5pc} \xymatrix{ \chi(\mf{g}) \ar@{->>}[r]^{\lambda} \ar@/_1pc/[rr]_{\rho} & P/B \ar@{->>}[r]^{\pi} & \rho( \chi(\mf{g})),} \]
 where $\pi$ is the canonical projection.
 
 As to item (2), notice that since $\lambda(W(\mf{g})) \subseteq ker(\pi)$, every element of $\lambda(W(\mf{g}))$ is 
 of the form $(z_1,z_2,z_3) + B$, with $z_i \in Z$. Such elements are clearly equivalent to elements of the form $(0,z,0) + B$ in $P/B$ for some $z \in Z$.
 Conversely, any such element must be in the image of $\lambda$, and actually it must be the image of some element of $W(\mf{g})$, since it projects to $0$
 in $\rho(\chi(\mf{g}))$. 
 
 Thus $\lambda(W(\mf{g})) = \{(0,z,0) + B \hbox{ } | \hbox{ } z \in Z \} \subseteq P/B$. The homomorphism $Z \to \lambda(W(\mf{g}))$ that takes $z$ to 
 $(0,z,0)+B$ is clearly well defined and surjective, and it also injective, by the description of $B$ in \eqref{descrN}. Thus $\lambda(W(\mf{g})) \simeq Z$.
  \end{proof}

By the lemma above we see that $W/R = W(\mf{g})/R(\mf{g})$ has $Z$ as quotient for every $Z$ that occurs as the kernel of some stem extension of $\mf{g}$. Now we 
show that $H_2(\mf{g};K)$ is one of such kernels, from what follows that $H_2(\mf{g}; K)$ is a quotient of $W/R$.

\begin{lemma}  \label{lemma6.2}
 Any Lie algebra $\mf{g}$ fits into a stem extension written as
 \[ 0 \to H_2(\mf{g}; K) \to \mf{h} \to \mf{g} \to 0.\]
\end{lemma}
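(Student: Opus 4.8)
The plan is to exhibit a concrete stem extension realizing $H_2(\mf{g};K)$ as the central kernel, using the Hopf formula together with the exterior-square description of the Schur multiplier. I would take a free presentation $\mf{g} \simeq \mf{f}/\mf{r}$ with $\mf{f}$ free and $\mf{r} \subseteq \mf{f}$ an ideal, and set $\mf{h} := \mf{f}/[\mf{f},\mf{r}]$. There is an obvious surjection $\mf{h} \twoheadrightarrow \mf{g}$ with kernel $\mf{r}/[\mf{f},\mf{r}]$, and this kernel is central in $\mf{h}$ since $[\mf{f},\mf{r}] \subseteq [\mf{f},\mf{r}]$ by construction. So the only two things to check are that the kernel lies inside $\mf{h}'$ and that it is isomorphic (as a vector space, which is all that is claimed) to $H_2(\mf{g};K)$.

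For the identification of the kernel, I would invoke the Hopf formula (the Lemma stated in Section 2): $H_2(\mf{g};K) \simeq ([\mf{f},\mf{f}] \cap \mf{r})/[\mf{f},\mf{r}]$. The kernel $\mf{r}/[\mf{f},\mf{r}]$ is generally larger than this, so the naive choice $\mf{h} = \mf{f}/[\mf{f},\mf{r}]$ does not immediately work; one must cut down. The standard fix is to choose a vector-space complement: write $\mf{r}/[\mf{f},\mf{r}] = (([\mf{f},\mf{f}]\cap\mf{r})/[\mf{f},\mf{r}]) \oplus V$ for some subspace $V$, lift $V$ to a subspace $\widetilde V \subseteq \mf{r}$, and note that $\widetilde V + [\mf{f},\mf{r}]$ is an ideal of $\mf{f}$ (it is $\mf{f}$-invariant because $[\mf{f},\widetilde V] \subseteq [\mf{f},\mf{r}]$). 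Then put $\mf{h} := \mf{f}/(\widetilde V + [\mf{f},\mf{r}])$. The kernel of $\mf{h}\twoheadrightarrow\mf{g}$ is now $\mf{r}/(\widetilde V + [\mf{f},\mf{r}]) \simeq ([\mf{f},\mf{f}]\cap\mf{r})/[\mf{f},\mf{r}] \simeq H_2(\mf{g};K)$, it is central since $[\mf{f},\mf{r}]$ is killed, and it is contained in $\mf{h}' = ([\mf{f},\mf{f}] + \widetilde V + [\mf{f},\mf{r}])/(\widetilde V + [\mf{f},\mf{r}])$ precisely because the kernel is represented by elements of $[\mf{f},\mf{f}]\cap\mf{r}$. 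Hence this is a stem extension with kernel $H_2(\mf{g};K)$.

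An alternative, more self-contained route — and the one that meshes best with the preceding subsection — is to use the map $\phi\colon \mf{g}\wedge\mf{g}\to\mf{g}$, $x\wedge y\mapsto[x,y]$, from Ellis: its kernel is $H_2(\mf{g};K)$ and lies in $(\mf{g}\wedge\mf{g})'$ because $\mf{g}\wedge\mf{g}$ is generated by the $x\wedge y$ and relation (7) shows $[x_1\wedge y_1, x_2\wedge y_2] = [x_1,y_1]\wedge[x_2,y_2]$, so every generator of the form $u\wedge v$ with $v$ a bracket already lies in the derived subalgebra; more precisely one checks $\ker\phi \subseteq (\mf{g}\wedge\mf{g})'$ directly from the defining relations and the fact that $\ker\phi$ consists of sums of brackets of the shape in \eqref{WR3}. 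Then $0 \to \ker\phi \to (\mf{g}\wedge\mf{g})/\mathfrak{n} \to \mf{g} \to 0$ for a suitable ideal $\mathfrak{n}$ complementing $\ker\phi$ inside $\ker(\mf{g}\wedge\mf{g}\to\mf{g}/\mf{g}')$ gives the desired stem extension.

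The main obstacle is the bookkeeping around passing from $\mf{r}/[\mf{f},\mf{r}]$ (which is the kernel of the "universal" central extension $\mf{f}/[\mf{f},\mf{r}]\to\mf{g}$) down to exactly $([\mf{f},\mf{f}]\cap\mf{r})/[\mf{f},\mf{r}]$ while keeping everything an \emph{ideal} of the quotient and while ensuring the kernel lands in the derived subalgebra; the complement $\widetilde V$ must be chosen so that $\widetilde V + [\mf{f},\mf{r}]$ is $\mf{f}$-invariant, which works because $[\mf{f},\mf{r}]$ already absorbs all brackets $[\mf{f},\widetilde V]$. Everything else — centrality, exactness, the Hopf-formula identification — is routine. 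I would present the first route (explicit $\mf{f}$-presentation plus Hopf formula) as it is the shortest and uses only results already available in the excerpt.
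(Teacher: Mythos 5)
Your first route is precisely the paper's proof: the paper writes $\mf{g}=\mf{f}/\mf{n}$, uses the Hopf formula to view $H_2(\mf{g};K)$ inside the abelian Lie algebra $\mf{n}/[\mf{f},\mf{n}]$, chooses a complement $\mf{a}/[\mf{f},\mf{n}]$ (your $(\widetilde V+[\mf{f},\mf{r}])/[\mf{f},\mf{r}]$), checks $\mf{a}$ is an ideal since $[\mf{f},\mf{a}]\subseteq[\mf{f},\mf{n}]\subseteq\mf{a}$, and takes $\mf{h}=\mf{f}/\mf{a}$, verifying centrality and that the kernel lies in $\mf{h}'$ exactly as you do. The proposal is correct and essentially identical to the paper's argument.
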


\begin{proof}
 Write $\mf{g} = \mf{f}/\mf{n}$, where $\mf{f}$ is a free Lie algebra. Then by the Hopf formula
 \[ H_2(\mf{g}; K) = \frac{[\mf{f},\mf{f}] \cap \mf{n}}{[\mf{f},\mf{n}]}.\] 
 Thus we can see $H_2(\mf{g}; K)$ as a subalgebra of the abelian Lie algebra $\mf{n}/[\mf{f},\mf{n}]$. It follows that $H_2(\mf{g}; K)$ admits a 
 complement, that is, there is some $\mf{a} \subseteq \mf{n}$, with $[\mf{f},\mf{n}] \subseteq \mf{a}$, such that 
 \begin{equation}  \label{complSchur}
  \frac{\mf{n}}{[\mf{f},\mf{n}]} \simeq H_2(\mf{g}; K) \oplus \frac{\mf{a}}{[\mf{f},\mf{n}]}.
 \end{equation}
 Notice that $[\mf{f},\mf{a}] \subseteq [\mf{f},\mf{n}] \subseteq \mf{a}$, so $\mf{a}$ is an ideal of $\mf{f}$. Consider the exact sequence:
 \begin{equation} \label{stemext}
  0 \to \mf{n}/\mf{a} \to \mf{f}/\mf{a} \to \mf{f}/\mf{n} \to 0.
 \end{equation}
 The choice of $\mf{a}$ implies that $\mf{n}/\mf{a} \simeq H_2(\mf{g};K)$. Since $[\mf{f},\mf{n}] \subseteq \mf{a}$, the extension is central. By the direct sum description 
 \eqref{complSchur}, any element of $\mf{n}$ is equivalent modulo $\mf{a}$ to some $w \in [\mf{f},\mf{f}]$, so $\mf{n}/\mf{a} \subseteq [\mf{f}/\mf{a},\mf{f}/\mf{a}]$.
 Thus \eqref{stemext} is a stem extension.
 \end{proof}

Let $\lambda_2: W(\mf{g})/R(\mf{g}) \to H_2(\mf{g};K)$ be the homomorphism induced by the homomorphism arising in Lemma \ref{lemma6.1} 
when we take $Z$ to be $H_2(\mf{g}; K)$. By thinking of $H_2(\mf{g}; K)$ given by the Hopf formula for a fixed presentation of $\mf{g}$, 
as in Lemma \ref{lemma6.2}, we can write explicit expressions for $\lambda_2$ and for the isomorphism $\alpha: H_2(\mf{g};K) \to ker(\phi)$ of \cite{Ellis}.
It is not hard to see then that the composition
\[ \xymatrix{ H_2(\mf{g};K) \ar[r]^{\alpha} & ker(\phi) \ar[r]^{\theta_1} & W(\mf{g})/R(\mf{g}) \ar[r]^{\lambda_2} & H_2(\mf{g};K)}\]
is the identity. So $W(\mf{g})/R(\mf{g}) \simeq H_2(\mf{g};K)$, as we wanted.
 \end{section}

\begin{section}{Examples}
In this section we consider some examples. We begin by observing that we can describe $Im(\rho)$ as
\begin{equation} \label{Imrho}
Im(\rho) = \{(x,y,z) \in \mf{g} \oplus \mf{g} \oplus \mf{g} \hbox{ } |\hbox{ } x-y+z \in \mf{g}'\}
\end{equation}
for any Lie algebra $\mf{g}$. We want to obtain information about $W(\mf{g})$ and $R(\mf{g})$ in some particular cases.

\subsection{Abelian Lie algebras}

Let $\mf{g}$ be a finite dimensional abelian Lie algebra and let $x_1, \ldots, x_n$ be a basis. Then $\chi(\mf{g})$ is generated by the symbols $x_1, \ldots, x_n$ and
$x_1^{\psi}, \ldots, x_n^{\psi}$ with defining relations given by:
\begin{enumerate}
 \item $[x_i, x_j] = 0$ for all $i>j$;
 \item $[x_i^{\psi}, x_j^{\psi}] = 0$ for all $i>j$;
 \item $[x_i, x_i^{\psi}]=0$ for all $i$;
 \item $[x_i, x_j^{\psi}] = [x_i^{\psi},x_j]$ for all $i>j$.
\end{enumerate}
Notice that the elements $[x_i, x_j^{\psi}]$ are central. Indeed
\[[x_i, x_j^{\psi}]= -\frac{1}{2}[x_i-x_i^{\psi}, x_j- x_j^{\psi}],\]
and by the Jacobi identity, together with the fact that $[D,L]=0$, we get that $[[x_i-x_i^{\psi}, x_j- x_j^{\psi}],x_k]=0$ for all $k$, and similarly for
$x_k^{\psi}$.

\begin{prop}
 If $\mf{g}$ is an abelian Lie algebra of dimension $n$, then $\chi(g)$ is a Lie algebra of dimension $2n+ \binom{n}{2}$. We also have that $W=D$
 is a central ideal of dimension $\binom{n}{2}$, with $\chi(\mf{g})/W \simeq \mf{g} \oplus \mf{g}$. Finally, $R=0$.
\end{prop}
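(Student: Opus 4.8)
The plan is to establish the three assertions from the generators and relations (1)--(4), the identification $\chi(\mf{g})/W\simeq Im(\rho)$ obtained earlier, and one explicit model Lie algebra. First I would record an upper bound for $\dim\chi(\mf{g})$: the algebra is generated by $x_1,\dots,x_n,x_1^{\psi},\dots,x_n^{\psi}$, the only possibly nonzero brackets among these generators are, up to sign, the elements $[x_i,x_j^{\psi}]$ with $i>j$, and these are central by the remark preceding the proposition. Hence every iterated bracket of length at least $3$ in the generators vanishes, so $\chi(\mf{g})$ is spanned by the $2n+\binom{n}{2}$ elements $x_i$, $x_i^{\psi}$ ($1\le i\le n$) and $[x_i,x_j^{\psi}]$ ($i>j$); in particular $\dim\chi(\mf{g})\le 2n+\binom{n}{2}$.

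Next I would identify $W$ with $D$. Since $[x_i,x_j^{\psi}]=-\frac{1}{2}[x_i-x_i^{\psi},x_j-x_j^{\psi}]$ (using $[x,y^{\psi}]=[x^{\psi},y]$ and that $\mf{g}$, $\mf{g}^{\psi}$ are abelian) and the right-hand side lies in $L$ by definition, we get $D\subseteq L$, hence $W=L\cap D=D$, and $D$ is spanned as a vector space by the $\binom{n}{2}$ central elements $[x_i,x_j^{\psi}]$, $i>j$. Combining this with $\chi(\mf{g})/W\simeq Im(\rho)=\{(x,y,z):y=x+z\}\simeq\mf{g}\oplus\mf{g}$ (equation \eqref{Imrho} with $\mf{g}'=0$) yields $\dim\chi(\mf{g})=\dim W+2n\le 2n+\binom{n}{2}$ and also the quotient statement, provided we know that the bound above is attained.

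For that sharpness --- the only step that is not pure bookkeeping --- I would exhibit a Lie algebra $\mf{m}$ of dimension exactly $2n+\binom{n}{2}$ onto which $\chi(\mf{g})$ maps surjectively. Take $\mf{m}$ with basis $a_1,\dots,a_n$, $b_1,\dots,b_n$ and $c_{ij}$ for $1\le j<i\le n$, where the $c_{ij}$ span a central ideal and $[a_i,a_j]=[b_i,b_j]=0$, $[a_i,b_i]=0$, $[a_i,b_j]=c_{ij}$, $[a_j,b_i]=-c_{ij}$ for $i>j$. Every triple bracket in $\mf{m}$ is zero, so the Jacobi identity holds automatically and $\mf{m}$ is a $2$-step nilpotent Lie algebra of the stated dimension. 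The maps $x_i\mapsto a_i$ and $x_i^{\psi}\mapsto b_i$ send $\mf{g}$ and $\mf{g}^{\psi}$ to abelian subalgebras, and for $x=\sum_i\lambda_i x_i$ the image of $[x,x^{\psi}]$ is $\sum_{i,j}\lambda_i\lambda_j[a_i,b_j]$, in which the $(i,j)$ and $(j,i)$ contributions cancel; hence the relations of $\chi(\mf{g})$ are respected and we obtain a surjection $\chi(\mf{g})\twoheadrightarrow\mf{m}$. Comparison with the upper bound forces it to be an isomorphism, so $\dim\chi(\mf{g})=2n+\binom{n}{2}$, $\dim W=\dim D=\binom{n}{2}$, and $\chi(\mf{g})/W\simeq\mf{g}\oplus\mf{g}$.

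It remains to prove $R=0$. By Lemma \ref{Lgenalg}, $L$ is spanned as a Lie algebra by the elements $x_i-x_i^{\psi}$, and since $[x_i-x_i^{\psi},x_j-x_j^{\psi}]=-2[x_i,x_j^{\psi}]$ is central, any iterated bracket of length at least $2$ of the $x_i-x_i^{\psi}$ is central. Thus an arbitrary $\ell\in L$ can be written as $\ell=\ell_0+c$ with $c$ central and $\ell_0=\sum_i\mu_i(x_i-x_i^{\psi})$. For any $y\in\mf{g}$ we then get $[\ell,y^{\psi}]=[\ell_0,y^{\psi}]=\sum_i\mu_i\big([x_i,y^{\psi}]-[x_i^{\psi},y^{\psi}]\big)=\sum_i\mu_i[x_i,y^{\psi}]\in D=W$, which is central, so $[x,[\ell,y^{\psi}]]=0$ for all $x\in\mf{g}$. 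Since $R=[\mf{g},L,\mf{g}^{\psi}]$ is generated by such triple brackets, $R=0$. The main obstacle in the whole argument is the construction and verification of $\mf{m}$; everything else reduces to manipulations with the relations already displayed.
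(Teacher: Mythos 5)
Your proof is correct, but the key step is handled quite differently from the paper. The paper gets the lower bound $\dim D\ge\binom{n}{2}$ and the vanishing of $R$ simultaneously by invoking Theorem \ref{T4}: since $D/R\simeq W/R\simeq H_2(\mf{g};K)\simeq\bigwedge^2\mf{g}$ has dimension $\binom{n}{2}$ and $D$ is spanned by the $\binom{n}{2}$ elements $[x_i,x_j^{\psi}]$, those elements must be independent and $R=0$ in one stroke. You instead avoid the Schur-multiplier machinery entirely: you build the explicit $2$-step nilpotent model $\mf{m}$ (with the $c_{ij}$ central and $[a_i,b_j]=-[a_j,b_i]$), verify that the defining relations $[x,x^{\psi}]=0$ are respected by the cancellation of the $(i,j)$ and $(j,i)$ terms, and deduce the dimension count by squeezing between the surjection onto $\mf{m}$ and the spanning-set upper bound; you then prove $R=0$ by a separate direct computation, writing $\ell=\ell_0+c$ with $c$ central and using that $[\ell_0,y^{\psi}]\in D=W$ is central. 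Your route is more self-contained (it does not depend on Theorem \ref{T4}, whose proof occupies the preceding section) and has the added benefit of exhibiting $\chi(\mf{g})$ concretely as a Heisenberg-like nilpotent Lie algebra; the paper's argument is shorter given that Theorem \ref{T4} is already available, and it is the identification $D/R\simeq\bigwedge^2\mf{g}$ that makes the independence of the $[x_i,x_j^{\psi}]$ immediate there. Both arguments rely on the same preliminary observations ($D\subseteq L$ via $[x_i,x_j^{\psi}]=-\tfrac12[x_i-x_i^{\psi},x_j-x_j^{\psi}]$, centrality of these elements, and the description \eqref{Imrho} of $Im(\rho)$).
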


\begin{proof}
 The remarks above the proposition imply that  $D$ is linearly generated by $[x_i,x_j^{\psi}]$, for $i>j$. Each of these elements is clearly in the kernel of
 $\rho$, that is, in $W$, so $D = W$. Now $D/R \simeq W/R \simeq H_2(\mf{g};K) \simeq \bigwedge^2(\mf{g})$, thus $dim(D) \geq \binom{n}{2}$. 
 But then the elements $[x_i,x_j^{\psi}]$ with $1 \leq  j < i \leq n$ must be linearly independent and $R = 0$. Moreover, it is clear by \eqref{Imrho} that 
 $Im(\rho) \simeq \mf{g} \oplus \mf{g}$.
\end{proof}

\subsection{Perfect Lie algebras}
Let $\mf{g}$ be \textit{perfect}, that is, $\mf{g} = \mf{g}'$. Notice that in this case 
$Im(\rho) = \mf{g} \oplus \mf{g} \oplus \mf{g}$. Moreover, $W$ is a central ideal of $\chi(\mf{g})$. In fact, for 
$x,y \in \mf{g} \subseteq \chi(\mf{g})$ and $w \in W$, we have
\[ [[x,y],w] = [[x,w],y] + [x,[y,w]] = [[x,w],y^{\psi}] + [x,[y^{\psi},w]] = [[x,y^{\psi}],w]=0.\]
The first and the third equalities are instances of the Jacobi identity; the second and the fourth are consequences of $[L,D]=0$. Thus $W$ commutes with 
$\mf{g}' = \mf{g} \subseteq \chi(\mf{g})$, and similarly with $\mf{g}^{\psi}$. In this case $R(\mf{g}) = [\mf{g}, [L, \mf{g}^{\psi}]] = 0$. Indeed:
\[ R(\mf{g}) = [\mf{g}, [L, \mf{g}^{\psi}]] =[\mf{g}', [L, \mf{g}^{\psi}]] \subseteq [\mf{g}, [\mf{g}, [L, \mf{g}^{\psi}]]] = 0, \]
since $[\mf{g}, [L, \mf{g}^{\psi}]] \subseteq W$.

We conclude that $\chi(\mf{g})$ is a central (in fact stem) extension
of $H_2(\mf{g}; K)$ by $\mf{g} \oplus \mf{g} \oplus \mf{g}$. In particular, if $\mf{g}$ is \textit{superperfect}, that is, 
$\mf{g}$ is perfect and $H_2(\mf{g};K) = 0$, then $\chi(\mf{g}) \simeq \mf{g} \oplus \mf{g} \oplus \mf{g}$.

\subsection{Lie algebras generated by two elements}
We will show that $R(\mf{f})=0$ if $\mf{f}$ is free of rank $2$.

 \begin{rem}  \label{rem72}
 We will use repeatedly that for any $d \in D$ and $[x_1, \ldots, x_i, \ldots, x_n]$ an arbitrarily 
 arranged bracket of elements $x_i \in \mf{f} \cup \mf{f}^{\psi}$, we have 
 \[[[x_1, \ldots, x_i, \ldots, x_n], d]= [[x_1, \ldots, x_i^{\psi}, \ldots, x_n],d]\]
 for any $i$, as a consequence of $[D,L]=0$. If $x_i$ is already an element of $\mf{f}^{\psi}$, we interpret $\psi$ as an automorphism of order $2$,
 that is, $x_i = y^{\psi} \in \mf{f}^{\psi}$ and $x_i^{\psi} = (y^{\psi})^{\psi}=y \in \mf{f}$.
 \end{rem}

Let $\{x,y\}$ be a free basis of $\mf{f}$ and let $M$ be the set of monomials in these generators.
We want to show that $R(\mf{f}) = [\mf{f},[L,\mf{f}^{\psi}]]=0$. Clearly it is enough to show that 
\begin{equation} \label{eqR}
 \mc{R}(g,\ell,h) := [g,[\ell, h^{\psi}]]=0
\end{equation}
for all $g,h \in \mf{f}$ and $\ell \in L$. By linearity, it suffices to consider $g,h \in M$. We will show that actually it is enough to 
consider indecomposable monomials, that is, $g,h \in \{x,y\}$. For this, it suffices to show that if $g$ or $h$ can be written as a 
non-trivial bracket, then \eqref{eqR} follows from the identities with respect to each of the terms of the bracket.

First, if we have 
\[[[g_1,g_2],[\ell, h^{\psi}]] =  [g_1, [g_2, [\ell,h^{\psi}]]]-[g_2,[g_1,[\ell,h^{\psi}]]].\]
So if $\mc{R}(g_i, \ell, h)=0$ for $i=1,2$, then $\mc{R}([g_1,g_2],\ell, h)=0$ as well.

Similarly, suppose $h=[h_1,h_2]$. By the Jacobi identity we have:
\[ [g,[\ell, [h_1,h_2]^{\psi}]]  = [[\ell,h_1^{\psi}],[g,h_2^{\psi}]] + [[g,[\ell,h_1^{\psi}]],h_2^{\psi}]-[[g, [\ell, h_2^{\psi}]], h_1^{\psi}]- [[\ell,h_2^{\psi}],[g,h_1^{\psi}]]. \]
The first and the fourth terms in the right-hand side of the equation above vanish because $[D,L]=0$. The second and the third terms vanish if
we assume that $\mc{R}(g,\ell,h_1) = \mc{R}(g,\ell,h_2)=0$.

Now we want to do the same with respect to $\ell$. We will show that
is enough to consider the elements $\ell = m-m^{\psi}$, with $m \in \{x,y\}$.

Clearly it is enough to let $\ell$ run through a linear spanning set for $L$. We know that $L$ is generated as an algebra by the elements 
$m-m^{\psi}$ with $m \in M$.
Thus a spanning set for $L$ can be obtained by considering the long brackets involving these elements. 

Now recall that given $m,n,p \in M$, we have:
\[ [m-m^{\psi},[n-n^{\psi},p-p^{\psi}]] = [m,[n,p]]-[m,[n,p]]^{\psi}.\]
This was hinted in Remark \ref{mnu}. From this follows that $L$ is linearly spanned 
by elements of the form $m-m^{\psi}$ and $[m-m^{\psi},n-n^{\psi}]$ with $m,n \in M$. Now:
\[  [g,[[m-m^{\psi},n-n^{\psi}],h^{\psi}]] = [g,[[m-m^{\psi},h^{\psi}],n-n^{\psi}]] +[g , [m-m^{\psi}, [n-n^{\psi},h^{\psi}]]] = (\ast)\]
If $\mc{R}(n,m-m^{\psi},h)=\mc{R}(m,n-n^{\psi},h)=0$, then: 
\[(\ast)  = -[g,[[m-m^{\psi},h^{\psi}],n^{\psi}]] -[g , [m^{\psi}, [n-n^{\psi},h^{\psi}]]]\]
Then by the Jacobi identity, together with $[D,L]=0$, we have: 
\[(\ast)  = -[[g,[m-m^{\psi},h^{\psi}]],n^{\psi}] -[m^{\psi}, [g, [n-n^{\psi},h^{\psi}]]],\]
so $\mc{R}(g,[m-m^{\psi},n-n^{\psi}],h)=0$ if we also assume that $\mc{R}(g,m-m^{\psi},h)=0$ and $\mc{R}(g,n-n^{\psi},h)=0$.

We are down to: if $\mc{R}(g,m-m^{\psi},h)=0$ for all $g,h \in \{x,y\}$ and $m \in M$, then $R(\mf{f})=0$.

 Finally, it is enough to consider $m \in \{x,y\}$. In fact, if $m=[u,v]$, then 
  \[[g,[[u,v],h^{\psi}]]  =  [[g,[u,h^{\psi}]],v] + [[u, h^{\psi}], [g,v]]+ [[g,u],[v,h^{\psi}]] + [u, [g,[v,h^{\psi}]]].
 \]
 To see that $\mc{R}(g,[u,v]-[u,v]^{\psi},h)=0$, we need to show that we can change any instance of $u$ (resp. $v$) for $u^{\psi}$
 (resp. $v^{\psi}$) in the right-hand side of the equation above. For the first term 
 we use that $\mc{R}(g,u-u^{\psi},h)=0$ and then Remark \ref{rem72} (to change $v$ for $v^{\psi}$). The fourth term is analogous, but we 
 use that $\mc{R}(g,v-v^{\psi},h)=0$. For the 
 second and third terms we apply Remark \ref{rem72} twice.

By the arguments above, for $R(\mf{f})=0$, it is enough that $\mc{R}(g,m-m^{\psi},h)=0$ with $g,h,m \in \{x,y\}$.
But this can verified directly, being consequence of the relations $[x,x^{\psi}]=0$, $[y,y^{\psi}]=0$ and $[D,L]=0$. Thus:

\begin{prop}
 If $\mf{g}$ can be generated by two elements, then $R(\mf{g})=0$.
\end{prop}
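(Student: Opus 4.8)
The plan is to reduce to the rank-$2$ free case just treated, using the evident functoriality of $\chi$. Write $\mf{g} = \mf{f}/\mf{n}$, where $\mf{f}$ is free on a two-element basis $\{x,y\}$ and $\mf{n}$ is an ideal. The quotient map $\mf{f}\twoheadrightarrow\mf{g}$ induces a surjective homomorphism $\bar\phi\colon\chi(\mf{f})\twoheadrightarrow\chi(\mf{g})$ sending each generator to the corresponding one, exactly as in the proof of Corollary \ref{chifp}.

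I would then track the ideals $L$, $D$ and $R$ along $\bar\phi$. Since $\bar\phi$ is surjective and takes $z\mapsto\bar z$ and $z^\psi\mapsto\bar z^\psi$ for all $z\in\mf{f}$, it carries the algebra generators $z-z^\psi$ of $L(\mf{f})$ onto those of $L(\mf{g})$, so $\bar\phi(L(\mf{f}))=L(\mf{g})$, and similarly $\bar\phi(D(\mf{f}))=D(\mf{g})$. Hence the triple brackets $[z,[\ell,w^\psi]]$ with $z,w\in\mf{f}$ and $\ell\in L(\mf{f})$, which linearly span $R(\mf{f})$, are sent onto the triple brackets spanning $R(\mf{g})=[\mf{g},L(\mf{g}),\mf{g}^\psi]$; that is, $\bar\phi(R(\mf{f}))=R(\mf{g})$. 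Since $R(\mf{f})=0$ by the discussion above, this forces $R(\mf{g})=0$.

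It is worth noting that the reduction argument establishing $R(\mf{f})=0$ never really uses freeness: each step relies only on the Jacobi identity, the relations $[D,L]=0$ and $[u,v^\psi]=[u^\psi,v]$, the fact that $\mf{g}$ is linearly spanned by iterated brackets in its two generators, and the relations $[x,x^\psi]=[y,y^\psi]=0$, all of which hold in $\chi(\mf{g})$ for an arbitrary two-generated $\mf{g}$. So one could alternatively repeat that argument verbatim with $\mf{f}$ replaced by $\mf{g}$. In either formulation the only genuine content is the bookkeeping in the reductions (already carried out via Remarks \ref{mnu} and \ref{rem72}) together with the final finite check of $\mc{R}(g,m-m^\psi,h)=0$ for $g,h,m\in\{x,y\}$; the functorial version simply makes transparent that nothing new needs to be verified once one observes that $\bar\phi$ maps the defining generators of $R(\mf{f})$ onto those of $R(\mf{g})$. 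The main obstacle, then, is purely that earlier bookkeeping rather than anything in the present step.
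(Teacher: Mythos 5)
Your argument is correct and is essentially the paper's own proof: both reduce to the rank-$2$ free case via the induced surjection $\chi(\mf{f})\twoheadrightarrow\chi(\mf{g})$, which visibly carries $R(\mf{f})$ onto $R(\mf{g})$. The additional remark that the free-case computation could be repeated verbatim for any two-generated $\mf{g}$ is accurate but not needed.
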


\begin{proof}
 We have proved for $\mf{f}$ free of rank two. In general, if $\mf{g}$ is generated by two elements, then there is a surjective homomorphism
 $\varphi: \mf{f} \to \mf{g}$, which induces $\varphi_{\ast}: \chi(\mf{f}) \to \chi(\mf{g})$. It is clear that $\varphi_{\ast} (R(\mf{f})) = R(\mf{g})$,
 so the result follows.
\end{proof}

\begin{rem}
Observe that the proof above does not work for a free Lie algebra of rank greater than $2$, since we can not guarantee the base step: 
$[x^{\psi},[y-y^{\psi},z]]$ will not be trivial if $x$, $y$
and $z$ are three independent generators.
\end{rem}

\subsection{Other small Lie algebras}
Observe that for any finite dimensional Lie algebra $\mf{g}$, the following holds:
\begin{equation} \label{bounddim}
dim (\chi(\mf{g})) \geq 2 dim(\mf{g})+ dim(\mf{g}') + dim (H_2(\mf{g};K)). 
\end{equation}
Indeed, it follows by \eqref{Imrho} that
$dim(Im(\rho))= 2 dim(\mf{g})+ dim(\mf{g}')$. On the other hand, $dim(W) \geq dim(W/R) = dim (H_2(\mf{g};K))$, which gives the bound.
It is clear that \eqref{bounddim} is an equality if and only if $R(\mf{g})$ is trivial. By computing with GAP \cite{GAP4}, we were able to find Lie 
algebras $\mf{g}$ with coefficients in $\mathbb{Q}$ for which that does not happen.
These are:
\begin{enumerate}
 \item Let $\mf{g}$ be the Lie algebra generated by three elements $a$, $b$ and $c$ such that $[a,b]=[b,c]=[a,c]$ is a non-zero central element.
       Then:
       \[ H_2(\mf{g}; \mathbb{Q}) \simeq \mathbb{Q}^4, \hbox{  } dim(\chi(\mf{g}))=14,\hbox{  } dim (R(\mf{g}))= 1;\]
 \item If $\mf{g}$ is the free nilpotent Lie algebra of rank $3$ and class $2$, then: 
 \[ H_2(\mf{g}; \mathbb{Q}) \simeq \mathbb{Q}^8, \hbox{  } dim(\chi(\mf{g}))=27,\hbox{  } dim (R(\mf{g}))= 4.\]
 \end{enumerate}
 
We have the following corollary.

\begin{crlr}
If $\mf{f}$ is a free Lie algebra of rank at least $3$, then $R(\mf{f}) \neq 0$. 
\end{crlr}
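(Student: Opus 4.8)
The plan is to deduce this from the functoriality of $\chi$ together with the explicit computations recorded just above. Let $\mf{g}$ be one of the two Lie algebras of items (1) and (2) — say the $3$-generated nilpotent Lie algebra of item (1), with $[a,b]=[b,c]=[a,c]$ a nonzero central element — for which $R(\mf{g}) \neq 0$. Since $\mf{g}$ is generated by $3$ elements and $\mf{f}$ is free of rank $n \geq 3$, I would fix a free basis $x_1, \ldots, x_n$ of $\mf{f}$ and define a surjective homomorphism $\varphi: \mf{f} \to \mf{g}$ sending $x_1, x_2, x_3$ to a generating set of $\mf{g}$ and $x_4, \ldots, x_n$ to $0$.

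Next I would recall, exactly as in the proof of the proposition on $2$-generated Lie algebras, that $\varphi$ induces a homomorphism $\varphi_{\ast}: \chi(\mf{f}) \to \chi(\mf{g})$ with $\varphi_{\ast}(x) = \varphi(x)$ and $\varphi_{\ast}(x^{\psi}) = \varphi(x)^{\psi}$, and that $\varphi_{\ast}$ is surjective because $\varphi$ is. Since $L(\mf{f})$ is generated as an ideal by the elements $x - x^{\psi}$, whose images generate $L(\mf{g})$, we get $\varphi_{\ast}(L(\mf{f})) = L(\mf{g})$, and hence
\[
\varphi_{\ast}(R(\mf{f})) = \varphi_{\ast}([\mf{f}, L(\mf{f}), \mf{f}^{\psi}]) = [\mf{g}, L(\mf{g}), \mf{g}^{\psi}] = R(\mf{g}).
\]
Therefore $R(\mf{g})$ is a homomorphic image of $R(\mf{f})$, so $R(\mf{g}) \neq 0$ forces $R(\mf{f}) \neq 0$, as claimed.

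The only non-formal ingredient is the assertion $R(\mf{g}) \neq 0$ for that particular $\mf{g}$. Concretely this is the statement that the inequality \eqref{bounddim} is strict for $\mf{g}$: with $\dim \mf{g} = 4$, $\dim \mf{g}' = 1$ and $\dim H_2(\mf{g};\mathbb{Q}) = 4$, the right-hand side of \eqref{bounddim} equals $8+1+4 = 13$, while the GAP computation gives $\dim \chi(\mf{g}) = 14$, so $\dim R(\mf{g}) = 14 - 13 = 1 \neq 0$. I expect this verification — i.e.\ trusting the machine computation of $\dim\chi(\mf{g})$ and of $H_2(\mf{g};\mathbb{Q})$ — to be the only real obstacle; once it is granted, the corollary is immediate from the reduction above, and the same argument applies verbatim using the Lie algebra of item (2), which moreover is itself a quotient of $\mf{f}$.
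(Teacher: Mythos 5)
Your proof is correct and is exactly the argument the paper intends (the paper leaves it implicit after listing the GAP examples): a $3$-generated $\mf{g}$ with $R(\mf{g})\neq 0$ is a quotient of $\mf{f}$, and the induced surjection $\chi(\mf{f})\twoheadrightarrow\chi(\mf{g})$ carries $R(\mf{f})$ onto $R(\mf{g})$, forcing $R(\mf{f})\neq 0$. Your dimension check $14-(2\cdot 4+1+4)=1$ for example (1) also matches the paper's data.
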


\end{section}

\begin{section}{Remarks about the characteristic 2 case}
We will show that the conclusion of Lemma \ref{Lgenalg}, which is essential for the development of this results of this paper, fails almost always in characteristic $2$. 

\begin{prop}
 Suppose that $char(K)=2$. Then the ideal $L$ is generated by $\{x-x^{\psi} \hbox{ }| \hbox{ } x \in \mf{g} \}$ as a subalgebra if and only 
 if $\mf{g}$ is abelian and $\chi(\mf{g}) \simeq \mf{g} \oplus \mf{g}$.
\end{prop}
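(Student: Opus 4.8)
The plan is to pin down exactly where $char(K)=2$ breaks the proof of Lemma~\ref{Lgenalg}. That computation gives, for all $x,y\in\mf{g}$,
\[
  [x-x^\psi,\,y-y^\psi] \;=\; [x,y] - 2[x,y^\psi] + [x^\psi,y^\psi],
\]
using the still-valid identity $[x,y^\psi]=[x^\psi,y]$. In characteristic $\neq 2$ this (after adding $[x,y]-[x,y]^\psi$) yields $2[x,y-y^\psi]$ and hence that $L$ is generated as a subalgebra by $\{x-x^\psi\}$. In characteristic $2$ the middle term vanishes and the identity collapses to $[x-x^\psi,y-y^\psi]=[x,y]+[x^\psi,y^\psi]=[x,y]-[x,y]^\psi$, which already lies in the \emph{subspace} $V:=\{\,x-x^\psi\mid x\in\mf{g}\,\}$. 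So $V$ is closed under bracket, and the subalgebra generated by $\{x-x^\psi\}$ equals $V$; the proposition thus asks precisely when $L=V$. I would also record, using $\beta\colon\chi(\mf{g})\to\mf{g}\oplus\mf{g}$, that $\beta$ maps $V$ isomorphically onto $\{(x,-x)\mid x\in\mf{g}\}$, so that $x\mapsto x-x^\psi$ is injective, $\mf{g}\cap\mf{g}^\psi=0$ inside $\chi(\mf{g})$, and $V\cap D=0$ (where $D=\ker\beta$).

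For the forward implication, assume $L=V$. For all $g,x\in\mf{g}$ we have $[g,x-x^\psi]\in L=V$, hence $\beta([g,x-x^\psi])=([g,x],0)$ must lie in $\beta(V)=\{(z,-z)\mid z\in\mf{g}\}$; this forces $[g,x]=0$, so $\mf{g}$ is abelian. Then $[g,x-x^\psi]=-[g,x^\psi]$, which lies in $V$ by hypothesis and in $D$ by construction, hence in $V\cap D=0$; so every $[g,x^\psi]$ vanishes in $\chi(\mf{g})$ and therefore $D=0$. Since $\mf{g}$ and $\mf{g}^\psi$ are then abelian, this makes $\chi(\mf{g})$ abelian, whence $\chi(\mf{g})=\mf{g}\oplus\mf{g}^\psi\cong\mf{g}\oplus\mf{g}$. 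Conversely, if $\mf{g}$ is abelian and $\chi(\mf{g})\cong\mf{g}\oplus\mf{g}$, then $\chi(\mf{g})$ is abelian, so the subalgebra generated by $\{x-x^\psi\}$ is its span $V$; writing a general element of $\chi(\mf{g})=\mf{g}\oplus\mf{g}^\psi$ as $a+b^\psi$ and using $\alpha(a+b^\psi)=a+b$, we get $L=\ker\alpha=\{a+a^\psi\mid a\in\mf{g}\}=V$, so $L$ is generated as a subalgebra by $\{x-x^\psi\}$.

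The one substantive point is the characteristic-$2$ collapse $[x-x^\psi,y-y^\psi]\in V$; after that the argument is routine bookkeeping with the maps $\alpha$, $\beta$ and the relation $V\cap D=0$, and I expect the only thing to be careful about is that the various subspaces ($\mf{g}$, $\mf{g}^\psi$, $V$, $D$) sit inside $\chi(\mf{g})$ in the expected way, which is exactly what the $\beta$-computation is there to guarantee.
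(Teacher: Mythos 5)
Your proof is correct and follows essentially the same route as the paper's: you observe that in characteristic $2$ the span $V$ of the elements $x-x^{\psi}$ is already closed under the bracket, then use a projection ($\beta$, where the paper uses $\rho$ and then its first component $\sigma$) to show that $L=V$ forces $\mf{g}$ to be abelian and, via the injectivity of the projection on $V$ (your $V\cap D=0$), that $D=0$. The packaging is marginally cleaner in your version, but the mechanism is identical.
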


\begin{proof}
 The ``if'' direction is clear. Denote by $A$ the subalgebra of $\chi(\mf{g})$ generated by the elements $x-x^{\psi}$ and suppose $L = A$.

 First notice that $\rho(A)$ is generated by the elements $\rho(x-x^{\psi}) = (x,0,-x)$ for $x \in \mf{g}$. In characteristic $2$ the set of this elements is closed 
 by the bracket, as well as sum and multiplication by scalar:
 \[[(x,0,-x),(y,0,-y)] = ([x,y],0,[x,y]) = ([x,y],0,-[x,y]).\]
 It follows that $\rho(A) = \{(x,0,x) \hbox{ } | \hbox{ } x \in \mf{g}\}$. Now let $x, y \in \mf{g}$. Then $[x,y-y^{\psi}] \in L = A$. But
 \[\rho([x,y-y^{\psi}]) = [(x,x,0), (y,0,-y)] = ([x,y],0,0) \in \rho(A),\]
 therefore $[x,y]= 0$, that is, $\mf{g}$ is abelian.
 
 Now define $\sigma: \chi(\mf{g}) \to \mf{g}$ by $\sigma(x)=x$ and $\sigma(x^{\psi})=0$. It is clear that $\sigma(A) = \mf{g}$. 
 Notice that $[x,y^{\psi}] = [x^{\psi},y]$ still holds. Then:
 \[[x-x^{\psi},y-y^{\psi}] = [x,y] - [x,y^{\psi}] - [x^{\psi},y] + [x,y]^{\psi} = 2[x,y^{\psi}] = 0.\]  
 The inverse for $\sigma |_A$ is then well defined. Now for any 
 $x,y \in \mf{g}$, we have $[x,y^{\psi}] = [x,y-y^{\psi}] \in L = A$. But $\sigma([x,y^{\psi}]) = 0$, so $[x,y^{\psi}]=0$. Thus $[\mf{g}, \mf{g}^{\psi}]=0$
 in $\chi(\mf{g})$, as we wanted.
\end{proof}
\end{section}

\section*{Acknowledgements}
The author would like to thank D. H. Kochloukova and C. Martínez-Pérez for the guidance of this work. He also thanks the referee for the nice suggestions.
The author is supported by grants 2015/22064-6 and 2016/24778-9, from S\~{a}o Paulo Research Foundation (FAPESP).

\end{document}